\documentclass[11pt,a4paper]{amsart}
\usepackage{latexsym,amssymb}
\usepackage{pgf}

\begin{document}
\title[Strong minimal covers and hyperimmune-free
degrees]{$ \Pi^0_1 $ classes, strong minimal covers and
hyperimmune-free degrees}
\author{Andrew E.M. Lewis}

\def\electronicmail#1{{\em Email:} {\tt #1}\,. \mbox{ }}
\def\alladdress{\noindent
 {\bf Andrew E.M.~Lewis.} Dipartimento di Scienze Matematiche ed
Informatiche, Pian dei Mantellini 44, 53100 Siena, Italy;
\electronicmail{andy@aemlewis.co.uk}}

\thanks{2000MSC: 03D28. The author was supported by Marie-Curie Fellowship MEIF-CT-2005-023657 and was
partially supported by the NSFC Grand International Joint Project no. 60310213, New Directions in the Theory and Applications of Models of Computation. }

\maketitle

\begin{abstract} We investigate issues surrounding an old question of Yates' as to the existence of a
minimal degree with no strong minimal cover, specifically with
respect to the hyperimmune-free degrees.
\end{abstract}

\newtheorem{theo}{Theorem}[section]
\newtheorem{defin}{Definition}[section]
\newtheorem{lem}{Lemma}[section]
\newtheorem{ques}{Question}[section]
\newtheorem{coro}{Corollary}[section]
\newtheorem{prop}{Proposition}[section]
\section{Introduction}

If $A$ and $B$ are sets of natural numbers we write $A\leq_T B$ if
$A$ is Turing reducible to $B$---intuitively, $A$ can be computed if
we are able to compute $B$. The Turing reducibility induces an
equivalence relation on the sets of natural numbers and an order $<$
on the equivalence classes. These equivalence classes are called the
Turing degrees. Intuitively each degree represents an information
content, since all sets in the same degree code the same
information. An old question of Yates' remains one of the
longstanding problems of degree theory:

\begin{defin} We write $ \boldsymbol{0} $ to denote the least Turing degree. For any degree $\boldsymbol{a}$  we write
$\mathcal{D}[<\textbf{a}]$ in order to denote the set of degrees
strictly below $\boldsymbol{a}$. A degree $\boldsymbol{a}$ is
\textbf{minimal} if $\mathcal{D}[<\textbf{a}]=\{ \boldsymbol{0} \}$.
A degree $ \bf{b} $ is a \textbf{strong minimal cover} for $
\textbf{a} $ if $ \mathcal{D}[<\textbf{b}] =\mathcal{D}[\leq
\textbf{a}] $.
\end{defin}

\begin{ques}[Yates] Does every minimal degree have a strong minimal cover?
\end{ques}

It seems fair to say that for a long time very little progress was
made in the attempt to understand issues surrounding Yates' problem.
This situation changed relatively recently, however, with
Ishmukhametov's characterization of the computably enumerable (c.e.)
degrees which have a strong minimal cover. The first ingredient here
was provided Downey, Jockusch and Stob \cite{DJS} in 1996. They
defined a degree $ \bf{a} $ to be \textbf{array nonrecursive} (or
\textbf{array incomputable}) if for each $ f\leq_{wtt} K $ there is
a function $ g $ computable in $ \bf{a} $ such that $ g(n) \geq f(n)
$ for infinitely many $ n $, where $K$ denotes Turing's halting
problem, and $f\leq_{wtt} g$ if $ f\leq_T g $ and there is a
computable bound on the number of arguments of $g $ which are
required on any given input.

\begin{theo}[Downey,Jockusch,Stob \cite{DJS}] Given $ \bf{a} $ which is a.i.c.:
\begin{enumerate}
\item $ \bf{a} $ is not minimal,

\item if $ \bf{c}>\bf{a} $ then there is a degree $ \bf{b}< \bf{c} $ such that $ \bf{a}\vee \bf{b}=\bf{c} $.
\end{enumerate}
\end{theo}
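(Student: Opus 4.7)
The plan is to extract from the a.i.c.\ hypothesis an $A$-computable function $g$ with the strong escape property that $g$ is not dominated by any $f\leq_{wtt} K$. For a single such $f$ the hypothesis directly furnishes a $g_f\leq_T A$ with $g_f(n)\geq f(n)$ infinitely often; a uniform enumeration of the $wtt$-reductions to $K$ combined with a standard diagonalization then yields one $g\leq_T A$ that beats every such $f$ infinitely often. With $g$ in hand, any search procedure whose minimum successful running time is $wtt$-computable from $K$ will, when time-bounded at stage $s$ by $g(s)$, succeed at infinitely many stages.

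For part (1) I would construct $B\leq_T A$ that is $1$-generic and additionally satisfies $\Psi_e^B\neq A$ for every $e$, so that $B$ is noncomputable (by $1$-genericity) and $A\not\leq_T B$. At each stage $s$, with current initial segment $\sigma$, the construction addresses one of the density requirements (search for $\tau\supseteq\sigma$ lying in $W_e$) or one of the diagonalization requirements (search for $\tau\supseteq\sigma$ and a witness $x$ with $\Psi_e^\tau(x)\downarrow\neq A(x)$), in either case capping the search at length $|\sigma|+g(s)$. The function ``minimum length sufficing to find a witness, if any exists'' is $wtt$-reducible to $K$, so the escape property of $g$ guarantees that whenever a witness exists at all, it is located at some such stage.

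For part (2), fix $C>_T A$. I would build $B\leq_T C$ in stages by interleaving two kinds of moves: at a sparse sequence of coding positions, indexed by $A$, write the bits of $C$ directly into $B$, guaranteeing $A\oplus B\geq_T C$; at the remaining positions, carry out the same $g$-permitted diagonalization as in part (1) to meet $\Psi_e^B\neq A$ for every $e$. The result satisfies $B\leq_T C$, $A\oplus B\equiv_T C$, and $A\not\leq_T B$, whence $B<_T C$ (since $B\equiv_T C$ would give $B\geq_T A$).

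The main obstacle is the bookkeeping in part (2): the coding positions must be sparse enough, and located from $A$ alone, so that the diagonalization always has free positions on which to operate; meanwhile the use of each diagonalization search, viewed as a query to $K$, must be computably bounded in $s$ alone (and in particular independent of the $A$-information used to place coding positions) so that the escape property of $g$ genuinely applies. Reconciling these two demands without breaking either the coding equality or the $wtt$-bound on the diagonalization searches is the heart of the argument.
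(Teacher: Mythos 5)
This theorem is quoted from Downey, Jockusch and Stob and the paper does not reprove it, so there is no internal argument to compare against; I assess the sketch on its own. The overall shape---extract an escape function $g\leq_T A$ from the a.i.c.\ hypothesis, permit a noncomputable $B\leq_T A$ with $A\not\leq_T B$ for~(1), and interleave the same permitting with coding of $C$ at $A$-determined positions for~(2)---is the right one. But two steps fail as written. First, a single $g\leq_T A$ that infinitely often exceeds \emph{every} $f\leq_{wtt}K$ is not delivered by the definition: you are handed a separate $g_f$ for each $f$ with no uniformity in $A$, so there is nothing for a ``standard diagonalization'' to combine; and a $g$ escaping the least modulus $m_K$ of $K$ infinitely often does not give value-wise escape of arbitrary $f\leq_{wtt}K$, because the running time of $\Phi^{K\restr u(n)}(n)$ need not be bounded by any computable function of $m_K(u(n))$. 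The robust route is to forgo universality: for each construction write down one specific $f\leq_{wtt}K$ (the settling time of exactly the searches that construction performs) and invoke a.i.c.\ once for that $f$.

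Second, ``least length sufficing to find $\tau\supseteq\sigma$ and $x$ with $\Psi_e^{\tau}(x)\downarrow\neq A(x)$'' is not $\leq_{wtt}K$: it consults $A$, so it is only $\leq_T A\oplus K$ and the escape hypothesis does not apply to it. You need the splitting trick: search for incompatible $\tau_0,\tau_1\supseteq\sigma$ and an $x$ with $\Psi_e^{\tau_0}(x)\downarrow\neq\Psi_e^{\tau_1}(x)\downarrow$, an $A$-free $\Sigma^0_1$ search whose settling time is $\leq_{wtt}K$, and consult $A$ only afterwards to pick the disagreeing side; if no split exists above $\sigma$, then $\Psi_e^B$, where total, is computable and hence unequal to the noncomputable $A$. (For part~(1) alone you can avoid the diagonalization entirely: if $B\leq_T A$ is 1-generic then either $B<_T A$, or $B\equiv_T A$ so the degree of $A$ is 1-generic; in either case it is not minimal.) Finally, the observation you make for~(2)---that the $wtt$ bound at stage $s$ must depend on $s$ alone and not on the $A$-dependent current state---applies equally to~(1): the function to escape is $s\mapsto\max$ of the relevant settling times over all $\sigma$ with $|\sigma|\leq s$ and all $e\leq s$, which is a computably bounded conjunction of $\Sigma^0_1$ queries and hence $\leq_{wtt}K$.
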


\noindent Ishmukhametov then combined this work to great effect with
an analysis of the  \textbf{c.e. traceable} degrees (referred to by
him as  \textbf{weakly recursive}):

\begin{defin} \label{cet}
$ A \subseteq \omega $ is \textbf{c.e. traceable} if there is a
computable function $ p $ such that for every function $ f\leq_T A $
there is a computable function $ h $ such that $ \vline W_{h(n)}
\vline \leq p(n) $ and $ f(n)\in W_{h(n)} $ for all $ n\in \omega $.
\end{defin}

\noindent Having observed that the class of c.e. traceable degrees
(those whose sets are c.e. traceable) is complementary to the class
 of a.i.c. degrees in the c.e. degrees, Ishmukhametov \cite{SI} was then able to show that all c.e. traceable degrees
have a strong minimal cover.

\begin{theo}[Ishmukhametov \cite{SI}] A c.e.  degree has a strong minimal cover iff it is c.e. traceable.
\end{theo}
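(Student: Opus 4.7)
For the forward direction I would argue by contrapositive, using part (2) of the Downey--Jockusch--Stob theorem together with the observation (noted just above) that in the c.e. degrees a.i.c. and c.e. traceable are complementary. Suppose $\mathbf{a}$ is a.i.c. and that $\mathbf{c} > \mathbf{a}$. The DJS theorem supplies $\mathbf{b} < \mathbf{c}$ with $\mathbf{a} \vee \mathbf{b} = \mathbf{c}$; if $\mathbf{b} \leq \mathbf{a}$ then $\mathbf{c} = \mathbf{a}$, a contradiction, so $\mathbf{b} \not\leq \mathbf{a}$, and therefore $\mathcal{D}[<\mathbf{c}] \neq \mathcal{D}[\leq \mathbf{a}]$. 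Hence no c.e. a.i.c. degree has a strong minimal cover, and any c.e. degree that does have one must be c.e. traceable.

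The backward direction is the substantial construction. Given a c.e. traceable c.e. set $A$ with tracing bound $p$, I would build $B$ as the unique path through a nested sequence $T_0 \supseteq T_1 \supseteq \ldots$ of perfect binary trees, uniformly $A$-computable via a strong $A$-computable index $e \mapsto T_e$, along the lines of Sacks' minimal degree construction carried out relative to $A$. Each $T_e$ is chosen to handle the Turing functional $\Phi_e$ in one of two standard ways: either no $\sigma \in T_e$ extending the stem admits an $e$-splitting pair in $T_e$, in which case an $A$-computable tree walk recovers $\Phi_e^B$ and gives $\Phi_e^B \leq_T A$; or $T_e$ is fully $e$-splitting, in which case $B$ can be read off from $\Phi_e^B$ using $T_e$, yielding $B \leq_T \Phi_e^B \oplus A$. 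In parallel I would code bits of $A$ into the construction so that $A <_T B$ and, in the splitting case, $A \leq_T \Phi_e^B$ as well, so that $\Phi_e^B \equiv_T B$. Together these alternatives force every $\mathbf{d}$ with $\mathbf{d} < \deg(B)$ to satisfy $\mathbf{d} \leq \mathbf{a}$, making $\deg(B)$ a strong minimal cover of $\mathbf{a}$.

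The main obstacle, and the step at which c.e. traceability of $A$ is indispensable, is that the subtrees $T_e$ must be genuinely $A$-computable rather than merely $A$-r.e. The natural procedure that searches within $T_{e-1}$ for an $e$-splitting subtree, or else for a witness that no such splitting exists, produces a function $g(n)$ describing level $n$ of $T_e$ only as the limit of a c.e.-in-$A$ approximation, so a priori $T_e$ is $A$-r.e. but not obviously $A$-computable. Since $g \leq_T A$, however, c.e. traceability furnishes a computable $h$ with $g(n) \in W_{h(n)}$ and $|W_{h(n)}| \leq p(n)$. I would then enumerate the boundedly many candidates in $W_{h(n)}$ for the $n$-th level of $T_e$ and use $A$ itself, together with its c.e. enumeration, to select the correct candidate $A$-computably. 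This upgrades each $T_e$ from $A$-r.e. to $A$-computable, and it is precisely this bounded-trace step that cannot be carried out when $\mathbf{a}$ is a.i.c., in perfect agreement with the forward direction.
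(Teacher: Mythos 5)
The paper does not actually prove this theorem; it is cited from Ishmukhametov \cite{SI} and stated without proof. The closest the paper comes to a proof of the backward direction is the framework of Section 5, which reduces the construction of a strong minimal cover for $\deg(A)$ to verifying the property $(\dagger)$: every $A$-computable 2-branching tree $T$ has an $A$-computable 2-branching subtree $T'$ such that $A \leq_T C$ for every $C \in [T']$. So the useful comparison is between your backward direction and that framework.

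Your forward direction is fine: DJS part (2) shows that a.i.c.~degrees satisfy the cupping property, hence have no strong minimal cover, and Ishmukhametov's complementarity observation for c.e.~degrees converts ``not a.i.c.''~into ``c.e.~traceable.'' That part matches the standard argument.

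The backward direction, however, has a genuine gap, and it is located precisely at the step you flag as ``the main obstacle.'' Your paragraph is internally inconsistent: you first say the tree $T_e$ is described by a function $g$ that is ``only the limit of a c.e.-in-$A$ approximation, so a priori $T_e$ is $A$-r.e.~but not obviously $A$-computable,'' and then you invoke c.e.~traceability because ``$g \leq_T A$.'' These cannot both hold. If $g \leq_T A$, then $T_e$ is already $A$-computable and no trace is needed. If $g$ is only the limit of an $A$-c.e.~approximation (so $g \leq_T A'$ but not obviously $g \leq_T A$), then c.e.~traceability, which only traces functions that are Turing reducible to $A$, gives you nothing. Beyond the internal inconsistency, I don't think ``making $T_e$ $A$-computable'' is where the real difficulty lies in any case: each Sacks subtree, once you have decided which case you are in, is $A$-computable by a straightforward search, and what is $A'$-computable is only the case decision. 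What you do correctly identify but then leave entirely unaddressed is the coding step ``so that $A \leq_T \Phi_e^B$ in the splitting case.'' That is exactly the content of $(\dagger)$, and it is there, not in upgrading $A$-r.e.~trees, that c.e.~traceability is essential. The difficulty is that if the 2-branching $T'$ codes $A$ via an $A$-computable choice of which strings sit at each level, a path $C$ through $T'$ cannot by itself recover that $A$-computable choice; one needs to trace the $A$-computable choice function by a computable $h$ with $|W_{h(n)}| \leq p(n)$ and then arrange the coding so that the boundedly many candidates in $W_{h(n)}$ that are consistent with $C$ all decode to the same value of $A(n)$, which in turn requires working inside a tree that is sufficiently bushy relative to the computable bound $p$. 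Without this decoding-from-$C$ step your plan does not establish $A \leq_T C$ for $C \in [T']$, and hence does not deliver a strong minimal cover.
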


\begin{defin} We say a degree $ \bf{a} $ satisfies the
\textbf{cupping property} if for every $ \bf{c}>\bf{a} $ there
exists $ \bf{b}<\bf{c} $ with $ \bf{a}\vee \bf{b}=\bf{c} $.
\end{defin}

It is worth remarking that, as an immediate consequence of these
results, every c.e. degree either has a strong minimal cover or
satisfies the cupping property. It is an open question as to whether
this is true of the Turing degrees in general.

\begin{defin}
Given any $ T \subseteq 2^{<\omega} $  and $ \tau,\tau' \in T $, we
say that $ \tau $ is a \textbf{leaf} of $ T $ if it has no proper
extensions in $ T $. When $ \tau \subset \tau' $ we call $ \tau' $ a
\textbf{successor} of $ \tau $ in $ T $ if there doesn't exist $
\tau'' \in T $ with $ \tau \subset \tau'' \subset \tau' $.
\end{defin}

\begin{defin}Given any $ T \subseteq 2^{<\omega} $  and $ A\subseteq
\omega $ we denote $ A\in [T] $ if there exist infinitely many
initial segments of $ A $ in $ T $.
\end{defin}

The following definition is slightly non-standard, but seems
convenient where the discussion of splitting trees with unbounded
branching is concerned:

\begin{defin} \label{deftree}
We say that $ T \subseteq 2^{<\omega} $  is a \textbf{c.e. tree} if
it has a computable enumeration $ \{ T_s \}_{s\geq 0} $ such that $
\vline T_0 \vline = 1 $ and such that for all $ s\geq 0 $ if $ \tau
\in T_{s+1}-T_s $ then $ \tau $ extends a leaf of $ T_{s} $ (and
such that a finite number of strings are enumerated at any stage).
\end{defin}

\begin{defin}
We say that $ T \subseteq 2^{<\omega} $  has \textbf{bounded
branching} if there exists $ n $ such that every $ \tau\in T $ has
at most $ n $ successors in $ T $.
\end{defin}

\begin{defin}
We say that $ T $ is \textbf{$ \Psi $-splitting} if whenever $
\tau,\tau'\in T $ are incompatible, $ \Psi(\tau) $ and $ \Psi(\tau')
$ are incompatible. We say that $ A $ satisfies the (bounded
branching) \textbf{splitting tree property} if whenever $ A \leq_T
\Psi(A) $, $ A $ lies on a c.e. $ \Psi $-splitting tree (with
bounded branching).
\end{defin}

\begin{defin} For any $ \tau \in 2^{<\omega} $ if $ \vline \tau \vline >0 $
we define $ \tau^{-} $ to be the initial segment of $ \tau $ of
length $ \vline \tau \vline -1 $, and otherwise we define $
\tau^-=\tau$ . Given any Turing functional $ \Psi $ we define $ \hat
\Psi $ as follows. For all $ \tau $ and all $ n $, $ \hat
\Psi(\tau;n)\downarrow =x $ iff the computation $ \Psi(\tau;n) $
converges in $ < \vline \tau \vline $ steps, $ \Psi(\tau;n)=x $ and
$ \hat \Psi(\tau^{-};n')\downarrow $ for all $ n'<n $.
\end{defin}

\begin{defin}
We say that $ \tau \in T $ is of \textbf{level n} in $ T $ if there
exist precisely $ n $ proper initial segments of $ \tau $ in $ T $.
We say that $ T $ is of \textbf{level at least  n} if all leaves of
$ T $ are of level at least $ n $ in $ T $. We say that $ T $ is of
\textbf{level  n} if all leaves of $ T $ are of level $ n $ in $ T
$.
\end{defin}

Unfortunately c.e. traceability does not relate in such a tidy way
where the minimal degrees are concerned. Gabay \cite{YG} has shown
that there are minimal degrees with strong minimal cover and which
are not c.e. traceable. Since the set that Gabay constructs in this
proof satisfies the splitting tree property and since any set
satisfying the bounded branching splitting tree property is c.e.
traceable, it follows that there exist minimal degrees containing a
set which satisfies the splitting tree property which do not contain
a set satisfying the bounded branching
 splitting tree property. In order to see that any set $A$
satisfying the bounded branching splitting tree property is c.e.
traceable, let $ p $ dominate all functions of the form $ g(n)=m^n$.
If $ A $ lies on a c.e. $\hat \Psi$-splitting tree in which each
string has at most $ m $ successors, then $\Psi(A;n) $ is contained
in the set $ \{ \hat \Psi(\tau;n): \tau \ \mbox{is of level}\ n+1\}$
and this set is of size at most $m^n$. It is therefore clear how we
may define $h$ so as to satisfy the definition of c.e. traceability.

\vspace{4pt} It is the aim of this paper to further our
understanding of the issues surrounding Yates' question, in
particular where the hyperimmune-free degrees are concerned.

\begin{defin} $ A\subseteq \omega $ is \textbf{hyperimmune-free}
if for every $ f\leq_T A $ there exists a computable function $ h $
which dominates $ f $ (or equivalently which majorizes $ f $) i.e.
such that $ h(n)\geq f(n) $ for all but finitely many $ n $. In his
new book Soare will introduce  the terminology $ \bf{0}
$-\textbf{dominated} in place of hyperimmune-free. Preferring this
terminology, we shall adopt it in what follows.
\end{defin}

\begin{defin} \label{defper} We say non-empty  $ T \subseteq 2^{<\omega} $ is \textbf{perfect} if
 each $ \tau \in T $ has at least two  successors in $ T $.
\end{defin}

In some ways the  $ \bf{0} $-dominated degrees and the minimal
degrees may be regarded as quite intimately related---the standard
constructions, at least, are very similar and provide many of the
same restrictions. The most basic form of minimal degree
construction produces a set which satisfies the perfect splitting
tree property and it may be observed that every such set is, in
fact, of $ \bf{0} $-dominated degree. In order to see this we can
argue as follows. Given  $ f $ computable in $ A $ and which is
incomputable, let $ \Psi(A)=f $. If $ A $ lies on a perfect c.e. $
\hat \Psi $-splitting tree then for every $ n $, $ f(n) $ is
included in the values $ \hat \Psi(\tau;n) $ for those $ \tau $ of
level $ n+1 $ in this tree.

\vspace{4pt}
 In what follows all notations and terminologies will either be standard or
explicitly defined. For an introduction to the techniques of minimal
degree construction we refer the reader to any one of \cite{RS},
\cite{BC2}, \cite{ML}, \cite{PO1} (this paper requires knowledge
only of Spector forcing).

\vspace{4pt} The author would like to thank Barry Cooper, Richard
Shore, Frank Stephan, Bj\o rn Kjos-Hanssen, Carl Jockusch, Andrea
Sorbi, Jan Reimann and Theodore Slaman for helpful discussions.

\section{Bushy trees}
Since the splitting tree technique is very much the standard
approach to minimal degree construction it is natural to ask the
generality of this method. Given that it seems, in the very least,
to be difficult to construct a minimal degree with no strong minimal
cover using the splitting tree technique it is an obvious question
to ask whether we should be looking for alternative methods of
minimal degree construction. The simple  observation of this section
is that the sets of minimal degree can, in fact, be completely
characterized in terms of \textbf{weakly splitting trees}, or in
terms of c.e. trees with \textbf{delayed splitting}. These kinds of
splitting trees, then, provide a perfectly general method of minimal
degree construction. While it remains to be shown that their use
will really enable one to do anything that the use of standard
splitting trees will not, we shall argue later that the use of these
kinds of trees  may be necessary in constructing a negative solution
to Yates' question---or, at least, in constructing such a degree
which is also $ \bf{0} $-dominated. The result corresponding
 to theorem \ref{wst} for the sets of degree below $ \bf{0}' $
 appeared in Odifreddi \cite{PO2}, who directly transposed
 techniques previously described
 by Chong \cite{CC} while working in $ \alpha $-recursion theory.

\begin{defin}
We say that $ T \subseteq 2^{<\omega} $ is a \textbf{weakly $ \Psi
$-splitting tree} if it is a c.e. subset of $ 2^{<\omega } $ and
there exist partial computable $ \phi,\psi:2^{<\omega}\rightarrow
\omega $ such that:

\begin{enumerate}
\item if $ \tau \in T $ then $\phi(\tau)\downarrow < \vline \tau \vline $ and
$ \psi(\tau)\downarrow < \vline \Psi(\tau) \vline $,

\item whenever $ \tau,\tau' \in  T $ are incompatible below min$ \{ \phi(\tau),\phi(\tau') \} $
they $ \Psi $-split below min$ \{ \psi(\tau),\psi(\tau') \} $,

\item if $ A\in [ T ]$ then the set $ \{ \phi(\tau): \tau \subset A, \tau \in T \} $ has no finite upper bound.
\end{enumerate}
\end{defin}

\begin{lem} \label{lowst}
For any $ A \subseteq \omega $ and any Turing functional $ \Psi $,
if $ \Psi(A) $ is total and $ A \leq_T \Psi(A) $ then $ A $ lies on
a weakly $ \Psi $-splitting tree.
\end{lem}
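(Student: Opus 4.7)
The plan is to exploit the assumed self-reduction $A=\Gamma(\Psi(A))$ (fix any Turing functional $\Gamma$ witnessing it) by putting into $T$ only strings $\tau$ for which a partial back-computation $\Gamma(\Psi(\tau))$ has had time to confirm an initial segment of $\tau$. Concretely, for each $\tau$ run, at stage $|\tau|$, the computations $\sigma:=\Psi_{|\tau|}(\tau)$ and then $\Gamma_{|\tau|}(\sigma)$; if $\Gamma_{|\tau|}(\sigma)$ converges on a nonempty initial segment that agrees with an initial segment of $\tau$, set $\phi(\tau)$ to be (one less than) the length of that verified initial segment and $\psi(\tau)$ to be the largest oracle position queried by $\Gamma$ during these computations. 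Insisting that the oracle use lie strictly inside the converged part of $\sigma$ (and refusing to define $\phi,\psi$ otherwise) takes care of the off-by-ones needed for condition (1).

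For condition (2), suppose $\tau,\tau'\in T$ are incompatible below $\min\{\phi(\tau),\phi(\tau')\}$, so they disagree at some position $i<\phi(\tau),\phi(\tau')$. By construction, $\Gamma(\Psi(\tau);i)=\tau(i)\neq\tau'(i)=\Gamma(\Psi(\tau');i)$, and since $\Gamma$ is deterministic on a given oracle, $\Psi(\tau)$ and $\Psi(\tau')$ must already differ at some position queried by $\Gamma$ while producing the $i$-th bit. By choice of $\psi$, that position lies strictly below both $\psi(\tau)$ and $\psi(\tau')$, yielding the required $\Psi$-split below $\min\{\psi(\tau),\psi(\tau')\}$.

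The delicate requirement is condition (3): a naive $T=\{\tau:\phi(\tau)\downarrow\}$ could admit rogue infinite paths along which $\phi$ stays bounded. I therefore refine $T$ by also demanding that $\phi(\tau)>\phi(\tau')$ for every proper initial segment $\tau'\subset\tau$ at which $\phi$ is defined. Because $\phi(\tau)$ is decided already at stage $|\tau|$, this extra check is computable, so $T$ remains c.e., and by construction $\phi$ is strictly increasing along every chain in $T$. Condition (3) is then automatic.

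Finally, to see that $A\in[T]$, use that $\Psi(A)$ is total and $A=\Gamma(\Psi(A))$: for every $k$ there is $n_k$ such that whenever $n\geq n_k$ the computation $\Gamma_n(\Psi_n(A\restr n))$ converges to an extension of $A\restr k$. Hence $\phi(A\restr n)$ is defined for all sufficiently large $n$ and tends to infinity, so the sequence $\{\phi(A\restr n)\}$ attains a new maximum infinitely often; at each such new maximum $A\restr n$ is enumerated into $T$, giving infinitely many initial segments of $A$ in $T$. The main technical obstacle is exactly condition (3); the rest of the argument is bookkeeping around the definitions of $\phi$ and $\psi$ and the self-reducibility of $A$ from $\Psi(A)$.
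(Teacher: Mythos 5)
Your proposal is correct and takes essentially the same approach as the paper: fix a Turing functional witnessing $A\leq_T\Psi(A)$, run time-bounded back-computations $\hat\Gamma(\hat\Psi(\tau))$, enumerate $\tau$ into $T$ when a verified initial segment of $\tau$ appears, and set $\phi(\tau),\psi(\tau)$ to record the verified length and the oracle use. The only (cosmetic) divergence is in how condition (3) is enforced: the paper enumerates $\tau$ with $\phi(\tau)=n$ only when $\tau$ is the \emph{shortest} string whose back-computation agrees on arguments $\leq n$, whereas you admit $\tau$ only when $\phi(\tau)$ strictly exceeds $\phi(\tau')$ for every proper prefix $\tau'$ with $\phi$ defined; both devices make $\phi$ strictly increasing along chains in $T$ and are interchangeable.
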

\begin{proof}
Suppose $ \Phi(\Psi(A))=A $. In order to enumerate $ T $ run through
all computations of the form $ \hat \Phi(\hat \Psi(\tau)) $ for all
$ \tau \in 2^{<\omega} $, one string at a time and in order of
length. Whenever we find $ \tau,n $ such that $ \hat \Phi(\hat
\Psi(\tau)) $ is defined and agrees with $ \tau $ on all arguments
 $ \leq n $ and such that this is not the case for any $ \tau' \subset \tau $
enumerate $ \tau $ into $ T $ and define $ \phi(\tau)=n $ and $
\psi(\tau)=m $, where $ m $ is the maximum $ x $ such that $ \hat
\Psi(\tau;x) $ is used in computations $  \hat \Phi(\hat
\Psi(\tau);n') $ for $ n'\leq n $.
\end{proof}

\begin{lem}
If $ A $ lies on $ T $ which is a weakly $ \Psi $-splitting tree and
$ \Psi(A) $ is total then $ A\leq_T \Psi(A) $.
\end{lem}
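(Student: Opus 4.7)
The plan is to describe a $\Psi(A)$-computable procedure that, on input $n$, returns $A \restr n$. Since $T$ is c.e. and $\phi,\psi$ are partial computable and defined on every element of $T$, one can uniformly enumerate triples $(\tau,\phi(\tau),\psi(\tau))$ for $\tau \in T$. The procedure searches this enumeration, using $\Psi(A)$ as oracle, for some $\tau \in T$ with $\phi(\tau) \geq n$ such that $\Psi(\tau) \restr \psi(\tau) = \Psi(A) \restr \psi(\tau)$; when such a $\tau$ is found, output $\tau \restr n$.

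First I would verify that this search always terminates. Since $A \in [T]$, property (3) guarantees strings $\tau^* \subset A$ in $T$ with $\phi(\tau^*)$ arbitrarily large, so in particular there exists one with $\phi(\tau^*) \geq n$. For any such $\tau^*$ we have $\Psi(\tau^*) \subset \Psi(A)$, so in particular $\Psi(\tau^*) \restr \psi(\tau^*) = \Psi(A) \restr \psi(\tau^*)$ and the oracle compatibility test passes. Hence a witness is eventually enumerated.

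Next I would show that the $\tau$ returned necessarily satisfies $\tau \restr n = A \restr n$. Using property (3) again, pick $\tau^* \subset A$ in $T$ with $\phi(\tau^*) \geq \phi(\tau) \geq n$. Either $\tau$ and $\tau^*$ are compatible below $\min\{\phi(\tau),\phi(\tau^*)\} = \phi(\tau)$, in which case $\tau \restr n = \tau^* \restr n = A \restr n$; or they are incompatible below that bound, in which case property (2) forces $\Psi(\tau)$ and $\Psi(\tau^*)$ to disagree at some position strictly below $\min\{\psi(\tau),\psi(\tau^*)\}$. But this second alternative is impossible: $\Psi(\tau^*) \subset \Psi(A)$ and the search guaranteed $\Psi(\tau) \restr \psi(\tau) = \Psi(A) \restr \psi(\tau)$, so $\Psi(\tau)$ and $\Psi(\tau^*)$ in fact agree on every input less than $\min\{\psi(\tau),\psi(\tau^*)\}$.

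The essential content is just the case analysis in the second step; the procedure itself is routine. The only mild subtlety is making sure the three defining conditions of a weakly $\Psi$-splitting tree are used in the right places: property (1) ensures $\phi,\psi$ converge on members of $T$ so that the oracle test is well-defined, property (3) supplies witnesses $\tau^*$ inside $A$ with arbitrarily large $\phi$-values, and property (2) is what upgrades the $\Psi$-compatibility test into a genuine uniqueness statement for initial segments of $A$.
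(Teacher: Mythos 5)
Your proof is correct and follows the same approach as the paper's: enumerate $T$ with oracle $\Psi(A)$ until you find $\tau$ with $\phi(\tau)$ large enough whose $\Psi$-image agrees with $\Psi(A)$ up to $\psi(\tau)$, then output the relevant initial segment of $\tau$. The paper simply omits the verification (via property (2) and the compatible/incompatible dichotomy) that you spell out, and uses a slightly different offset ($\phi(\tau)\geq n-1$ rather than $\geq n$) reflecting an inclusive rather than strict reading of ``compatible below''; both readings give a valid argument.
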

\begin{proof}
Suppose we are given an oracle for $ \Psi(A) $. In order to compute
the initial segment of $ A $ of length $ n $
 proceed as follows. Enumerate $ T $ until we find
$ \tau \in T $ such that $ \Psi(\tau) $ agrees with $ \Psi(A) $ on
all arguments $ \leq \psi(\tau) $, and such that $ \phi(\tau) \geq
n-1 $. The initial segment of $ \tau $ of length $ n $ is an initial
segment of $ A $.
\end{proof}

We therefore have:

\begin{theo} \label{wst}
$ A $ is of minimal degree iff $ A $ is incomputable and,
 whenever $ \Psi(A) $ is total and incomputable, $ A $ lies on a weakly $ \Psi $-splitting tree.
\end{theo}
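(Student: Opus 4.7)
The plan is to deduce Theorem \ref{wst} almost immediately from the two preceding lemmas by combining them with the definition of a minimal degree. The two lemmas together yield the equivalence: whenever $\Psi(A)$ is total, $A\leq_T\Psi(A)$ iff $A$ lies on a weakly $\Psi$-splitting tree. So my task reduces to translating the standard characterization of minimality, namely that $A$ is of minimal degree iff $A$ is incomputable and every $B\leq_T A$ is either computable or computes $A$, into this tree-theoretic language.

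For the forward direction, I would assume $A$ is of minimal degree. Incomputability is immediate. Now suppose $\Psi(A)$ is total and incomputable; then $\Psi(A)$ is a representative of some degree $\bf{b}$ with $\bf{b}\leq\deg(A)$ and $\bf{b}\neq\bf{0}$. By minimality of $\deg(A)$ we have $A\leq_T\Psi(A)$, and Lemma \ref{lowst} then places $A$ on a weakly $\Psi$-splitting tree.

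For the reverse direction, suppose $A$ is incomputable and has the stated property. Given any $B\leq_T A$ with $B$ incomputable, pick a Turing functional $\Psi$ with $\Psi(A)=B$; since $B$ is (a characteristic function that is) total, $\Psi(A)$ is total and incomputable. By hypothesis $A$ lies on a weakly $\Psi$-splitting tree, and so by Lemma 2.2 we get $A\leq_T\Psi(A)=B$. Thus every $B\leq_T A$ is either computable or of the same degree as $A$, which is precisely minimality.

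There is essentially no obstacle here; the content of the theorem is already absorbed into the two lemmas, and the only thing to watch is the trivial totality observation in the reverse direction (that we may freely view the reduction $B\leq_T A$ as a total functional $\Psi$ with $\Psi(A)=B$). I would therefore keep the write-up to a few lines, explicitly invoking the two lemmas and the definition of minimality without further calculation.
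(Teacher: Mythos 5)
Your proof is correct and is essentially the paper's own argument: the paper simply writes ``We therefore have:'' and states the theorem after the two lemmas, treating exactly the translation you spell out (minimality of $\deg(A)$ expressed via functionals $\Psi$ with $\Psi(A)$ total and incomputable) as immediate. You have just made that implicit step explicit.
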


Theorem \ref{delay} below gives a version of theorem \ref{wst} in
more familiar form. In order to get a perfectly general method of
minimal degree construction all we need do is delay splitting by one
level:

\begin{defin} We say that $ T \subseteq 2^{<\omega} $ is a delayed $
\Psi $-splitting c.e. tree if $ T $ is a c.e. tree and whenever $
\tau_0, \tau_1 \in T $ are incompatible (denoted $ \tau_0 \vline
\tau_1$) any $ \tau_2,\tau_3 \in T $ properly extending $ \tau_0 $
and $ \tau_1 $ respectively are a $  \Psi $-splitting.
\end{defin}

\begin{theo} \label{delay}
$ A $ is of minimal degree iff $ A $ is incomputable and,
 whenever $ \Psi(A) $ is total and incomputable, $ A $ lies on a delayed $ \Psi $-splitting c.e. tree.
\end{theo}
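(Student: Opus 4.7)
The plan is to deduce theorem \ref{delay} from theorem \ref{wst} by showing that, for the purposes of characterizing minimal degrees, lying on a delayed $\Psi$-splitting c.e. tree is interchangeable with lying on a weakly $\Psi$-splitting tree. This splits into two claims: (i) if $A$ lies on a delayed $\Psi$-splitting c.e. tree $T'$ and $\Psi(A)$ is total then $A\leq_T\Psi(A)$; and (ii) any weakly $\Psi$-splitting tree containing $A$ can be refined into a delayed $\Psi$-splitting c.e. tree containing $A$.

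For (i), I would compute $A\restr n$ from oracle $\Psi(A)$ by enumerating $T'$ and searching for a pair $\tau_0\subsetneq\tau_1$ in $T'$ with $|\tau_0|\geq n$ and with $\hat\Psi(\tau_1)$ matching $\Psi(A)$ on a sufficiently long initial segment; then output the length-$n$ prefix of $\tau_0$. Along $A$ such pairs exist because $A\in[T']$. For correctness, if $\tau_0$ were incompatible with $A$ at some level $k<|\tau_0|$, pick any initial segment $\tau_A\subset A$ in $T'$ with $|\tau_A|>k$ and an extension $\tau_A'\supsetneq\tau_A$ in $T'$ with $\tau_A'\subset A$ long enough that $\hat\Psi(\tau_A')$ matches $\Psi(A)$ on the relevant initial segment. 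Since $\tau_0$ and $\tau_A$ are incompatible elements of $T'$ with proper $T'$-extensions $\tau_1$ and $\tau_A'$, delayed splitting forces $\Psi(\tau_1)$ incompatible with $\Psi(\tau_A')$; but both agree with $\Psi(A)$ on the matched segment, a contradiction.

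For (ii), let $T$ be the weakly $\Psi$-splitting tree from lemma \ref{lowst} with associated partial computable $\phi$, and define $T'$ recursively: the root of $T'$ is the root of $T$, and the $T'$-children of $\sigma\in T'$ are the $\tau\in T$ such that $\tau\supsetneq\sigma$, $\phi(\tau)\geq|\sigma|$, and no $\rho\in T$ satisfies $\sigma\subsetneq\rho\subsetneq\tau$ and $\phi(\rho)\geq|\sigma|$. Property~(3) of weakly splitting trees says $\{\phi(\rho):\rho\subset A,\ \rho\in T\}$ is unbounded, so for each $\sigma\in T'$ with $\sigma\subset A$ there is some $\tau\in T$ with $\sigma\subsetneq\tau\subset A$ and $\phi(\tau)\geq|\sigma|$; the shortest such $\tau$ is then a $T'$-child of $\sigma$, since any intermediate $T$-string with $\phi$-value at least $|\sigma|$ would itself lie on $A$ and contradict minimality. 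Hence $A\in[T']$.

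The main technical step is verifying the delayed $\Psi$-splitting property of $T'$. Let $\tau_0,\tau_1\in T'$ be incompatible with proper $T'$-extensions $\tau_2,\tau_3\in T'$. Let $\sigma$ be the longest $T'$-ancestor common to $\tau_0$ and $\tau_1$ and let $\sigma_0,\sigma_1$ be the $T'$-children of $\sigma$ with $\sigma_0\subseteq\tau_0$, $\sigma_1\subseteq\tau_1$. By maximality of $\sigma$, $\sigma_0$ and $\sigma_1$ are distinct, hence incompatible at some level $k<\min\{|\sigma_0|,|\sigma_1|\}$. The $T'$-parent $\rho$ of $\tau_2$ satisfies $\sigma_0\subseteq\rho\subsetneq\tau_2$, so $\phi(\tau_2)\geq|\rho|\geq|\sigma_0|>k$; analogously $\phi(\tau_3)>k$. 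The weak $\Psi$-splitting property of $T$ then yields that $\tau_2$ and $\tau_3$ are $\Psi$-splitting, as required. A separate, minor point is that $T'$ must be enumerated in conformity with definition \ref{deftree}, which requires some care since $\sigma$ may accumulate further $T'$-children over time as $T$ is enumerated; this is handled by standard bookkeeping in the construction.
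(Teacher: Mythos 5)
Your decomposition and the verification of the delayed-splitting property for the tree $T'$ you build from $T$ are sound and close in spirit to the paper's argument, and (i) is essentially the right-to-left direction (once one reduces to $\hat\Psi$, as the paper notes parenthetically, so that ``sufficiently long'' becomes the precise condition that the finite string $\hat\Psi(\tau_1)$ is an initial segment of $\Psi(A)$). However, the sentence dismissing the task of enumerating $T'$ in conformity with definition \ref{deftree} as ``a minor point \ldots handled by standard bookkeeping'' is exactly where the real difficulty lies, and your argument has a genuine gap there. By that definition, once a node $\sigma$ of the tree being built ceases to be a leaf --- once some successor of $\sigma$ has itself been extended --- no further successors of $\sigma$ may ever be added. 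But as $T$ is enumerated, $\sigma$ may keep accumulating new $T'$-children (possibly infinitely many), and there is no computable rule, from inside the construction, for knowing when the successor set of $\sigma$ is complete: cutting off too soon may discard the branch along $A$, while never cutting off means $T'$ is simply not a c.e.\ tree. Tellingly, your proof of (ii) never actually invokes the minimality of the degree of $A$ --- the word ``minimality'' in it refers only to choosing $\tau$ of least length --- and without that hypothesis the claim (ii) is not a true combinatorial statement.

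The paper's proof resolves precisely this issue and uses the minimality of the degree of $A$ in an essential way. It considers a functional $\Phi$ which attempts to send every path of $T'$ to a $1$-generic by coding which successor is taken at each level, observes that since $A$ is of minimal degree it cannot bound a $1$-generic, and hence obtains a least $i$ for which the $i$-th genericity requirement fails along $A$ in the strong sense: $\Psi_i(\Phi(\tau);i)\uparrow$ for every $\tau\subset A$ while some $\sigma\supset\Phi(\tau)$ has $\Psi_i(\sigma;i)\downarrow$. The eventual appearance of such a $\sigma$ gives a computable stopping rule --- commit to the successors of $\tau$ found so far and never add more --- and because this witness appears for every $\tau\subset A$ while $A$'s own branch is never pruned away, the resulting pruned tree $T''$ is a c.e.\ tree with $A\in[T'']$. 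You would need an argument of this kind, or some other genuine use of the minimality of $A$, to close the gap in (ii).
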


\begin{proof} (Sketch) The direction from right to left is easy (note that, if
 whenever $ \Psi(A) $ is total and incomputable $ A $ lies on a delayed $ \Psi $-splitting c.e. tree, then
 $ A $ also lies on a delayed $ \hat \Psi $-splitting c.e. tree) so
we are left to prove the direction from left to right. So suppose
given $ A \subseteq \omega $ such that  $ \Psi(A) $ is total and $ A
\leq_T \Psi(A) $. Let $ T $ be the weakly splitting tree as defined
in the proof of lemma \ref{lowst}, but with also the empty string as
the single member of level 0. First we define $ T' $ as follows. The
strings of level $ n\leq 1 $ in $ T' $ are the strings of level $
n\leq 1 $ in $ T $. Suppose we have already defined the strings of
level $ n\geq 1 $ in $ T' $. For each $ \tau \in T' $ of level $ n $
in $ T' $ the strings extending $ \tau $ in $ T' $ of level $ n+1 $
in $ T' $ are those strings extending $ \tau $ in $ T $ and which
are of level $ \vline \tau \vline  $ in $ T $. It remains to show
that we can take $ T'' $ such that for any $ n $ the strings of
level $ n $ in $ T'' $ are strings of level $ n $ in $ T' $, such
that $ A\in [T''] $ and which is a c.e. tree. In order to see this
consider, during the enumeration of $ T' $,  the enumeration of
axioms for $ \Phi $ which attempts to map every set lying on $ T' $
to a 1-generic. Since $ A $ is if minimal degree and therefore does
not bound a 1-generic there will exist a least $ i $ such that for
all $ \tau \subset A $, $ \Psi_i(\Phi(\tau);i)\uparrow $ and there
exists $ \sigma \supset \Phi(\tau) $ with $
\Psi_i(\sigma;i)\downarrow $. So long as the axioms for $ \Phi $ are
enumerated in a reasonably intelligent way we may then define $ T''
$ by enumerating strings of level $ n+1 $ (for sufficiently large $
n $) into this tree extending $ \tau $ of level $ n $ only when we
find $ \sigma \supset \Phi(\tau) $ with $ \Psi_i(\sigma;i)\downarrow
$, and by insisting that no successors of $ \tau $ should be
enumerated at any subsequent stage.
\end{proof}

\section{$ \Pi^0_1 $ classes and the cupping property}

\begin{defin} For any $ T \subseteq \omega^{<\omega} $ we define $
D(T)$ (the \textbf{downwards closure} of $ T $)  to be the set of
all $ \tau $ for which there exists $ \tau' \supseteq \tau $ in $
T$. We say that $ T\subseteq \omega^{<\omega} $ is \textbf{downwards
closed} if $ D(T)=T $.
\end{defin}

\begin{defin} Downwards closed computable $ T \subseteq \omega^{<\omega} $
is said to be \textbf{highly computable} if there exists a partial
computable function $ f $ such that, for any $ \tau \in T $, $ \tau
$ has at most $ f(\tau) $ successors in $ T $. A subset $ X $ of $
\omega^{\omega} $ is a $ \Pi_{1}^{0} $ \textbf{class} if $ X=[T] $
for some computable downwards closed $ T $, and if $ T $ is highly
computable then $ X $ is said to be a \textbf{computably bounded
(c.b.)} $ \Pi_{1}^{0} $ \textbf{class}.
\end{defin}

In what follows, it may be assumed that any $ \Pi^0_1 $ class
referred to is computably bounded.

\begin{theo}[Jockusch and Soare] The $ \bf{0} $-dominated basis theorem: every non-empty  $ \Pi_1^0 $
class contains a member of $ \bf{0} $-dominated degree.
\end{theo}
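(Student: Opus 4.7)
The plan is to construct, by recursion on $e$, a nested sequence of non-empty c.b.\ $\Pi^0_1$ classes $P = P_0 \supseteq P_1 \supseteq \cdots$ such that for each $e$ either (i) some $n$ has $\Phi_e^A(n)\uparrow$ for every $A \in P_{e+1}$, or (ii) $\Phi_e^A$ is total on $P_{e+1}$ and uniformly dominated by some computable function $g_e$. Since $P$ is compact (as a c.b.\ $\Pi^0_1$ class) and the $P_e$ form a nested family of closed subsets of $P$, their intersection is non-empty. Any $A \in \bigcap_e P_e$ will then be of $\mathbf{0}$-dominated degree: every total $f \leq_T A$ has the form $\Phi_e^A$ for some $e$, totality rules out case (i), and we are left with $f$ dominated by $g_e$.

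The heart of the argument is a dichotomy lemma for a fixed non-empty c.b.\ $\Pi^0_1$ class $Q$ (with highly computable defining tree $T_Q$) and a fixed index $e$: either (i) there is some $n$ for which $R_n = \{A \in Q : \Phi_e^A(n)\uparrow\}$ is non-empty, or (ii) every such $R_n$ is empty, in which case $\Phi_e^A$ is total for every $A \in Q$. Note that $R_n$ is itself a c.b.\ $\Pi^0_1$ class, witnessed by the computable downward-closed subtree $\{\sigma \in T_Q : \Phi_{e,|\sigma|}^\sigma(n)\uparrow\}$ of $T_Q$, so in case (i) we may legitimately set $P_{e+1} = R_n$. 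In case (ii) we set $P_{e+1} = Q$ and must exhibit the computable dominating function $g_e$.

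For the latter, fix $n$ and consider the computable downward-closed subtree
$S_n = \{\sigma \in T_Q : \Phi_{e,|\tau|}^\tau(n)\uparrow \text{ for all } \tau \subseteq \sigma\}$
of $T_Q$. Any infinite path through $S_n$ would lie in $Q$ and satisfy $\Phi_e^A(n)\uparrow$, contradicting case (ii); hence $[S_n]$ is empty, and since $S_n$ is downward closed and computably bounded, K\"onig's lemma forces $S_n$ to be finite. One may therefore search computably for the least $\ell$ at which no string of length $\ell$ lies in $S_n$. At this $\ell$, every $\sigma \in T_Q$ of length $\ell$ has some initial segment $\tau$ with $\Phi_{e,|\tau|}^\tau(n)\downarrow$, and by monotonicity $\Phi_{e,\ell}^\sigma(n)\downarrow$; we take $g_e(n)$ to be the maximum of these finitely many values, and this procedure is effective in $n$.

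The main obstacle is producing the computable bound $g_e$ in case (ii), since $T_Q$ may contain dead-end strings on which $\Phi_e(\cdot;n)$ never converges, and one cannot effectively restrict attention to extendible strings. The K\"onig's lemma argument sidesteps this by recasting non-convergence as a downward-closed tree condition that has no infinite paths under the totality hypothesis, so the search for a witnessing level $\ell$ is guaranteed to terminate. With the dichotomy in hand, the recursive construction and the compactness extraction of $A$ are routine.
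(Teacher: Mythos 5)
Your argument is correct, and it is the standard proof of the Jockusch--Soare hyperimmune-free basis theorem. Note that the paper does not give its own proof---the result is simply cited---so there is no internal argument to compare against; your proof is the classical one found, for instance, in Jockusch and Soare's original 1972 paper.

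A few minor points worth making explicit when writing it out. First, the dichotomy need not be decided effectively: it suffices that in either case $P_{e+1}$ is a non-empty c.b.\ $\Pi^0_1$ class, and you correctly verify this (in case (i) it is cut out by the downward-closed computable subtree $\{\sigma \in T_Q : \Phi_{e,|\sigma|}^\sigma(n)\uparrow\}$ of $T_Q$, which inherits the computable bound on branching; in case (ii) it is $Q$ itself). Second, the search for the level $\ell$ in case (ii) genuinely terminates only because $S_n$ is a computably bounded, downward-closed tree with no infinite path, so K\"onig's lemma applies; you identify exactly this. Third, the non-emptiness of $\bigcap_e P_e$ uses the compactness of $P_0$ as a computably bounded class together with the $P_e$ being nested non-empty closed sets---again correctly invoked. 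One small caveat: the theorem as used in this paper is restricted to computably bounded $\Pi^0_1$ classes (as the author explicitly stipulates); your use of compactness and of finite branching in the K\"onig argument relies on exactly this restriction, so it is worth stating that the construction only applies to c.b.\ classes.
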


\begin{defin} A degree $ \bf{a} $ is PA if it is the degree of a
complete extension of Peano Arithmetic. Equivalently, $ \bf{a} $ is
PA if every non-empty $ \Pi^0_1 $ class contains a member of degree
$ \leq \bf{a} $.
\end{defin}

\vspace{4pt} Given the similarity between the standard constructions
of $ \bf{0} $-dominated and minimal degrees, perhaps the strongest
result on the negative side of Yates' question is that there exists
a $ \bf{0} $-dominated degree which satisfies the cupping property
(and so does not have a strong minimal cover). This follows directly
from the theorem of Kucera's that the PA degrees satisfy the cupping
property using the $ \bf{0} $-dominated basis theorem, since there
exists a non-empty $ \Pi^0_1 $ class every member of which is of PA
degree.

\begin{theo}[Kucera \cite{AK}] \label{nsm} The PA degrees satisfy the cupping property.
Equivalently, there exists a non-empty  $ \Pi_1^0 $ class every
member of which is of degree which satisfies the cupping property.
\end{theo}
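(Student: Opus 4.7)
The plan is to prove the first assertion---that every PA degree satisfies the cupping property---and then deduce the equivalent $\Pi^0_1$-class formulation by taking any non-empty $\Pi^0_1$ class of PA-degree sets, for instance the class of complete consistent extensions of Peano arithmetic.

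Fix $A$ of PA degree and $C>_T A$; the task is to construct $B$ with $B<_T C$ and $A\oplus B\geq_T C$. The idea is to code $C$ into a path through a $\Pi^0_1$ class $P\subseteq 2^\omega$ all of whose members are of PA degree---for concreteness, the class $\mathrm{DNC}_2$ of $\{0,1\}$-valued diagonally non-computable functions. Since $P$ has no computable members, $P$ is perfect, so every extendible node of the associated tree $T_P$ has a pair of incompatible extendible extensions. The first step is to exploit the PA hypothesis on $A$ to produce, uniformly in an extendible $\sigma\in T_P$, such a splitting pair $\tau_0(\sigma)\perp\tau_1(\sigma)$: $A$ can simultaneously compute paths through two appropriately chosen $\Pi^0_1$ subclasses of $P\cap[\sigma]$ and read off their first divergence. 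Iterating yields an $A$-computable monotone, incompatibility-preserving embedding $e\colon 2^{<\omega}\to T_P$.

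Now set $B=\bigcup_n e(C\restr n)$. Then $B\leq_T A\oplus C\equiv_T C$, and from $A\oplus B$ we recover $C$ bit by bit: with $e$ in hand (from $A$) and with $B$ itself, we decide $C(n)$ by checking which of $e((C\restr n)\frown 0)$ or $e((C\restr n)\frown 1)$ is a prefix of $B$. Thus $A\oplus B\equiv_T C$.

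The main obstacle is the strict inequality $B<_T C$. Since $B\in P$, $B$ is itself of PA degree, so one cannot argue this by raw computational weakness. I plan to handle it by performing the above construction inside a suitably thinned subclass $P^\ast\subseteq P$. Using a relativized-to-$A$ measure argument, the set of $X\in P$ with $C\leq_T X$ has measure zero in $P$ (since $C$ is incomputable, by a standard Sacks-style null cone estimate), so one can pass to a non-empty $\Pi^0_1(A)$ subclass $P^\ast$ of $P$, still perfect, no member of which computes $C$. Carrying out the construction of $e$ inside $T_{P^\ast}$ instead forces $B\in P^\ast$ and hence $C\not\leq_T B$, completing the argument. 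The delicate ingredient---and the place where the hypothesis $C>_T A$ is genuinely used---is the production of this $A$-relative subclass $P^\ast$; this is the step I expect to require the most care.
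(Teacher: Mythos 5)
Your approach is genuinely different from the paper's. The paper deliberately sidesteps the PA-degree machinery altogether: it constructs a concrete $\Pi^0_1$ class $\Pi$ directly by a bushy-tree colouring argument (Lemmas 3.2--3.5), arranging that for every $A\in[\Pi]$ there is an $A$-computable $2$-branching tree $T^A\subseteq 2^{<\omega}$ such that no $C\in[T^A]$ computes $\Psi(A)$, and then reads off the cupping property from this. The first paragraph of the paper's proof is exactly the reduction you would need at the very end, but the bulk of the work is purely combinatorial, with no appeal to PA degrees, DNC functions, or relativized measure. You instead aim to prove the PA-degree form of the theorem head-on and then deduce the $\Pi^0_1$-class form by taking, say, $\mathrm{DNC}_2$. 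That deduction is fine, and the coding mechanism $B=\bigcup_n e(C\restr n)$ with $A\oplus B\equiv_T C$ and $B\leq_T C$ is the right skeleton (and is essentially Ku\v{c}era's original route, which the paper explicitly says it is offering an alternative to).

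However, the step you yourself flag as delicate --- producing the subclass $P^\ast$ --- is where the argument breaks, and I don't see how to repair it as stated. Two problems. First, the fact that $\{X\in P: C\leq_T X\}$ is null (in whatever measure you place on $P$, e.g.\ the pushforward of Lebesgue measure along $e$) does not produce a $\Pi^0_1(A)$ subclass avoiding it: the complement of an upper Turing cone is $\Pi^0_3(C)$, not closed, and a null set in a perfect closed set need not be disjoint from any nontrivial $\Pi^0_1(A)$ subclass. Nullity of the cone gives no effectivity at all. Second, and more fatally, even if such a $P^\ast$ were handed to you as a non-empty $\Pi^0_1(A)$ class, the embedding construction would no longer go through: $A$ being of PA degree gives you uniform access to members of $\Pi^0_1$ (lightface) classes, not $\Pi^0_1(A)$ classes --- no degree is PA relative to itself (e.g.\ the class of $\{0,1\}$-valued functions DNC relative to $A$ is $\Pi^0_1(A)$ and has no $A$-computable member). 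So your $e^\ast$ into $T_{P^\ast}$ would not be $A$-computable, and you would lose $A\oplus B\geq_T C$. The standard way to secure $B<_T C$ in this line of argument is not by passing to a smaller class in advance but by interleaving a Sacks-style preservation/diagonalization against $\Psi_n(B)=C$ into the coding itself, using the observation that if no diagonalizing extension exists at stage $n$ inside the $A$-computable perfect tree $S\subseteq T_P$, then $A$ (via its PA power) can already compute $C$, contradicting $C>_T A$. As written, your proposal replaces that finite-extension argument with a measure step that does not do the job.
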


We give here a simple proof of theorem \ref{nsm}, the hope being
that this alternative proof (which doesn't require reasoning within
PA like the original) may be more flexibly extended in order to give
stronger results. The point of this proof is not so much that it is
the shortest possible, but rather that it very effectively exposes
the combinatorial arguments which lie at the heart of issues
surrounding the cupping property and the construction of strong
minimal covers. This proof has already provided the intuition behind
the constructions appearing in \cite{AL1} and \cite{AL2} and can be
expected to have further applications.

\begin{defin} We let $ \lambda $ denote the string of length 0.
 $ T \subseteq 2^{<\omega} $ with a single element of level 0
 is  \textbf{2-branching} if every $ \tau\in T $ has two precisely
 two successors in $ T $.
We say that $ T $ is \textbf{2-branching below level  n} if all $
\tau \in T $ which are of level $ n'<n $ in $ T $ have precisely two
successors in $ T $.
\end{defin}

\noindent \emph{Alternative proof of theorem \ref{nsm}}.  We observe
first that if  $ A \subseteq 2^{\omega} $ satisfies the property
that there exists a 2-branching $ T\leq_T A $ such that if $ C\in
[T] $ then $ A\not \leq_T C $, then  $ deg(A) $ satisfies the
cupping property. Given $ B $ of degree strictly above $ A $ we
define $ C<_T B $ such that $ C= \bigcup_n \sigma_n $. Let $
\sigma_0$ be the string of level 0 in $ T $ and for all $ n>0 $ let
$ \sigma_n $ be the right successor of $ \sigma_{n-1} $ in $ T $ if
$ B(n-1)=1 $ and let $ \sigma_n $ be the left successor otherwise.
Then $ B\not \leq_T C $ since $ C $ lies on $ T $  and it is clear
that $ B $ is computable given oracles for $ A $ and $ C $.

\vspace{4pt} In order to construct a non-empty  $ \Pi_1^0 $ class
every member of which is of degree which satisfies the cupping
property, then, it suffices to construct downwards closed computable
$ \Pi \subseteq 2^{<\omega} $ such that $ [\Pi] $ is non-empty and
such that for every $ A\in [\Pi] $ there exists 2-branching $
T^A\leq_T A $ which satisfies the property that if $ C\in [T^A] $
then $ A\not \leq_T C $. In order to ensure that $ A\not \leq_T C $
for any $ C\in [T^A] $, we shall construct $ \Psi $ such that $
\Psi(A) \not \leq_T C $. In particular we shall construct $ \Pi $
and $ \Psi $ so that $ [\Pi] $ is non-empty and so as to satisfy
every requirement:

\[ \mathcal{N}_i: (A\in [\Pi] \wedge C\in [T^A])\rightarrow ( \Psi_i(C;i)\neq
\Psi(A;i)). \]

\vspace{4pt} \noindent  For $ \tau $ in $ \Pi $ we shall define
values $ T^{\tau} $. If $ A $ is an infinite path through $ \Pi $
then $ T^A $ will be defined to be $ \bigcup \{ T^{\tau}:
T^{\tau}\downarrow, \tau \subset A \} $.

\vspace{8pt} So let us consider first how to satisfy a single
requirement $ \mathcal{N}_0 $. We wish to construct $ \Pi $ such
that $ [\Pi] $ is non-empty and if $ A\in [\Pi] $ there exists
2-branching $ T^A\leq_T A $ which satisfies the property that if $
C\in [T^A] $ then $ \Psi_0(C;0)\neq \Psi(A;0) $. The most primitive
intuition here is as follows; if we are given four strings and we
colour these strings with two colours then there exists some colour
such that at least two strings are not that colour (okay so actually
we only need three strings, but it convenient here to do everything
in powers of two).

Now we extend this idea. Let $ T $ be the set of finite binary
strings which are of even length, the important point here being
that $ T $ is 4-branching. We let $ T(n) $ denote the strings in $ T
$ of level $ n $ in $ T $.

\begin{defin} For any finite $ T'\subseteq 2^{<\omega} $ and any $ m $, an $
m $-colouring of $ T' $ is an assignment of some col$(\sigma)<m $ to
each leaf $ \sigma $ of $ T' $.
\end{defin}

\begin{defin} Given any $ f:\omega\rightarrow \omega $
 we say that non-empty $ T' $ is $ (T,f) $ \textbf{compatible} if
  for every $ n $ the strings of level $ n $ in $ T' $ are
 strings of level $ n $ in $ T $ and any string of level $ n $ in $
 T' $ which is not a leaf of $ T' $ has $ f(n) $ successors in $
 T' $.
\end{defin}

Let $ \kappa $ be the constant function such that for all $ n $, $
\kappa(n)=2 $. We say that $ T' $ is $ (T,2) $ compatible if it is $
(T,\kappa) $ compatible. The following lemma is just what we need in
order to be able to satisfy the single requirement $ \mathcal{N}_0
$.

\begin{lem} \label{twocol}
For every $ n $ and every 2-colouring of $ T(n) $ there exists $ d<2
$ and $ (T,2) $ compatible $ T' $  of level $ n $ such that no leaf
$ \sigma $ of $ T' $ has col$(\sigma)= d $.
\end{lem}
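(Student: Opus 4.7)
The plan is to proceed by induction on $n$, leveraging the fact that $T$ is 4-branching while only two colours are available. For the base case $n=0$ the tree $T(0) = \{\lambda\}$ consists of a single string; if the given 2-colouring assigns $\lambda$ the value $c$, take $T' = \{\lambda\}$ (trivially $(T,2)$-compatible of level $0$) and $d = 1 - c$.

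For the inductive step, assume the claim at level $n$ and consider a 2-colouring of $T(n+1)$. Each of the four $\sigma \in T(1)$ is the root of the subtree $T_\sigma := \{\tau \in T : \sigma \subseteq \tau\}$, which under a length-shift is isomorphic to $T$; the given colouring restricts to a 2-colouring of the level-$n$ stratum $T_\sigma(n) \subseteq T(n+1)$. By the inductive hypothesis applied inside each of these four copies, for every $\sigma \in T(1)$ there exist a colour $d_\sigma \in \{0,1\}$ and a $(T,2)$-compatible subtree $T'_\sigma \subseteq T_\sigma$, rooted at $\sigma$ and of level $n$, every leaf of which avoids colour $d_\sigma$. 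Now apply the pigeonhole principle to $\sigma \mapsto d_\sigma$: four inputs with two possible values force some colour $d$ to be attained twice, say at distinct $\sigma_0, \sigma_1 \in T(1)$. Set $T' := \{\lambda\} \cup T'_{\sigma_0} \cup T'_{\sigma_1}$. In $T'$ the root $\lambda$ has precisely the two successors $\sigma_0, \sigma_1$, each interior node of $T'_{\sigma_i}$ retains its two successors, and the leaves of $T'$ (at level $n+1$) are the leaves of the two chosen subtrees, all of which avoid $d$ by construction.

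The only genuine \emph{obstacle} is the quantitative inequality $4 > 2 \cdot 1$, which is exactly why the author set $T$ to consist of strings of even length (making it 4-branching) rather than all binary strings; once this is noticed the argument is a short induction, and the remaining verification that $T'$ is indeed $(T,2)$-compatible of level $n+1$ is routine bookkeeping.
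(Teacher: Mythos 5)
Your proof is correct but takes a genuinely different route from the paper's. The paper works \emph{through the colouring, top-down}: given a 2-colouring of $T(n+1)$, it induces a derived 2-colouring of $T(n)$ by assigning each $\sigma\in T(n)$ the colour $d$ whenever strictly more than two of $\sigma$'s four successors in $T(n+1)$ receive colour $d$ (and colour $0$ otherwise); the induction hypothesis is then applied to this derived colouring, and the resulting level-$n$ tree is extended by choosing, below each of its leaves, two successors avoiding the forbidden colour---possible precisely because the derived colouring was set up so that fewer than three successors could be forbidden. You instead \emph{decompose at the root}: restrict the given colouring to each of the four shifted copies $T_\sigma$ of $T$ rooted at the $\sigma\in T(1)$, apply the induction hypothesis independently inside each copy to get $(d_\sigma, T'_\sigma)$, pigeonhole $\sigma\mapsto d_\sigma$ to find two copies sharing a forbidden colour $d$, and glue with $\lambda$. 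Both arguments are short and exploit exactly the inequality $4>2\cdot 1$, and your bookkeeping of levels and $(T,2)$-compatibility is fine. Worth noting, though, is why the paper prefers the derived-colouring formulation: it is the version that generalizes directly to Lemma \ref{nice}, where the branching $\kappa_i(n)$ grows with absolute level, so the subtrees below $T(1)$ are no longer isomorphic to the ambient tree and a decomposition-at-the-root would need a more awkwardly parametrized induction hypothesis. For the present lemma your approach is perfectly adequate and arguably cleaner.
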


\begin{proof}
\vspace{4pt} \noindent The case $ n=0 $ is trivial and, in fact, we
have already seen the case $ n=1 $ since there are four strings in $
T(1) $ and any two of these strings define a $ (T,2) $ compatible $
T' $ of level one.

\vspace{4pt} So suppose the result holds for all $ n'\leq n $. Given
 any 2-colouring of $ T(n+1) $ consider each $ \sigma \in T(n) $.
 Each such $ \sigma $ has four successors in $ T $ which are strings
 in
 $  T(n+1) $.
If there exists $ d<2 $ such that more than two of the successors $
\sigma' $ of $ \sigma $ in $ T $ have col$(\sigma')=d$ then define
col$(\sigma)=d $ and otherwise define col$(\sigma)=0$. This gives a
2-colouring of $ T(n) $ and by the induction hypothesis there exists
$ T' $ which is $ (T,2) $ compatible of level $ n $ and $ d<2 $ such
that no leaf $ \sigma $ of $ T' $ has col$(\sigma)=d $. In order to
define $ T'' $ which is $ (T,2) $ compatible and of level $ n+1 $
and such that no leaf $ \sigma' $ of $ T'' $ has col$(\sigma')=d $
just choose two extensions of each leaf $ \sigma $ of $ T' $ which
are not coloured $ d $. We have defined the 2-colouring of $ T(n) $
precisely so that this is possible.
\end{proof}

Now we see how to use this lemma in order to satisfy the first
requirement. Before defining $ \Pi $ we define a set of strings $
\Pi^{\star} $---these are strings which may or may not be in $ \Pi
$. We do not require that $ \Pi^{\star} $ is downwards closed. Once
we have defined this set we shall form $ \Pi $ by taking certain
strings from $ \Pi^{\star} $ and then adding strings in so that $
\Pi $ will be downwards closed. For every $ n $ we have to define
the set $ \Pi^{\star}(n) $ which is the set of strings in $
\Pi^{\star} $ of level $ n $ in $ \Pi^{\star} $, for each of these
strings $ \tau $ we have to define a value $ T^{\tau} $ and we also
have to ensure that $ \Psi(\tau;0)\downarrow $. The latter condition
we satisfy by defining $ \Psi(\tau;0) $ for all $ \tau \in
\Pi^{\star}(1) $. We shall explain exactly how one may define $
\Pi^{\star} $ and the other values just discussed in a moment, but
the important point here is just this; we can do so in such a way
that for any $ d<2 $ and any $ (T,2) $ compatible $ T' $ of level $
n \geq 1 $ there exists $ \tau\in \Pi^{\star}(n) $ such that $
T^{\tau}=T' $ and $ \Psi(\tau;0)= d $. Really this is completely
obvious---all you need do is to put enough strings in $
\Pi^{\star}(n) $ so that all possibilities can be realized.

\vspace{4pt} What this means is that if we define $ \Pi $ by taking
the strings in $ \Pi^{\star} $ except for those strings $ \tau $ for
which we observe that there exists $ \sigma \in T^{\tau} $ with $
\Psi_0(\sigma;0)=\Psi(\tau;0) $ then for every $ n $ there must
exist $ \tau \in \Pi^{\star}(n) $ which is in $ \Pi $ (and thus $
[\Pi] $ will be non-empty). This follows because we can consider the
values $ \Psi_0(\sigma;0) $ for $ \sigma $ in $ T(n) $ to define a
2-colouring of $ T(n) $. For any 2-colouring of $ T(n) $ there
exists $ d<2 $ and a $ (T,2) $ compatible $ T' $ of level $ n $ such
that no leaf of $ T' $ is coloured $ d $. Then $ \tau \in
\Pi^{\star}(n) $ with $ T^{\tau}=T' $ and $ \Psi(\tau;0)=d $ will be
a string in $ \Pi $.

\vspace{4pt} Before going on to consider how we may satisfy all
requirements, then, let's see how to define $ \Pi^{\star} $ (when we
are only looking to satisfy the first requirement). We define $
\Pi^{\star}(n) $ by recursion on $ n $. We define $
\Pi^{\star}(0)=\{ \lambda \} $ and $ T^{\lambda}= \{ \lambda \} $.
 Let $ \{T_0,...,T_{m-1} \} $ be the set of all $
(T,2) $ compatible $ T' $  which are of level $ 1 $. Let $
\tau_0,...,\tau_{2m-1} $ be pairwise incompatible, define $
\Pi^{\star}(1)= \{ \tau_0,...,\tau_{2m-1} \} $ and for each $ i< m $
define $ T^{\tau_{2i}}=T_i $, $ T^{\tau_{2i+1}}=T_i $, $
\Psi(\tau_{2i};0)=0 $ and $ \Psi(\tau_{2i+1};0)=1 $.

 Given $ \Pi^{\star}(n) $ for $ n>0 $ we define $ \Pi^{\star}(n+1)
$ as follows.
 For each  $ \tau \in \Pi^{\star}(n) $ let $
\{T_0,...,T_{m'-1} \} $ be the set of all $ (T,2) $ compatible $ T'
$ such that $ T^{\tau}\subset T' $ and which are of level $ n+1 $.
Let $ \tau_0,...,\tau_{m'-1} $ be pairwise incompatible extensions
of $ \tau $, enumerate these strings into $ \Pi^{\star}(n+1) $ and
for each $ i< m' $ define $ T^{\tau_i}=T_i $.

\vspace{4pt} In order to satisfy every requirement $ \mathcal{N}_i $
while maintaining non-empty  $ [\Pi] $ we must become a little more
sophisticated, but the basic idea remains the same. We need a
`bushier' $ T $ and we need also to use more colours for lower
priority requirements:

\begin{defin}
For every $ n $ the set $ T(n) $---the strings in $ T $ of level $ n
$--- are those elements of $ 2^{<\omega} $ of length $ \Sigma_{i< n}
(i+2) $.
\end{defin}

\begin{defin} For every i we let $ \kappa_i $ be defined as follows.
We have $ \kappa_0(0)=4 $. For all $n<i $, $ \kappa_i(n)=2 $ and for
all $ n\geq i $, $ \kappa_i(n)=2\kappa_i(n-1) $ (if $n-1\geq 0 $).
For all $ i $ we define ncol$(i)=2^{i+1} $.
\end{defin}

For every $ i $, then, the value ncol$(i) $ should be thought of as
the number of colours that we use in order to satisfy the
requirement $ \mathcal{N}_i $. We need a new version of lemma
\ref{twocol}:

\begin{lem} \label{nice} If $ T_0 $ is  $ (T,\kappa_i) $ compatible and of level $ n $
then for any ncol$(i)$-colouring of $ T_0 $ there exists $
T_1\subseteq T_0 $ which is $ (T,\kappa_{i+1}) $ compatible of level
$ n $ and $ d<$ncol$(i) $ such that no leaf $ \sigma $ of $ T_1 $
has col$(\sigma)=d$.
\end{lem}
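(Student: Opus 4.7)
My plan is to prove Lemma \ref{nice} by induction on $n$, splitting into two regimes according to whether $n\leq i$ or $n>i$. The split is natural because $\kappa_i$ and $\kappa_{i+1}$ agree (both equal $2$) at all levels below $i$, while from level $i$ upward $\kappa_{i+1}(m)=\kappa_i(m)/2$; so it is only at these upper levels that genuine pruning of $T_0$ is needed.

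For the base regime $n\leq i$, the tree $T_0$ has at most $\prod_{m<n}\kappa_i(m)=2^n\leq 2^i$ leaves, whereas the number of available colours ncol$(i)=2^{i+1}$ strictly exceeds this; pigeonhole produces some colour $d$ unused by the given colouring, and $T_1:=T_0$ is automatically $(T,\kappa_{i+1})$ compatible because $\kappa_{i+1}(m)=2=\kappa_i(m)$ for every $m<i+1$. For the inductive step $n>i$, the idea is to push the colouring down one level by a majority-vote rule. Each level-$(n-1)$ string $\tau$ of $T_0$ has $\kappa_i(n-1)=2\kappa_{i+1}(n-1)$ successors in $T_0$, all leaves; I set col$'(\tau)=c$ whenever strictly more than half of these successors are coloured $c$ (such a $c$ is necessarily unique), and otherwise I set col$'(\tau)=0$. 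This defines an ncol$(i)$-colouring of $T_0\upharpoonright(n-1)$, to which I apply the inductive hypothesis, obtaining $T_1'\subseteq T_0\upharpoonright(n-1)$ that is $(T,\kappa_{i+1})$ compatible of level $n-1$ together with some $d$ such that no leaf of $T_1'$ has col$'$-colour $d$. I then extend $T_1'$ to $T_1$ by selecting, for each leaf $\tau$ of $T_1'$, some $\kappa_{i+1}(n-1)$ successors of $\tau$ in $T_0$ whose col-colour is not $d$.

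The main obstacle is checking that enough non-$d$ successors exist at every such $\tau$, which requires a careful case analysis. If some $c\neq d$ was a strict majority among $\tau$'s successors, then colour $d$ appears on strictly fewer than $\kappa_i(n-1)/2=\kappa_{i+1}(n-1)$ of them, leaving more than enough. If instead no colour was a strict majority and col$'(\tau)=0$ by default, then every colour, including $d$, appears at most $\kappa_i(n-1)/2=\kappa_{i+1}(n-1)$ times, giving exactly enough non-$d$ successors. The only sub-case one must explicitly rule out, where $d=0$ and $0$ is itself a strict majority at $\tau$, cannot arise since it would force col$'(\tau)=0=d$, contradicting how $d$ was produced by the inductive hypothesis. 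Stitching together these extensions over all leaves of $T_1'$ then yields a $T_1\subseteq T_0$ that is $(T,\kappa_{i+1})$ compatible of level $n$ with no leaf coloured $d$, completing the induction.
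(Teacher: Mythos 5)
Your proof is correct and follows essentially the same approach as the paper's: the same split into base regime $n\leq i$ handled by pigeonhole, and the same inductive step using a strict-majority colouring pushed one level down, applying the inductive hypothesis, and then extending each surviving leaf by choosing half its successors that avoid colour $d$. Two harmless slips worth noting: in the base case you write ``$\kappa_{i+1}(m)=2=\kappa_i(m)$ for every $m<i+1$,'' but $\kappa_i(i)=4$; the equality holds only for $m<i$, which is all you actually need since $m<n\leq i$. And in the case analysis at the end, the sub-case that cannot arise is more generally ``$d$ itself is the strict majority at $\tau$'' (for any $d$, not only $d=0$), since that would force $\mathrm{col}'(\tau)=d$; your reason (contradiction with the inductive hypothesis) disposes of this general case just as well.
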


\begin{proof}
Given any fixed $ i $ we prove the result by induction on $ n $. The
base case, for those $ n\leq i $, follows trivially since there are
at most $ 2^{i} $ strings in $ T_0 $ of level $ n $ and
ncol$(i)=2^{i+1} $. So suppose that $n\geq i $ and that the result
holds for $ n $. Given any ncol$(i)$-colouring of $ T_0 $ which is $
(T,\kappa_i) $ compatible and of level $ n+1 $ consider each string
$ \sigma $ of level $ n $ in $ T_0 $. Such $ \sigma $ has $
2^{n-i+2} $ successors $ \sigma' $ in $ T_0 $. If there exists some
$ d<$ncol$(i)$ such that more than half of those successors $
\sigma' $ have col$(\sigma')=d $ then define col$(\sigma)=d$ and
otherwise define col$(\sigma)=0$. Let $ T_0' $ be the set of strings
in $ T_0 $ of level $\leq n $. We have defined an
ncol$(i)$-colouring of $ T_0' $. By the induction hypothesis there
exists $ T_1'\subseteq T_0' $ which is $ (T,\kappa_{i+1}) $
compatible of level $ n $ and $ d<$ncol$(i) $ such that no leaf $
\sigma $ of $ T_1' $ has col$(\sigma)=d$. In order to define $
T_1\subseteq T_0 $ which is $ (T,\kappa_{i+1}) $ compatible of level
$ n+1 $  such that no leaf $ \sigma $ of $ T_1 $ has
col$(\sigma)=d$, simply choose $ 2^{n-i+1} $ extensions $ \sigma' $
of each leaf $ \sigma $ of $ T_1' $ such that col$(\sigma')\neq d $.
We defined the colouring of $ T_0' $ precisely so that this is
possible.
\end{proof}

The intuition now runs as follows. For each $ i $ we shall define
any value $ \Psi(\tau;i)=d $ so that $ d<\mbox{ncol}(i)  $. We are
yet to define $ \Pi $ and $ \Pi^{\star} $, but we will shortly do so
in a manner analogous to what went before. Suppose that for some $ n
$ there are no members of $ \Pi^{\star}(n) $ in $ \Pi $. Let $
T_0=\bigcup_{i\leq n}T(i) $. Then we consider those values $ \hat
\Psi_0(\sigma;0) $ for the leaves $ \sigma $ of $ T_0 $ to define a
2-colouring of $ T_0 $. Lemma \ref{nice} then suffices to show, not
only that there exists $ \tau\in \Pi^{\star}(n) $ such that no leaf
$ \sigma $ of $ T^{\tau} $ has col$(\sigma)=\Psi(\tau;0) $, but that
there exists a range of such values---all those $ \tau $, in fact,
such that a) $ T^{\tau} $ is a subset of some fixed $ T_1\subset T_0
$ which is $ (T,\kappa_1) $ compatible of level $ n $ and b) such
that $ \Psi(\tau;0)=d_0 $ for some fixed $ d_0<2 $. Next we consider
those values $ \hat \Psi_1(\sigma;1) $ for the leaves $ \sigma $ of
$ T_1 $ to define a 4-colouring of $ T_1 $. Once again we apply
lemma \ref{nice} in order to obtain $ T_2\subset T_1 $ which is $
(T,\kappa_2) $ compatible of level $ n $ and $ d_1<4 $ such that if
$ \tau\in \Pi^{\star}(n) $, $ T^{\tau} \subseteq T_2 $ and $
\Psi(\tau;0)=d_0,\ \Psi(\tau;1)=d_1 $ then no leaf $ \sigma $ of $
T^{\tau} $ has $\hat \Psi_0(\sigma;0)=\Psi(\tau;0) $ or $\hat
\Psi_1(\sigma;1)=\Psi(\tau;1) $. Proceeding inductively in this way
we are able to reach the required contradiction.

\vspace{4pt} Once again we define $ \Pi^{\star}(n) $ by recursion on
$ n $. We define $ \Pi^{\star}(0)=\{ \lambda \} $ and $ T^{\lambda}=
\{ \lambda \} $.
 Given $ \Pi^{\star}(n) $ we define $ \Pi^{\star}(n+1) $ as
follows.
 For each $ \tau \in \Pi^{\star}(n) $ let $
\{T_0,...,T_{m-1} \} $ be the set of all $ (T,2) $ compatible $ T' $
such that $ T^{\tau}\subset T' $ and which are of level $ n+1 $. Let
ncol$(n)=m'$ and let $ \tau_0,...,\tau_{mm'-1} $ be pairwise
incompatible extensions of $ \tau $ (these strings may be thought of
as being divided into $ m $ collections of size $ m' $), enumerate
these strings into $ \Pi^{\star}(n+1) $ and for each $ i< m $
proceed as follows: for each $ j $ with $ m'i\leq j<m'(i+1) $ define
$ T^{\tau_j}=T_i $ and $ \Psi(\tau_j;n)=j-m'i$.

\begin{lem} \label{easy2} For any $ n $, any $ f\in \omega^{<\omega} $ of length $ n $ and any $ (T,2) $ compatible $
T' $ of level $ n $, if it is the case that for all $ i<n $, $
f(i)<\mbox{ncol}(i) $ then there exists $ \tau\in \Pi^{\star}(n) $
such that $ T^{\tau}=T' $ and $ \Psi(\tau)=f $.
\end{lem}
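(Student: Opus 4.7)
The plan is to prove this by induction on $n$, unpacking the recursive definition of $\Pi^{\star}(n)$ given just before the lemma statement.

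For the base case $n=0$, the empty function $f$ has length $0$ and $T' = \{\lambda\}$ is the unique $(T,2)$ compatible tree of level $0$. The string $\lambda \in \Pi^{\star}(0)$ has $T^{\lambda} = \{\lambda\}$ and $\Psi(\lambda)$ is the empty string, so we are done.

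For the inductive step, suppose the result holds for some $n$, and we are given $(T,2)$ compatible $T'$ of level $n+1$ and $f$ of length $n+1$ with $f(i) < \mathrm{ncol}(i)$ for all $i < n+1$. Let $T'_-$ be the set of strings of level $\le n$ in $T'$. Because $T'$ is $(T,2)$ compatible of level $n+1$, the set $T'_-$ is $(T,2)$ compatible of level $n$. Applying the induction hypothesis to $T'_-$ and $f \restr n$, we obtain $\tau \in \Pi^{\star}(n)$ with $T^{\tau} = T'_-$ and $\Psi(\tau) = f \restr n$. Now consult the recursive construction of $\Pi^{\star}(n+1)$: for this particular $\tau$, the family $\{T_0,\ldots,T_{m-1}\}$ of all $(T,2)$ compatible extensions of $T^{\tau}$ of level $n+1$ is enumerated, and $T'$ itself appears in this list, say as $T_i$. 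The construction then enumerates strings $\tau_{m'i},\ldots,\tau_{m'(i+1)-1}$ extending $\tau$ into $\Pi^{\star}(n+1)$, each with $T^{\tau_j} = T_i = T'$, and assigning $\Psi(\tau_j;n) = j - m'i$, where $m' = \mathrm{ncol}(n)$. Since $f(n) < \mathrm{ncol}(n) = m'$, setting $j := m'i + f(n)$ gives an index in the correct range, and the corresponding $\tau_j$ satisfies $T^{\tau_j} = T'$ and $\Psi(\tau_j) = f$.

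Since the lemma is really just a direct bookkeeping consequence of the construction, there is no serious obstacle; the only thing to check carefully is that truncating a $(T,2)$ compatible tree of level $n+1$ to its strings of level $\le n$ produces a $(T,2)$ compatible tree of level $n$, which follows immediately from the definition of compatibility (every non-leaf of level $< n$ still has exactly $\kappa(n') = 2$ successors, and the new leaves at level $n$ are appropriate strings of $T(n)$).
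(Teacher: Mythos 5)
Your proof is correct, and it is the straightforward induction on $n$ that the paper is alluding to when it says the proof is left to the reader. The base case, the truncation of a $(T,2)$-compatible tree of level $n+1$ to one of level $n$, the application of the induction hypothesis, and the location of the desired $\tau_j$ with $j = m'i + f(n)$ in the construction of $\Pi^{\star}(n+1)$ are all handled correctly, and you rightly note the one point that genuinely needs verifying (that the truncation $T'_-$ is still $(T,2)$-compatible of level $n$). The only thing worth making explicit is that $\Psi(\tau_j;i) = \Psi(\tau;i) = f(i)$ for $i<n$ because $\tau_j \supset \tau$ and $\Psi$ is a Turing functional whose axioms at argument $i$ are fixed by the time level $n$ of $\Pi^{\star}$ is defined, so only the new value $\Psi(\tau_j;n)$ needs arranging; you implicitly rely on this when you conclude $\Psi(\tau_j)=f$, and it is true, but stating it would close the argument completely.
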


\begin{proof} The proof is not difficult and is left to the reader.
\end{proof}

\vspace{4pt} We are now ready to define $ \Pi $. We do so in stages.

\vspace{4pt} \noindent \textbf{Stage 0}. We define $ \Pi_0= \{
\lambda \} $.

\vspace{4pt} \noindent \textbf{Stage} $\bf{s+1} $.  Initially $
\Pi_{s+1} $ is empty. For every string $ \tau $ which is a leaf of $
\Pi_s $ we proceed as follows. We shall have that $ \tau $ is a
string of level $ s $ in $ \Pi^{\star} $.  If it is not the case
that there exists $ \sigma \in T^{\tau} $ and $ i<s $ such that $
\hat \Psi_i(\sigma;i) $  is equal to $ \Psi(\tau;i) $ then enumerate
every successor of $ \tau $ in $ \Pi^{\star} $ into $ \Pi_{s+1} $,
together with all initial segments of such strings.

\vspace{4pt} We define $ \Pi= \bigcup_s \Pi_s $.

\begin{lem} \label{manlem} The class $ [\Pi] $ is non-empty.
\end{lem}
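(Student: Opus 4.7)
Suppose for contradiction that $[\Pi] = \emptyset$. Since $\Pi$ is downwards closed in $2^{<\omega}$ (initial segments are enumerated together with each new string) and has at most binary branching, K\"onig's lemma forces $\Pi$ to be finite, and hence there is some $n$ with $\Pi \cap \Pi^{\star}(n) = \emptyset$. I will contradict this by producing a single $\tau \in \Pi^{\star}(n)$ satisfying the \emph{strong check}: for every $\sigma \in T^{\tau}$ and $i < n$, $\hat \Psi_i(\sigma; i) \neq \Psi(\tau; i)$.

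The point of demanding the strong check is a monotonicity property: if it holds for $\tau \in \Pi^{\star}(n)$, then it automatically holds for every ancestor $\tau' \in \Pi^{\star}(n')$ of $\tau$, because $T^{\tau'} \subseteq T^{\tau}$ and $\Psi(\tau'; i) = \Psi(\tau; i)$ for all $i < n'$. Hence the stage check used in the construction of $\Pi$ passes at each such $\tau'$, so every ancestor (in particular $\tau$ itself) is enumerated into $\Pi$, contradicting $\Pi \cap \Pi^{\star}(n) = \emptyset$.

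To construct such a $\tau$, set $T_0 := \bigcup_{j \leq n} T(j)$, which is $(T, \kappa_0)$-compatible of level $n$ because $\kappa_0(n') = 2^{n'+2}$ matches the branching of $T$. Iteratively apply lemma \ref{nice} for $i = 0, 1, \ldots, n-1$: at stage $i$, having $(T, \kappa_i)$-compatible $T_i \subseteq T_{i-1}$ of level $n$, colour each string of $T_i$ by its value under $\hat \Psi_i(\cdot; i)$ (with divergence sent to colour $0$), giving an $\mbox{ncol}(i)$-colouring. Lemma \ref{nice} supplies $T_{i+1} \subseteq T_i$ that is $(T, \kappa_{i+1})$-compatible of level $n$, together with a value $d_i < \mbox{ncol}(i)$ appearing as the colour of no string of $T_{i+1}$. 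After $n$ iterations, $T_n$ is $(T, \kappa_n)$-compatible and, since $\kappa_n(n') = 2$ for every $n' < n$, in particular $(T, 2)$-compatible of level $n$. Lemma \ref{easy2} then furnishes $\tau \in \Pi^{\star}(n)$ with $T^{\tau} = T_n$ and $\Psi(\tau; i) = d_i$ for $i < n$; by construction this $\tau$ satisfies the strong check.

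The main obstacle is that lemma \ref{nice} as stated controls only the colours of leaves of $T_i$, while the strong check ranges over \emph{all} of $T^{\tau}$, including nodes at levels $n' < n$. I would patch this by strengthening lemma \ref{nice} so that it constrains the colour on every node: the base case $n' \leq i$ still holds, since a $(T, \kappa_i)$-compatible tree of level $n'$ has at most $2^{n'+1} - 1 < 2^{i+1} = \mbox{ncol}(i)$ strings in total, so some colour is unused; the inductive step adapts by applying the original majority-colouring trick at the top level and invoking the strengthened induction hypothesis on the truncation. Alternatively, since $\hat \Psi_i(\sigma; i) \downarrow$ forces $|\sigma| > i$, sufficiently short internal nodes of $T^{\tau}$ contribute nothing and the remaining cases can be dispatched level by level.
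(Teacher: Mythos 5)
Your plan is essentially the paper's: reduce via K\"onig's lemma to showing some level has a string in $\Pi$, iterate Lemma \ref{nice} to build $T_0 \supseteq T_1 \supseteq \cdots \supseteq T_n$ with excluded colours $d_0,\ldots,d_{n-1}$, invoke Lemma \ref{easy2} for a $\tau$ with $T^{\tau}=T_n$ and $\Psi(\tau;i)=d_i$, and observe that passing the check at $\tau$ propagates down to all ancestors (since $T^{\tau'}\subseteq T^{\tau}$ and $\Psi$-values are inherited), so $\tau\in\Pi$.

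The worry you flag at the end is a sharp reading of something the paper's exposition does gloss over, but it is not a real gap and the patch you suggest is unnecessary. The whole point of passing from $\Psi_i$ to $\hat\Psi_i$ is that $\hat\Psi_i$ is monotone in the oracle string: unfolding the definition (and inducting on the argument and on oracle length) shows that if $\hat\Psi_i(\sigma;i)\downarrow=x$ and $\sigma\subseteq\sigma'$, then $\hat\Psi_i(\sigma';i)\downarrow=x$. Now $T^{\tau}=T_n$ is a finite tree of level $n$, every $\sigma\in T^{\tau}$ extends to a leaf $\sigma''$ of $T^{\tau}$, and since $T_n\subseteq T_{i+1}$ and both are of level $n$, the leaves of $T^{\tau}$ are leaves of $T_{i+1}$. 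Thus if some internal $\sigma\in T^{\tau}$ had $\hat\Psi_i(\sigma;i)=d_i$, that value would propagate upward to a leaf of $T_{i+1}$, contradicting what Lemma \ref{nice} gave you. So controlling the leaf colours already controls everything, and you need neither a strengthened Lemma \ref{nice} nor the $|\sigma|>i$ bound (which, taken alone, does not dispose of long internal nodes anyway).
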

\begin{proof}
Suppose towards a contradiction that  $ n $ is the least such that
there exist no strings of level $ n $ in $ \Pi^{\star} $ which are
in $ \Pi $ (it follows from K\"{o}nig's lemma that such a
contradiction suffices to give the result). Let $ T_0
=\bigcup_{i\leq n}T(i) $. We proceed inductively to define $ T_i $
and $ d_{i-1} $ for each $ i\leq n, i\geq 1 $. Given $ T_i $ which
is $ (T,\kappa_i) $ compatible of level $ n $ we let the values $
\hat\Psi_i(\sigma;i) $ for those $ \sigma $ which are leaves of $
T_i $ define an ncol$(i) $-colouring of $ T_i $ (we may assume that
for any $ \sigma $ and any $ i $, if $\Psi_i(\sigma;i) $ is defined
then it is less than  $ \mbox{ncol}(i) $---otherwise we may regard
this computation as non-convergent). We then apply lemma \ref{nice}
in order to find $ T_{i+1}\subseteq T_{i} $ which is $
(T,\kappa_{i+1}) $ compatible of level $ n $ and $ d_i<$ncol$(i) $
such that no leaf $ \sigma $ of $ T_{i+1} $ has col$(\sigma)=d_i$.
Let the string $ f $ of length $ n $ be defined such that for all $
i<n $, $ f(i)=d_i $. By lemma \ref{easy2} there exists $ \tau\in
\Pi^{\star}(n) $ such that $ T^{\tau}= T_n $ and $ \Psi(\tau)=f $.
Then $ \tau $ is an element of $ \Pi^{\star}(n) $ which is in $ \Pi
$.

\end{proof}

\begin{lem} If $ A\in [\Pi] $ then $ deg(A) $ satisfies the cupping
property.
\end{lem}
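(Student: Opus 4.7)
The plan is to invoke the reduction recalled at the beginning of the section: it suffices to exhibit, for any $A \in [\Pi]$, a 2-branching tree $T^A \leq_T A$ such that $A \not\leq_T C$ whenever $C \in [T^A]$. The natural candidate is the tree already named in the construction, namely $T^A = \bigcup\{T^\tau : T^\tau\downarrow,\ \tau \subset A\}$, and the task splits into (i) verifying that $T^A$ has the required shape and is $A$-computable, and (ii) establishing that the requirements $\mathcal{N}_i$ collectively force $\Psi(A) \not\leq_T C$, whence $A \not\leq_T C$.

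For (i), since $A \in [\Pi]$ there is, for every $s$, an initial segment $\tau_s \subset A$ of level $s$ in $\Pi$. By the construction of $\Pi^\star$ each $T^{\tau_s}$ is finite and $(T,2)$-compatible, and $T^{\tau_s} \subsetneq T^{\tau_{s+1}}$, with every leaf of $T^{\tau_s}$ acquiring precisely two successors at the next level; so $T^A$ is indeed 2-branching. Because $\Pi^\star$ and the assignments $\tau\mapsto T^\tau$ are uniformly computable, an oracle for $A$ locates the chain $\tau_0 \subset \tau_1 \subset \cdots$ and reads off the level-$n$ strings of $T^A$ directly from $T^{\tau_s}$ for any $s\geq n$, giving $T^A \leq_T A$.

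For (ii), I would verify each $\mathcal{N}_i$ directly. Fix $i$ and $C \in [T^A]$, and suppose towards a contradiction that $\Psi_i(C;i)\downarrow = \Psi(A;i)$. Axioms defining $\Psi(\tau;i)$ are laid down for every $\tau \in \Pi^\star$ of level $>i$, so $\Psi(A;i)$ is well-defined and equals $\Psi(\tau_s;i)$ for all $s>i$. Pick $s > i$ large enough that the initial segment $\sigma \subset C$ which is a leaf of $T^{\tau_s}$ (existing because $C$ has infinitely many initial segments in $T^A$) is long enough to witness $\hat\Psi_i(\sigma;i) = \Psi_i(C;i)$. Since $\tau_s$ was admitted to $\Pi$ at stage $s+1$, the stage rule guarantees that no leaf of $T^{\tau_s}$ has $\hat\Psi_i$-value at $i$ equal to $\Psi(\tau_s;i)$, contradicting the choice of $\sigma$. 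Thus $\Psi(A) \not\leq_T C$, and since $\Psi$ is a Turing functional the assumption $A \leq_T C$ would yield $\Psi(A) \leq_T C$; hence $A \not\leq_T C$, and the criterion from the start of the section gives the cupping property for $\deg(A)$. I expect no serious obstacle here: the combinatorial heart is packaged in lemmas \ref{nice} and \ref{manlem}, and this lemma is essentially a bookkeeping check that the survival condition built into $\Pi$ at each stage was exactly what the requirements $\mathcal{N}_i$ demand.
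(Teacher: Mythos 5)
Your proof is correct and takes essentially the same approach as the paper's: the paper also argues that a witnessing $\sigma\subset C$ in $T^A$ with $\hat\Psi_i(\sigma;i)=\Psi(A;i)$ would, at a suitably late stage (there chosen as the least $s>i+1$ with $s$ exceeding the level of $\sigma$), cause the relevant leaf of $\Pi_{s-1}$ to fail the survival check, contradicting $A\in[\Pi]$. The only differences are cosmetic: you spell out the preliminary verification that $T^A$ is 2-branching and $A$-computable (which the paper leaves implicit in the construction of $\Pi^\star$), and your stage bookkeeping is shifted by one (the check that keeps $\tau_s$ alive is performed at stage $s+1$ on the leaf $\tau_s$ of $\Pi_s$, not the stage at which $\tau_s$ itself was admitted), but the argument is logically identical.
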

\begin{proof} Suppose that $ A\in [\Pi] $ and that there exists $ C\in [T^A] $ such that $
\Psi_i(C)=\Psi(A) $. Then, in particular, there exists $ \sigma $ in
$ T^A $ which is an initial segment of $ C $ and such that $ \hat
\Psi_i(\sigma;i)=\Psi(A;i) $. Let $ \sigma $ be the shortest,
suppose that $ \sigma $ is of level $ n $ in $ T^A $ and let $ s $
be the least stage such that $ s>i+1$ and $ s>n $. At stage $ s $ in
the construction of $ \Pi $ we shall ensure that $ A\notin [\Pi] $.
This gives us the required contradiction.
\end{proof}

\vspace{4pt} What can we say about $ \Pi^0_1 $ classes every member
of which is of degree with strong minimal cover? Of course, it is a
trivial matter to define a $ \Pi^0_1 $ class which contains a single
(computable) member and so which satisfies this condition. On the
other hand, there cannot exist such a class of positive measure
since every such class contains a member of every random degree, and
therefore an element of every degree above $ \bf{0}' $. In a sense,
then, the following theorem is the strongest we could hope for:

\begin{theo}
There exists a non-empty $ \Pi^0_1 $ class with no computable
members, every member of which is of degree with strong minimal
cover.
\end{theo}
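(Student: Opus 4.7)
The plan is to adapt the bushy-tree colouring construction used in the alternative proof of Theorem \ref{nsm}. As there, I would construct a downwards closed computable $\Pi \subseteq 2^{<\omega}$ by first defining a candidate set $\Pi^{\star}$ level by level (assigning to each $\tau\in \Pi^{\star}$ a sub-tree $T^{\tau}$ of a sufficiently bushy base tree $T$, together with partial values of an auxiliary functional $\Psi(\tau)$), and then forming $\Pi$ in stages by pruning strings whose associated sub-tree has failed to support the relevant covering construction. The new ingredient is the nature of the requirements: rather than driving a uniform-in-$A$ diagonalization against $\Psi_i(C)=\Psi(A)$, the requirements now drive a uniform-in-$A$ construction of a strong minimal cover $B\geq_T A$ for each $A\in [\Pi]$.

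The cover $B$ for a given $A\in [\Pi]$ is to be built by the standard forcing construction with $A$-computable perfect trees. For each Turing functional $\Phi_i$ one must select, $A$-computably, a perfect sub-tree of the current tree on which $\Phi_i$ either $\Phi_i$-splits (yielding $\Phi_i(B)\equiv_T B$ modulo $A$) or is constant/partial (yielding $\Phi_i(B)\leq_T A$). To guarantee that this dichotomy can always be resolved $A$-computably and uniformly as $A$ ranges over $[\Pi]$, I would encode ``$\Phi_i$ splits on this sub-tree versus not'' as a two-colouring of the leaves of the appropriate layer of $T^{\tau}$, and apply a strengthened analogue of Lemma \ref{nice}: for any $\mathrm{ncol}(i)$-colouring of a sufficiently bushy $(T,\kappa_i)$-compatible sub-tree one can extract a $(T,\kappa_{i+1})$-compatible sub-sub-tree which is monochromatic in a useable way. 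Having chosen the colour, the corresponding sub-tree either splits uniformly (and we proceed with splitting forcing) or avoids splittings altogether (and we proceed with non-splitting forcing).

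The construction of $\Pi^{\star}$ then populates each level with all admissible extensions consistent with every possible choice of colour $d<\mathrm{ncol}(i)$ for each $i<n$, and $\Pi$ is obtained by removing at stage $s$ any $\tau$ for which some $\hat\Phi_i(\sigma;i)$ with $\sigma\in T^{\tau}$ is witnessed to violate the chosen colour. Non-emptiness of $[\Pi]$ is then verified exactly in the style of Lemma \ref{manlem}: applying the strengthened colouring lemma iteratively for $i=0,1,\dots,n-1$ extracts, from any putative obstruction at level $n$, a $\tau\in \Pi^{\star}(n)$ surviving into $\Pi$. The absence of computable members can be enforced by interleaving a standard splitting requirement into the levels of $T$, or alternatively, since $[\Pi]$ has no isolated paths by construction, by observing that any computable path would be refuted by the diagonal constructed along the associated $T^{A}$.

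The main obstacle, in my view, is that the colour ``$\Phi_i$ splits on this sub-tree'' is $\Sigma^0_1$ rather than a finitary condition on a single string, whereas the colouring argument in Theorem \ref{nsm} used only the finitary condition $\Psi_i(\sigma;i)=\Psi(\tau;i)$. One must therefore approximate the splitting condition by a bounded-level finitary condition (``$\Phi_i$ $s$-splits within the first $s$ many levels''), and then control the error introduced by this approximation. This is where the $\mathbf{0}$-dominated basis theorem becomes indispensable: by restricting attention to $\mathbf{0}$-dominated members of $[\Pi]$ (and further pruning $\Pi$ if necessary so that all its paths are $\mathbf{0}$-dominated), the use of a computable function dominating the $A$-computable splitting-search function allows the approximation to be absorbed, and the combinatorial and verification arguments then proceed by direct analogy with the proof of Theorem \ref{nsm}.
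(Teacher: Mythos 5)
Your plan is genuinely different from the paper's, and the combinatorics it would require is, I think, the wrong one for this theorem. The paper does not attempt to embed the strong-minimal-cover machinery into the $\Pi^0_1$ class at all. Instead it constructs a downwards closed $\Pi$ with $[\Pi]$ non-empty and without computable members such that every $A\in[\Pi]$ is c.e.\ traceable, and then simply cites the fact that c.e.\ traceable degrees have strong minimal covers (Ishmukhametov, via Theorem \ref{favthe} and the equivalences of Section 4). The construction of $\Pi$ is a module/priority argument on a computable tree, not a bushy-tree colouring argument: each node of level $n+i$ carries a module $(i,n)$ that searches for $\tau'$ on which $\Psi_i(\tau';n)$ converges, and on success prunes the tree above the node so that $\Psi_i(A;n)$ is pinned to that single value for all $A\in[\Pi]$ through it, enumerating the value into a trace. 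The trace bound is then just a count of nodes of level $n+i$ ever created (at most $2^{n+i}(n+i+1)!$), which the fixed computable $p$ dominates; $\mathcal{P}_i$-modules handle the absence of computable members. Lemma \ref{nice} and the $\Pi^{\star}$-with-colourings machinery from the Theorem \ref{nsm} proof are not used.

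The obstacle you flag in your last paragraph is real and, as far as I can see, your proposed fix does not resolve it. In the cupping proof, a colour is a single convergent value $\hat\Psi_i(\sigma;i)$ attached to a leaf, and discarding strings by observed colour is a $\Pi^0_1$ operation. The dichotomy ``$\Phi_i$ splits on this subtree vs.\ is constant/partial'' is a $\Sigma^0_1$ property of an entire subtree, not a colour of a leaf, and there is no finite stage at which a two-colouring of leaves becomes well defined. The remedy of ``further pruning $\Pi$ so that all its paths are $\mathbf{0}$-dominated'' is not available: ``$A$ is $\mathbf{0}$-dominated'' is not a $\Pi^0_1$ property, and the $\mathbf{0}$-dominated basis theorem gives one member of a class, not a non-empty $\Pi^0_1$ subclass consisting only of such members. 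Even if every path of $[\Pi]$ happened to be $\mathbf{0}$-dominated, the dominating function varies with $A$, so there is no single computable bound to fold into the colouring at a fixed level of $T$ uniformly across $[\Pi]$. The paper avoids all of this precisely by keeping the cover construction (property $(\dagger)$ of Section 5 and the argument of Section 6) \emph{outside} the $\Pi^0_1$ class: the class only certifies a finitary, genuinely $\Pi^0_1$ condition (c.e.\ traceability) that is already known to be sufficient. Your sketch also does not develop the non-computable-members requirement; the paper handles it with explicit $\mathcal{P}_i$-modules.
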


\begin{proof} We shall construct downwards closed $ \Pi $ such that
$ [\Pi] $ is non-empty and contains no computable elements, and such
that every member of this class is c.e. traceable. Let $ p $ be a
computable function which dominates every function $ p_i $ such that
$ p_i(n)=2^{n+i}(n+i)!$---why it that we consider this particular
function will become clear subsequently. We shall act in order to
satisfy the requirements:

\begin{enumerate}

\item[$\mathcal{C}_i:$]  $ (A\in [\Pi]) \wedge (\Psi_i(A) $ is
total) $ \Rightarrow $  there exists computable $ h $ such that $
\vline W_{h(n)} \vline \leq p(n) $ and $ \Psi_i(A;n)\in W_{h(n)} $
for all $ n $.

\item[$\mathcal{P}_i:$] If $ A\in [\Pi] $ then $ A\neq \Psi_i(\emptyset) $.

\end{enumerate}

\vspace{4pt} So let us consider first how to satisfy the requirement
$ \mathcal{C}_0 $. At the end of each stage $ s+1 $ we will add each
string $ \tau $ of length $ s+1 $ into $ \Pi $ unless $ \tau $ has
already been declared terminal. Various strings in $ \Pi $ will be
declared as `nodes' and will be allocated modules of two kinds. A $
\mathcal{C} $ module of the form $ (0,n) $  is concerned with
satisfaction of the requirement $ \mathcal{C}_0 $. If allocated to
the string $ \tau $ it searches at each stage $ s+1 $ for $ \tau'
\supseteq \tau $ with two incompatible extensions of length $ s $
which have not been declared terminal, and such that the computation
$ \Psi_0(\tau';n) $ converges in at most $ s $ steps. When the
module $ (0,n) $ finds such $ \tau' $  we say that the module
`acts'. All strings which are proper extensions of $ \tau $ are
declared not to be nodes and all modules are removed from these
strings. It chooses incompatible $ \tau_0, \tau_1 $ extending $
\tau' $ which are of length $ s $ and which have not been declared
terminal. It declares all extensions of $ \tau $ which are not
compatible with either $ \tau_i $ to be terminal and allocates the
module $ (0,n+1) $ to each of the $ \tau_i $ which are now declared
to be nodes. Thus if $ A\in [\Pi] $ extends $ \tau $ the module $
(0,n) $ restricts $ \Psi_0(A;n) $ to at most one possible value. The
module acts only once (for given $ \tau $). The modules $ (0,n+1) $
allocated to each $ \tau_i $ then combine in order to restrict $
\Psi_0(A;n+1) $ to at most two possible values, and so on.

\vspace{4pt} In order to satisfy the requirement $ \mathcal{C}_1 $
we shall proceed in just the same way except that, whereas we begin
to allocate modules for the sake of requirement $ \mathcal{C}_0 $ at
the node $ \lambda $, now we only begin to allocate modules for the
sake of this requirement at nodes of level 1 (nodes which have
precisely one proper initial segment as a node). In general we begin
to allocate modules for the sake of satisfying the requirement $
\mathcal{C}_i $ at nodes of level $ i $.

\vspace{4pt}
 The $ \mathcal{P} $ module $ (i) $
allocated to the node $ \tau $  is concerned with satisfaction of
the requirement $ \mathcal{P}_i $. We shall have that $ \tau $ is a
node of level $ i $. If $ \tau'\supset \tau $ is a node and there
does not exist any node $ \tau'' $ with $ \tau \subset \tau''
\subset \tau' $, then $ \tau' $ is called a `successor node' of $
\tau $. If $ (i) $ has not already acted then $ \tau $ will have two
successor nodes, $ \tau_0 $ and $ \tau_1 $ say.  If it finds at any
stage $ s+1 $ that $ \Psi_i(\emptyset) $ extends one of these
successor nodes $ \tau_i $ then the module `acts' (just the once) by
declaring all extensions of $ \tau $ which are not compatible with $
\tau_{\vline 1-i \vline} $ as terminal and not to be nodes, and by
removing all modules from these strings.

\vspace{4pt} The only point of any difficulty in this construction
is that the following kind of injury may occur. Let's suppose that $
\tau'\supset \tau $ are both nodes. A module allocated to $ \tau' $
may act so as to restrict the number of possible values $
\Psi_i(A;n) $ for all $ A\in [\Pi] $ extending $ \tau' $ and it may
then be the case that a module allocated to $ \tau $ acts and
defines a node or nodes of the form $ \tau'' $ such that $
\Psi_i(\tau'';n)\uparrow $. The simple remedy to this apparent
problem is to observe that we can easy bound the number of injuries
that can occur.

\vspace{4pt} We are ready to define the construction.

\vspace{4pt} \noindent \textbf{The module} $ (i,n) $
\textbf{allocated to} $ \tau $. At stage $ s+1 $ the module searches
for $ \tau' \supseteq \tau $ with two incompatible extensions of
length $ s $ which have not been declared terminal, and such that
the computation $ \Psi_i(\tau';n) $ converges in at most $ s $
steps. If there exists such $ \tau' $  we say that the module
`acts'. In this case it carries out the following instructions:
\begin{itemize}

\item  All strings which are proper extensions of $ \tau $ are
declared not to be nodes and all modules are removed from these
strings.

\item  It chooses incompatible $ \tau_0, \tau_1 $ extending $ \tau' $
which are of length $ s $ and which have not been declared terminal.

\item
The node $ \tau $ will have been allocated a module $ (j-1) $, here
$ j-1$ will be the level of $ \tau $ as a node. The module $ (i,n) $
then declares all extensions of $ \tau $ which are not compatible
with either $ \tau_i $ to be terminal and allocates the modules $
(j) $ and $ (j',j-j') $ for each $ j'\leq j $ to each of the $
\tau_i $ which are now declared to be nodes.

\item The module enumerates the tuple $ (i,n,\Psi_i(\tau';n))
$.
\end{itemize}

\vspace{4pt} The instructions for the module $ (i) $ allocated to $
\tau $ are precisely as previously described.

\vspace{4pt} \noindent \textbf{Stage 0}. Enumerate $ \lambda $ into
$ \Pi $, declare $ \lambda $ to be a node and allocate it the
modules $ (0,0) $ and $ (0) $.

\vspace{4pt} \noindent \textbf{Stage} $\bf{s+1}$. Run all modules
allocated to all nodes prior to this stage, in order of the level of
the node. All modules allocated to each node can be run in any fixed
order, but we only allow one module allocated to each node to act at
any given stage. For each string of $ \tau $ of length $ s+1 $ which
has not been declared terminal proceed as follows 1) enumerate $
\tau $ into $ \Pi $, 2) declare $ \tau $ to be a node, 3) if $ \tau
$ is a node of level $ n $ then allocate the modules $ (n) $ and $
(n',n-n') $ for every $ n' \leq n $ to $ \tau $.

\vspace{4pt} \noindent \textbf{The verification}.  Let's say that $
\tau $ is a  `final node' if there is a point of the construction at
which it is declared to be a node and after which it never
subsequently declared not to be a node. If $ \tau'\supset \tau $ are
both final nodes and there does not exist any final node $ \tau'' $
with $ \tau \subset \tau'' \subset \tau' $, then $ \tau' $ is called
a `final successor node' of $ \tau $. We say that $ \tau $ is a
final node of level $ n $ if it is a final node and has precisely $
n $ proper initial segments which are final nodes. We first show
that if $ \tau $ is a final node of level $ n $ then:

\begin{enumerate}
\item $ \tau $ has at at least one final successor node.

\item if $ \tau' $ is a final successor node of $ \tau $ then $
\tau' \not \subset \Psi_n(\emptyset) $.

\item if $ A\in [\Pi] $ extends $ \tau $ then it extends a final
successor node of $ \tau $.

\end{enumerate}
This suffices to show that $ [\Pi] $ is non-empty and that any $
A\in [\Pi] $ is incomputable. So suppose that $ \tau $ is a final
node of level $ n $. Then subsequent to the last stage  in the
construction at which $ \tau $ is declared to be a node, stage $ s $
say, no $ \mathcal{C} $ module allocated to a node which is a proper
initial segment of $ \tau $ acts. Let $ s'>s $ be the last stage at
which any $ \mathcal{C} $ module allocated to $ \tau $ acts, and if
there exists no such then let $ s'=s+1$. By the end of stage $ s' $,
$ \tau $ has precisely two successor nodes, $ \tau_0 $ and $ \tau_1
$ say and these are the only strings in $ \Pi $ extending $ \tau $
of length $ \vline \tau_0 \vline $ (we can assume that a $
\mathcal{P} $ module allocated to a node does not act until two
stages after the node has been declared). These two nodes are both
final and satisfy the property that they are not initial segments of
$ \Psi_n(\emptyset) $ unless the module $ (n) $ allocated to $ \tau
$ subsequently acts so as to declare some $ \tau_i $ terminal. In
this case the remaining successor node satisfies the required
property.

\vspace{4pt} We are left to show that if $ A\in [\Pi] $ then $ A $
is c.e. traceable. Consider the requirement $ \mathcal{C}_i $.
Observe first that for all $ n $, $ A $ extends a final node which
is allocated the module $ (i,n) $ so that if $ \Psi_i(A;n) $ is not
equal to some value $ d $ for which we enumerate a tuple $ (i,n,d) $
then $ \Psi_i(A;n) $ is undefined (it is a basic result of $ \Pi^0_1
$ classes that since $ [\Pi] $ contains no computable members it
must be perfect, so that if $ \Psi_i(\tau;n)\downarrow $ for some $
\tau\subset A $ then we will eventually be able to find the
incompatible extensions required for the module to act). Now if a
tuple of this form is enumerated then it is enumerated by a node of
level $ n+i $ and each such node can only enumerate at most one
tuple of this form---each time a string is declared to be a node we
consider this to be a new node. It therefore suffices to show that
for every $ n $ there exist at most $ 2^{n}(n+1)! $ many nodes
declared of level $ n $. If we do this then we will have shown that
for every $ n $ there at most $ 2^{n+i}(n+i+1)! $ values $ d $ for
which we enumerate a tuple $ (i,n,d) $. If we define $ W_{h'(n)} $
to be this set of values then for almost all $ n $ we have $ \vline
W_{h'(n)} \vline \leq p(n) $ so that a finite adjustment to $ h' $
yields the function $ h $ required for satisfaction of the
requirement $ \mathcal{C}_i $. We proceed by induction. The case $
n=0 $ is clear, so suppose that the result holds for all $ n'\leq n
$ so that there are at most $ 2^{n}(n+1)! $ nodes defined of level $
n $. Each such node is allocated $ n+1 $ different $ \mathcal{C} $
modules and can therefore have at most $ 2(n+2)$  different
successors during the construction. A simple calculation $
2(n+2).2^{n}(n+1)!=2^{n+1}(n+2)! $ completes the induction step.

\end{proof}

\section{The FPF degrees}

\begin{defin} We say that $ A\subseteq \omega $ is of fixed
point free (FPF) degree if there exists $ f\leq_T A $ such that $
\Psi_n(\emptyset) \neq \Psi_{f(n)}(\emptyset) $ for all $ n $. We
say $ f $ is DNR if for all $ n $ if $ \Psi_n(\emptyset;n)\downarrow
$ then $f(n)\neq \Psi_n(\emptyset;n) $.
\end{defin}

\begin{defin}
 We say that $ T \subseteq 2^{<\omega} $ is a \textbf{weak c.e.
tree} if it has a computable enumeration $ \{ T_s \}_{s\geq 0} $
such that $ T_0 =\{ \lambda \} $ and such that for all $ s\geq 0 $,
$ \vline T_{s+1} \vline -\vline T_s \vline \leq 1 $ and if $ \tau $
is in $ T_{s+1}-T_s $ then $ \tau $ is a leaf of $ T_{s+1} $.
\end{defin}

\begin{defin} We say a set of strings is \textbf{prefix-free} if its elements are
pairwise incompatible. Given $ T\subseteq 2^{<\omega} $  and $ \tau
\in T $ we define $ T_{\tau}= \{ \tau'\in T: \tau' \supseteq \tau
\}$ and we define $ \vline \tau \vline_T $ to be the level of $ \tau
$ in $ T $. We say that $ T'\subseteq T $ is $ T $-\textbf{thin} if
$ \lambda \in T' $ and for every $ \tau\in T' $ and every
prefix-free $ \Lambda \subseteq T'_{\tau} $ we have that $
\Sigma_{\tau'\in \Lambda} 2^{-\vline \tau' \vline_{T_{\tau}}}\leq 1
$.
\end{defin}

\begin{defin} We let $ C(\sigma) $ denote the plain Kolmogorov complexity
of $ \sigma $.
\end{defin}

\begin{theo}[\cite{KMS}] \label{KMS1} If $ A $ is of FPF degree then there exists $ g\leq_T A $ such that
for all $ n $, $ C(A\upharpoonright g(n))>n $.
\end{theo}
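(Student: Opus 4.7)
The plan is to first reduce to the case where $A$ computes a diagonally non-recursive (DNR) function, and then use a recursion-theoretic diagonalization to build $g$. Given an FPF function $f\leq_T A$ with $\Psi_{f(n)}(\emptyset)\neq \Psi_n(\emptyset)$ for all $n$, the standard $s$-$m$-$n$ argument converts $f$ into a DNR function $h\leq_T A$: fix a computable $s$ with $\Psi_{s(n)}(\emptyset;\cdot)$ the constant function with value $\Psi_n(\emptyset;n)$ (where defined), and observe that $f(s(n))$ cannot index the same computation, from which $h$ DNR follows after a harmless shift.

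So assume we have DNR $h\leq_T A$ and aim to produce $g\leq_T A$ with $C(A\upharpoonright g(n))>n$ for every $n$. Let $V_n=\{\sigma : C(\sigma)\leq n\}$, a uniformly c.e. family with $|V_n|<2^{n+1}$, and organize its enumeration as a weak c.e. tree of the kind just defined, with the Kraft--Chaitin inequality for descriptions yielding a naturally $T$-thin substructure. Using the $s$-$m$-$n$ and recursion theorems, I would produce a total computable map $n\mapsto e(n)$ such that $\Psi_{e(n)}(\emptyset;e(n))$ monitors both the enumeration of $V_n$ and the growing stagewise knowledge of initial segments of $A$, and attempts to return a value that would be consistent only if every candidate $A\upharpoonright m$ at the targeted lengths were eventually enumerated into $V_n$. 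Because $h(e(n))\neq\Psi_{e(n)}(\emptyset;e(n))$, the DNR dodge certifies, $A$-effectively, a length $g(n)$ at which $A\upharpoonright g(n)\notin V_n$, and this length is then extractable from $h(e(n))$ using $A$. The $T$-thin bookkeeping ensures that the total weight of the descriptions we must rule out remains at most one, so that a single diagonal miss at $e(n)$ suffices.

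The main obstacle will be arranging the self-reference cleanly: the index $e(n)$ must be computable from $n$ alone, yet drive a computation that reads both stagewise-enumerated descriptions and initial segments of $A$, and the DNR inequality applied at $e(n)$ must translate \emph{exactly} into a lower bound on $C(A\upharpoonright g(n))$ for the specific $g(n)$ our construction outputs, rather than merely witnessing high complexity at some unspecified location. Matching the enumeration rate of $V_n$ against the growth rate of $g$, via the weak c.e. tree structure, is what ties these together; if this matching is set up sloppily then the diagonal miss at $e(n)$ only rules out \emph{some} string at length $g(n)$ and not the one lying on $A$.
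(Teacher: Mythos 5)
The paper does not give a proof of Theorem~\ref{KMS1}; it cites the result directly from~\cite{KMS}. So there is no in-paper argument to compare against, and I will assess your proposal on its own terms: it contains a genuine gap that I do not think can be repaired along the lines you describe.

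The reduction to a DNR function $h\leq_T A$ is fine, and so is the setup with $V_n=\{\sigma:C(\sigma)\leq n\}$. The problem is the central step. You ask for a computable $n\mapsto e(n)$ such that $\Psi_{e(n)}(\emptyset;e(n))$ ``monitors\ldots the growing stagewise knowledge of initial segments of $A$.'' That computation runs with the empty oracle, and $A$ is not computable (if it were, the theorem would be false), so $\Psi_{e(n)}(\emptyset;\cdot)$ has no means whatever of accessing information about $A$, stagewise or otherwise. The ``main obstacle'' you flag at the end is therefore not a bookkeeping nuisance but the heart of the matter, and it blocks the plan outright: the DNR inequality $h(e(n))\neq\Psi_{e(n)}(\emptyset;e(n))$ concerns a computation that cannot see $A$, so it cannot by itself pick out a length $m$ with $A\upharpoonright m\notin V_n$. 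There is a second difficulty as well: a single inequality of the form $h(e(n))\neq\Psi_{e(n)}(\emptyset;e(n))$ rules out exactly one forbidden value, and the assertion that ``a single diagonal miss\ldots suffices'' to escape a set $V_n$ of size up to $2^{n+1}$ is unjustified; straightforward encodings do not make it true.

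A correct argument diagonalizes against the descriptions, not against $A$, and only returns to $A$ at the very end through a use function. By $s$-$m$-$n$ there is a primitive recursive $q(n,\tau)$ such that $\Psi_{q(n,\tau)}(\emptyset;e)$ returns the $e$-th component of the tuple coded by $U(\tau)$ (where $U$ is the reference machine) whenever this makes sense. Let $L(n)=1+\max\{q(n,\tau):|\tau|\leq n\}$, a computable function, and let $p(n)$ be a canonical code for $\langle h(0),\ldots,h(L(n)-1)\rangle$; then $p\leq_T h\leq_T A$. If some $\tau$ with $|\tau|\leq n$ satisfied $U(\tau)=p(n)$, then $\Psi_{q(n,\tau)}(\emptyset;q(n,\tau))$ would converge to the $q(n,\tau)$-th coordinate of $p(n)$, which by construction is $h(q(n,\tau))$, contradicting that $h$ is DNR. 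Hence $C(p(n))>n$. Finally, pass from $p$ to initial segments of $A$: take $\Phi$ with $\Phi(A)=p$ and use $u(n)$ on input $n$, so that $A\upharpoonright u(n)$ together with $n$ determines $p(n)$, whence $C(A\upharpoonright u(n))\geq C(p(n))-O(\log n)\geq n-O(\log n)$; choosing $n'$ with $n'-O(\log n')\geq n$ and setting $g(n)=u(n')$ gives $C(A\upharpoonright g(n))>n$ with $g\leq_T A$. Note how this avoids your obstacle: the oracle-free computation never looks at $A$, only at the hypothetical short description $\tau$, and the DNR dodge is invoked at a whole computable family of indices $q(n,\tau)$ rather than at a single $e(n)$. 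The $T$-thin machinery you borrow from Theorem~\ref{equiv} is also best avoided here, since in this paper that theorem is proved using the present one.
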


\begin{theo} \label{equiv} The following conditions on $ A \subseteq \omega $ are equivalent:

\begin{enumerate}
\item   For any weak c.e. tree $ T $, if $
A\in [T] $ then there exists c.e. $ T'\subseteq 2^{<\omega} $ which
is $ T $-thin and such that $ A\in [T'] $ ($ T' $ is not required to
be a weak c.e. tree).

\item There exists a computable   $ p
$ such that for every $ f\leq_T A $ there exists a computable $ h $
such that $ \vline W_{h(n)}\vline \leq p(n) $ for all $ n\in \omega
$ and for infinitely many $ n $ we have $ f(n)\in W_{h(n)} $.

\item For every  $ f\leq_T A $ there exists a
computable $ h $ such that $ \vline W_{h(n)}\vline \leq n $ for all
$ n\in \omega $ and for infinitely many $ n $ we have $ f(n)\in
W_{h(n)} $.

\item The degree of $ A $ is not FPF.

\end{enumerate}
\end{theo}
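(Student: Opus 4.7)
My plan is to close the cycle $(3)\Rightarrow(2)\Rightarrow(4)\Rightarrow(3)$ and then establish $(4)\Leftrightarrow(1)$ to bring (1) into the equivalence. The implication $(3)\Rightarrow(2)$ is immediate with $p(n)=n$. The equivalence $(3)\Leftrightarrow(4)$ is the standard Arslanov-style result underlying \cite{KMS}: if $A$ is not FPF but some $f\leq_T A$ fails to be size-$n$ i.o.\ traceable, a recursion-theorem diagonalization against the purported trace lets one compute a DNR function from $A$; conversely, if $A$ computes a DNR $g$, any alleged size-$n$ computable trace of $g$ would be refuted at an index produced by the recursion theorem.

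For $(2)\Rightarrow(4)$, I argue contrapositively. Given a DNR $g\leq_T A$ and a computable $p$, I form the block-coded $g^{*}(n)=\langle g(\ell_n), g(\ell_n+1),\ldots,g(\ell_n+p(n))\rangle$ with $\ell_n=\sum_{m<n}(p(m)+1)$; then $g^{*}\leq_T A$, and a size-$p$ i.o.\ computable trace of $g^{*}$ would, by pigeonhole, pin some coordinate of the block to a c.e.\ set of size $\le p(n)$ infinitely often, and a recursion-theorem diagonalization at that coordinate produces a fixed point for $g$, contradicting DNR.

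For $(4)\Rightarrow(1)$: given a weak c.e.\ tree $T$ with $A\in[T]$, let $\sigma_k$ be the $k$th initial segment of $A$ lying in $T$ and set $f(k)=\sigma_k$. Then $f\leq_T A$, so by (3) the function $k\mapsto f(2^k)$ admits a computable $h$ with $|W_{h(n)}|\le n$ and $f(2^n)\in W_{h(n)}$ infinitely often. Let
\[
T'=\{\lambda\}\cup\bigcup_n\bigl(W_{h(n)}\cap T(2^n)\bigr),
\]
where $T(m)$ denotes the strings of level $m$ in $T$. At $\lambda$ the Kraft sum over any prefix-free $\Lambda\subseteq T'$ is at most $\sum_n n\cdot 2^{-2^n}<1$, and at any $\tau\in T'$ of level $2^{n_0}$ in $T$ the Kraft bound in $T_\tau$ is $\le 2^{2^{n_0}}\sum_{n>n_0} n\cdot 2^{-2^n}$, which remains below $1$ uniformly in $n_0$ by the rapid decay of $2^{-2^n}$. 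Thus $T'$ is $T$-thin, and the i.o.\ capture yields $A\in[T']$.

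For $(1)\Rightarrow(4)$ I argue contrapositively, invoking Theorem~\ref{KMS1}: assuming $A$ is FPF, fix $g\leq_T A$ with $C(A\restr g(n))>n$. The idea is to construct a weak c.e.\ tree $T$ with $A\in[T]$ whose level-to-length profile is engineered so that any c.e.\ $T$-thin $T'$ with $A\in[T']$ would, by Kraft's inequality, yield a prefix-free description of $A\restr g(n)$ using only $n+O(1)$ bits, contradicting the KMS lower bound. The main obstacle I anticipate is precisely this final direction: since $g$ is $A$-computable and its exact growth rate is not known a priori, one cannot hard-code the $T$-levels to track $g$, so the construction likely requires a universal-style tree whose level-length mapping interacts with Kraft-extracted descriptions across a wide range of length-scales, together with a careful appeal to the fact that the KMS inequality must be violated at some scale regardless of how fast $g$ grows.
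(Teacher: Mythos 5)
Your directions $(3)\Rightarrow(2)$, $(4)\Rightarrow(3)$ and $(3)\Rightarrow(1)$ are sound. For $(3)\Rightarrow(1)$ your choice of levels $2^n$ (versus the paper's triangular spacing $\Sigma_{i\leq n}2i$) still gives the Kraft bound: at a node of level $2^{n_0}$ the dominant term is $(n_0+1)2^{-2^{n_0}}<1$ and the tail decays doubly exponentially, though you should spell this out rather than appeal to ``rapid decay.''

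There are two genuine gaps. First, your sketches of $(2)\Rightarrow(4)$ and $(3)\Rightarrow(4)$ do not work as stated. You claim that a DNR $g\leq_T A$ can be played off against a size-$n$ i.o.\ trace via a recursion-theorem diagonalization, but DNR only guarantees $g(m)\neq\Psi_m(\emptyset;m)$; a c.e.\ set $W_{h(m)}$ of up to $m$ elements can perfectly well contain $g(m)$ for infinitely many $m$, and the fact that the trace only needs to succeed infinitely often (and you do not know on which $m$) blocks the obvious fixed-point construction. Your block-coding $g^{*}$ likewise pins each coordinate to $\leq p(n)$ candidates, but that is no improvement: there is no pigeonhole here that forces a coordinate down to a single value, and hence no fixed point. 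What is actually needed is the Kolmogorov-complexity strengthening of Theorem~\ref{KMS1}: take $g\leq_T A$ with $C(A\restr g(n))>n$ and let $f(n)$ code $A\restr g(n)$; then membership of $f(n)$ in a c.e.\ set of size $\leq n$ (or $\leq p(n)$) enumerated uniformly in $n$ gives a description of $A\restr g(n)$ of length $O(\log n)$, a contradiction. You cite \cite{KMS} but do not actually deploy it; it is the load-bearing ingredient, not an optional refinement.

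Second, $(1)\Rightarrow(4)$: you explicitly flag that you cannot complete the construction of a ``universal'' weak c.e.\ tree whose level profile tracks the $A$-computable, a priori unknown, growth of $g$, and indeed that route is problematic. The paper avoids it entirely by proving $(1)\Rightarrow(2)$ directly, which is much more elementary: given $f=\Psi(A)$, build the weak c.e.\ tree $T$ whose level-$n$ strings are the minimal $\tau$ with $|\hat\Psi(\tau)|\geq n$; any c.e.\ $T$-thin $T'$ containing $A$ has at most $2^n$ strings at level $n$ (since level-$n$ strings are pairwise incompatible, so Kraft gives $|T'(n)|\cdot 2^{-n}\leq 1$), and reading off $\Psi(\tau)(n)$ for $\tau$ at level $n+1$ yields the trace with $p(n)=2^{n+1}$, independent of $\Psi$. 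Then $(2)\Rightarrow(3)$ is a Terwijn--Zambella-style padding argument, and $(3)\Rightarrow(4)$ is the KMS argument above. Adopting this decomposition closes the cycle cleanly and removes the need for the direction you were stuck on.
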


\begin{proof} (1)$ \Rightarrow $(2)
Given $ A $ which satisfies (1) and $ f=\Psi(A) $ let $ T $ be
defined as follows. The string of level 0 in $ T $ is the empty
string and for every $ n>0 $ the strings of level $ n $ in $ T $ are
those strings $ \tau $ such that $ \hat \Psi(\tau) $ is of length $
n $ and such that this is not the case for any $ \tau'\subset \tau
$. If $ T' $ is c.e. and $ T $-thin with $ A\in  [T'] $ then for
every $ n $ there exist at most $ 2^n $ strings in $ T' $ of level $
n $ in $ T $. Define $ W_{h(n)} $ to be the set of values $
\Psi(\tau)(n) $ for those $ \tau $ in $ T' $ of level $ n+1 $ in $ T
$ and define $ p(n)=2^{n+1} $. Note that $ p $ does not depend upon
$ \Psi $.

(2)$ \Rightarrow $(3) We proceed just as in \cite{TZ} (where the
same argument was made concerning a strengthening of the condition
of c.e. traceability). So suppose that $ A $ satisfies (2) and that
we are given $ f\leq_T A $. We can assume that $ p(0)=0 $ and that
for all $ n $, $ p(n+1)>p(n) $. For every $ n $ let $ k(n) $ be the
greatest $ m $ such that $ p(m)\leq n $. For every $ n $ let $ k'(n)
$ be the least $ m $ such that $ k(m)>n $. Define $ f'(n) $ to be an
effective coding of $ f \upharpoonright k'(n) $ and let $ h $ be
such as to satisfy condition (2) with respect to $ f' $ and $ p $.
Defining $ W_{h'(n)} $ to be the set of values $ \tau(n) $ for those
$ \tau $ whose codes are in $ W_{h(k(n))} $ suffices to show that
(3) is satisfied with respect to $ f $.

(3)$ \Rightarrow $(1) Suppose that $ A $ which satisfies (3) lies on
some weak c.e. tree $ T $. First we define $ T^{\star} $ as follows;
for every $ n $ the strings of level $ n $ in $ T^{\star} $ are the
strings of level $ \Sigma_{i\leq n}2i $ in $ T $. For all $ n $
define $ f(n) $ to be (some effective coding of) the initial segment
of $ A $ which is of level $ n $ in $ T^{\star} $. Let $ h $ be such
as to satisfy (3) with respect to $ f $. We can assume that if $
m\in W_{h(n)} $ then $ m $ codes a string of level $ n$ in $
T^{\star} $. Then $ T' $ which is the empty string together with all
strings whose codes are in  $ \bigcup_n W_{h(n)} $ is $ T $-thin
with $ A\in [T'] $. In order to see this suppose that $ \tau \in T'
$ and let $ \Lambda $ be the strings in $ T' $ which properly extend
$ \tau $. We show that $ \Sigma_{\tau'\in \Lambda}2^{-\vline \tau'
\vline_{T_{\tau}}} < 1 $. Suppose $ \tau $ is of level $ n $ in $
T^{\star} $. For every $ i>0 $ there are at most $ n+i $ strings in
$ T' $ which are of level $ n+i $ in $ T^{\star} $ (and extend $
\tau $) and each such string is of level at least $ 2(n+i) $ in $
T_{\tau} $. Then $ \Sigma_{\tau'\in \Lambda}2^{-\vline \tau'
\vline_{T_{\tau}}} \leq \Sigma_{i>0} (n+i)2^{-2(n+i)} < 1$.

(4)$ \Rightarrow $(3) It is well known that $ A $ is of FPF degree
iff $ A $ computes a DNR function, so if $ A $ is not of FPF degree
then for any $ f\leq_T A$ there exist infinitely many $ n $ with $
\Psi_n(\emptyset;n)\downarrow =f(n) $. For all $ n $ we can
therefore define $ W_{h(n)}= \{ \Psi_n(\emptyset;n) \} $ if $
\Psi_n(\emptyset;n)\downarrow $ and $ W_{h(n)} =\emptyset $
otherwise.

(3)$ \Rightarrow $(4) We suppose we are given $ A $ which is of FPF
degree and which satisfies (3) and then produce a contradiction. In
order to do so we extend an argument provided in \cite{KMS}. By
theorem \ref{KMS1} we may let  $ g\leq_T A $ be such that for all $
n $, $ C(A\upharpoonright g(n))>n $. For all $ n $ define $ f(n) $
to be some effective coding of $ A\upharpoonright g(n) $ and let $ h
$ be witness to the fact that (3) is satisfied with respect to $ f
$. There exists $ c $ such that, for all $ n $ with $ f(n)\in
W_{h(n)} $ we have $ C(A\upharpoonright g(n))\leq 3\ln n\ +c $ which
gives the required contradiction. In order to see this observe that
in order to specify $ f(n) $ (and so $ A\upharpoonright g(n)$)
whenever $ f(n)\in W_{h(n)} $ all we need is a string of the form $
\tau_0\tau_1 $ where in order to form $ \tau_0 $ we write $ n $ in
binary notation and then put a 0 after each bit except the last
after which we put a 1 (so that one can see where the coding of $ n
$ finishes and the coding of the position of $ f(n) $ within $
W_{h(n)} $ starts), and where in order to form $ \tau_1 $ we just
write $ m $ in binary notation where $ f(n) $ is the $ m^{th} $
element enumerated into $ W_{h(n)} $.

\end{proof}

\begin{theo} \label{favthe} Every $ \bf{0} $-dominated degree which is not FPF has
a strong minimal cover.
\end{theo}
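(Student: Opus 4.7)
My plan is to construct $B$ with $A\leq_T B$ as an infinite path through a nested sequence of c.e.\ trees, satisfying for every Turing functional $\Psi_i$ the requirement $\mathcal{N}_i$: if $\Psi_i(B)$ is total and $\Psi_i(B)\not\leq_T A$, then $B\leq_T \Psi_i(B)$. Together with a coding requirement ensuring $A\leq_T B$, meeting each $\mathcal{N}_i$ will guarantee that every degree strictly below $deg(B)$ lies at or below $deg(A)$, so $deg(B)$ is a strong minimal cover of $deg(A)$. The two main tools will be the $\bf{0}$-dominated assumption, which forces every function reducible to $A$ to be dominated by a computable function (and so makes all $A$-reductions effectively truth-table), and Theorem~\ref{equiv}(1), whose $T$-thin property provides, via non-FPF, that any weak c.e.\ tree on which $A$ lies admits a c.e.\ subtree---thin in the Kraft sense---on which $A$ still lies.

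I would use a Spector-style forcing whose conditions are c.e.\ (not merely $A$-c.e.) bounded-branching trees on $2^{<\omega}$. Starting from a c.e.\ thin tree on which $A$ lies (obtained from the trivial weak c.e.\ tree of initial segments of $A$ by the $T$-thin property), at the stage handling $\mathcal{N}_i$ I first try to refine the current condition to a subtree on which $\Psi_i$-splittings are absent. On such a subtree $\Psi_i(B)$ would be computable from $A$ by $A$-uniform descent, satisfying $\mathcal{N}_i$ via $\Psi_i(B)\leq_T A$. If this attempt fails, $\Psi_i$-splittings are dense on the current condition, and I extract a $\Psi_i$-splitting subtree with, say, binary branching. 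Here the $T$-thin property is invoked again to upgrade this $A$-c.e.\ splitting tree to a $\emptyset$-c.e.\ one still containing $B$'s approximation. Since the resulting tree is c.e.\ and has bounded branching, $\Psi_i(B)$ alone suffices to navigate it by matching $\Psi_i$-values against the finitely many candidate children at each node, yielding $B\leq_T\Psi_i(B)$.

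The principal obstacle is reconciling two tensions. First, the thinning that upgrades $A$-c.e.\ trees to $\emptyset$-c.e.\ ones may destroy the very $\Psi_i$-splittings needed. I expect to handle this by constructing the c.e.\ thin tree with built-in provisions for future splittings---enumerating candidate splitting extensions at each node in parallel with the thinning---so that at least one splitting pair is preserved per node. Second, the coding of $A$ into $B$ must not interfere with the minimality strategies; I will reserve a sparse, computable sequence of levels for coding and arrange that $\Psi_i$-splits occur strictly between consecutive coding levels. $\bf{0}$-dominance is the bridge throughout: the use of $A$ in any subsidiary computation is bounded by a computable function, so the finitely many possibilities at each stage can be absorbed into the c.e.\ enumeration defining the next condition. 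Verification is then routine: $A\leq_T B$ by coding, $A<_T B$ by the genericity of $B$ over the forcing, and each $\mathcal{N}_i$ is met by one of the three outcomes above, placing $deg(\Psi_i(B))$ either at $\bf{0}$ (if $\Psi_i(B)$ is partial or computable), below $deg(A)$, or at $deg(B)$.
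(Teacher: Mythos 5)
Your high-level architecture (Spector forcing, split vs.\ non-split dichotomy, coding $A$ into $B$) matches the paper's, but there is a genuine gap exactly where the non-FPF hypothesis must do its work. The crux of the construction is achieving $B\leq_T\Psi_i(B)$, not merely $B\leq_T\Psi_i(B)\oplus A$. When $\Psi_i$-splittings are dense on the current condition, the splitting subtree you can actually extract is $A$-computable (deciding membership requires $A$, since the conditions are $A$-computable or built by an $A$-dependent search), so reading $B$ off from $\Psi_i(B)$ still requires an $A$-oracle. Your fix is to ``upgrade this $A$-c.e.\ splitting tree to a $\emptyset$-c.e.\ one still containing $B$'s approximation'' via the $T$-thin property. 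This is a category error: the thin property (Theorem~\ref{equiv}(1)) concerns weak c.e.\ trees on which \emph{$A$} lies and gives a c.e.\ thin subtree on which $A$ still lies; it says nothing about trees of candidate initial segments of $B$, and it does not turn an $A$-computable splitting tree into a computable one while preserving either the splitting structure or $B$'s membership. There is no known mechanism to make the splitting subtree computable here, and for most non-trivial $A$ one should not expect one.

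The paper sidesteps this entirely. It never makes the splitting tree c.e.; instead it proves that $A$ has the property $(\dagger)$: every $A$-computable 2-branching tree $T$ has an $A$-computable 2-branching subtree $T'$ all of whose paths compute $A$. Given the $\Psi_i$-splitting subtree $T'_s$, Lemma~\ref{trelem1} transports $(\dagger)$ across the image under $\Psi_i$ to obtain a further refinement $T_{s+1}$ with the property that any path $B$ satisfies $A\leq_T\Psi_i(B)$; then $B\leq_T\Psi_i(B)\oplus A \leq_T \Psi_i(B)$ falls out. The non-FPF hypothesis (via thinness) and $\bf{0}$-dominance are used solely to \emph{prove} $(\dagger)$: one builds a weak c.e.\ tree $\Pi$ indexing approximations to $T$, thins it using non-FPF, and uses the computable majorant supplied by $\bf{0}$-dominance to bound string lengths so that the combinatorics of choosing pairwise incompatible extensions goes through. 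Your sparse-level coding scheme is also unnecessary once you have $(\dagger)$---the coding of $A$ into $B$ is automatic because every path of the final trees computes $A$---and it is not clear it can coexist with the bounded-branching and splitting demands. To repair your proposal you would need to replace the ``upgrade to $\emptyset$-c.e.'' step with an argument that the subtree you pass to can be chosen so that its paths (or the $\Psi_i$-images of its paths) uniformly compute $A$, which is precisely the content of $(\dagger)$.
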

\begin{proof} See sections 5 and 6.
\end{proof}

Of course, the question as to whether or not there exists a minimal
degree which is FPF was another longstanding question concerning
minimal degrees and it is interesting that, at least where the $
\bf{0} $-dominated degrees are concerned, these two questions now
seem to be related. In an unpublished paper \cite{MK}  Kumabe has
constructed a FPF minimal degree.

\begin{theo} \label{splittree}
If $ A $ satisfies the splitting-tree property then $ A $ is not of
FPF degree.
\end{theo}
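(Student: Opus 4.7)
The plan is to establish condition (1) of Theorem \ref{equiv} for $A$; since (1)--(4) of that theorem are equivalent, this will yield (4), that $A$ is not of FPF degree.

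I will fix an arbitrary weak c.e.\ tree $T$ with $A\in[T]$ and funnel the splitting-tree property through a functional tailored to $T$. Define $\Psi$ by letting $\Psi(\sigma;k)$ output (a code of) the initial segment of $\sigma$ of level $k+1$ in $T$, waiting long enough on the enumeration of $T$ to see it, and remain undefined otherwise. Then $\Psi(A)$ is total because $A\in[T]$, and $A\leq_T\Psi(A)$ by concatenating the decoded initial segments, so the splitting-tree property will supply a c.e.\ $\Psi$-splitting tree $S$ with $A\in[S]$. The point of this choice of $\Psi$ is that two incompatible $\sigma_1,\sigma_2\in S$ must have $\Psi$-incompatible values, which unwinds into the statement that the $T$-paths traced by $\sigma_1$ and $\sigma_2$ diverge at some level of $T$.

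From $S$ I will construct $T'$ as a c.e.\ subset of $T$ containing $A$ as a path. A first attempt is $T'=\{\tau\in T:(\exists\sigma\in S)\ \tau\subseteq\sigma\}$; this is plainly c.e., and $A\in[T']$ since $A\in[S]\cap[T]$. To establish $T$-thinness, given $\tau\in T'$ of level $n_0$ and a prefix-free $\Lambda\subseteq T'_\tau$, I pick $\sigma(\tau')\in S$ above each $\tau'\in\Lambda$; these are pairwise incompatible in $S$, hence $\Psi$-incompatible, so the $T$-paths they code pairwise diverge. I then aim to match each divergence to a factor of $2^{-1}$ in a Kraft-style estimate that yields $\sum_{\tau'\in\Lambda}2^{-|\tau'|_{T_\tau}}\leq 1$.

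The hard part will be that level-in-$T$ and binary length do not coincide, so Kraft applied directly to the binary lengths of the $\sigma(\tau')$ goes in the wrong direction. To overcome this I plan to refine the definition of $T'$ so that $\tau\in T$ enters only after one observes sufficiently many pairwise-splitting extensions in $S$ to witness the full branching of $T$ above $\tau$; each additional branch of $T'$ above $\tau$ will thereby be paid for by an $S$-splitting contributing the correct Kraft weight. Keeping the refined $T'$ computably enumerable while ensuring $A$ remains a path of $T'$---that is, that the appropriate $S$-splittings along $A$'s path always eventually appear---is the principal technical point; once it is handled, condition (1) of Theorem \ref{equiv} is verified and (4) follows.
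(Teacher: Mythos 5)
Your choice of $\Psi$ creates the gap you flag at the end, and that gap is real, not a bookkeeping nuisance. With $\Psi(\sigma;k)$ outputting a single integer coding the level-$(k+1)$ initial segment, two incompatible $\sigma_1,\sigma_2\in S$ give a disagreement $\Psi(\sigma_1;\ell)\neq\Psi(\sigma_2;\ell)$ between two natural numbers at one position; this records that the $T$-paths diverge somewhere, but it carries no binary-tree structure. In particular, when $T$ is bushy (which a weak c.e.\ tree is free to be) a divergence at level $\ell$ does not cost a factor of $2^{-1}$: a single level of $T$ may split into arbitrarily many children, and all of those may be traced out by distinct, pairwise $\Psi$-splitting elements of $S$. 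So the Kraft-style estimate $\Sigma_{\tau'\in\Lambda}2^{-|\tau'|_{T_\tau}}\leq 1$ does not follow, and your first-attempt $T'$ is in general not $T$-thin. The proposed repair---withholding $\tau$ from $T'$ until enough $S$-splittings have appeared to pay for the branching, while keeping $A\in[T']$---is exactly the nontrivial part, and the proposal neither shows that those splittings must occur along $A$ nor bounds their cost; as written the argument has a hole.

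The paper's proof uses the same strategy (verify condition (1) of Theorem \ref{equiv}) but gets the arithmetic for free by a better choice of $\Psi$: for each $\tau$ of level $n$ in $T$ enumerate the axiom $\Psi(\tau)=\tau\restr n$. Then $\Psi(A)=A$, so the splitting-tree property yields a c.e.\ $\Psi$-splitting $T'$ with $A\in[T']$, and one may take $T'\subseteq T$ with $\lambda\in T'$. For $\tau\in T'$ of level $m$ in $T$ and prefix-free $\Lambda\subseteq T'_\tau$, each $\tau'\in\Lambda$ of level $m+k$ in $T$ maps to the binary string $\Psi(\tau')=\tau'\restr(m+k)$ of length $m+k$ extending $\Psi(\tau)=\tau\restr m$; these images are pairwise incompatible because $T'$ is $\Psi$-splitting, so Kraft's inequality for binary strings above $\tau\restr m$ gives $\Sigma_{\tau'\in\Lambda}2^{-k}\leq 1$, which is precisely $T$-thinness. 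The whole point is to engineer $\Psi$ so that $|\Psi(\tau)|$ equals the level of $\tau$ in $T$: the exponent in the thinness sum then literally becomes a binary string length, and Kraft applies verbatim, with no auxiliary accounting and no dependence on how bushy $T$ is.
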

\begin{proof} Suppose that $ A $ satisfies the splitting-tree property and
for some weak c.e. tree $ T $ we have that $ A\in [T] $. We define $
\Psi $ by enumerating axioms as follows: for every string $ \tau $
of level $ n $ in $ T $ we enumerate the axiom $
\Psi(\tau)=\tau\upharpoonright n $. Let $ T' $ be a c.e. $ \Psi
$-splitting tree such that $ A\in [T'] $. We can assume that $
T'\subseteq T $ and $ \lambda \in T' $. Then $ T' $ is c.e. and $ T
$-thin.
\end{proof}

Let us consider for a moment exactly what theorem \ref{splittree}
means. It is certainly the case that we may interpret this theorem
in a \emph{constructive} sense. Theorem \ref{splittree} tells us,
for example, that
 there exist $ \bf{0} $-dominated degrees which are not c.e. traceable
 and not FPF. This can be seen through an analysis of Gabay's proof
 that there exists a minimal degree with strong minimal cover and
 which is not c.e. traceable. The techniques developed suffice
 to give a set of minimal degree which is not c.e. traceable and which satisfies
  the perfect splitting tree property. Comments made
 in the introduction to this paper then suffice to show that this
 degree is $ \bf{0} $-dominated and theorem
 \ref{splittree} suffices to show that it is not FPF.
 One might also try to interpret theorem \ref{splittree}, though, as saying
 that the standard splitting tree technique cannot be used in order
 to construct a minimal degree which is  FPF, and that in order to
 do  so the use of delayed splitting trees is necessary. Of course, the
 functional $ \Psi $ defined in the proof of the theorem is so
 trivial that this case has not yet been made.
If it is the case that \emph{whenever} $ A\leq_T \Psi(A) $, $ A $
lies on a c.e. $ \Psi $-splitting tree then $ A $ is not of FPF
degree, but one might suppose that it is possible to proceed using
standard splitting trees while ignoring certain $ \Psi $ when for
some reason it will  obviously not be problematic to do so. It is
interesting to observe, anyway, that in constructing a minimal
degree which is FPF, Kumabe uses delayed splitting c.e. trees. In
light of theorem \ref{favthe} it seems reasonable to suggest that
delayed splitting c.e. trees are likely to be necessary in the
construction of a minimal degree with no strong minimal cover---or,
at least, in constructing such a degree which is also $ \bf{0}
$-dominated.

 \vspace{4pt} Since no 1-generic is FPF it follows from
theorem \ref{favthe} that any $ \bf{0} $-dominated degree which is
bounded by a 1-generic has strong minimal cover.  It therefore seems
of relevance to know whether there exist non-trivial examples of
such degrees. The following definition is due to Chong and Downey.

\begin{defin}  $ T\subseteq 2^{<\omega}  $   is said
to be $ \Sigma_1 $ \textbf{dense} in $ A $ if:

\begin{itemize}
\item no element of $ T $ is an initial segment of $ A $,

\item for any c.e. $ T'\subseteq 2^{<\omega}  $ such that $ A \in [D(T')] $, some
member of  $ T' $ extends a member of $ T $.
\end{itemize}
\end{defin}

Chong and Downey \cite{CD1}, \cite{CD2} have shown that any set $ A
$ is computable in a 1-generic iff there is no c.e. set of strings $
T $ which is $ \Sigma_1 $ dense in $ A $.

\vspace{4pt} Using this characterization they were able to show that
there is a minimal degree below $ \bf{0}'$ which is  bounded by a
1-generic below $ \bf{0}'' $, and also that there is a minimal
degree below $ \bf{0}' $ which is not bounded by any 1-generic. The
following theorem has also been proved independently in a joint
paper by
 Downey and Yu \cite{DY}.

\begin{theo} There are
$ \bf{0} $-dominated degrees which are not bounded by any 1-generic
and $ \bf{0} $-dominated degrees ($\neq \bf{0} $) which are bounded
by a 1-generic degree.
\end{theo}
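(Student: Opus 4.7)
The plan is to invoke the Chong and Downey characterization cited above: $ A $ is computable in some 1-generic iff no c.e. set of strings is $ \Sigma_1 $ dense in $ A $. Both existence claims are then proved by the standard Spector perfect-tree forcing construction of a $ \bf{0} $-dominated minimal degree, supplemented with appropriate extra requirements.

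\vspace{4pt}
For the \emph{not bounded} half I would run the Spector construction, producing a nested sequence of perfect computable trees $ T_0 \supseteq T_1 \supseteq \cdots $ with $ A\in \bigcap_e [T_e] $ of $ \bf{0} $-dominated minimal degree; simultaneously I would c.e.-enumerate a set of strings $ T $ aimed to be $ \Sigma_1 $ dense in $ A $. At stage $ e $, while shrinking $ T_e $ to $ T_{e+1} $, many strings of $ T_e $ become discarded; among these I would select pairwise incompatible strings lying off the path being committed to, enumerating enough of them into $ T $ to diagonalize against the $ e $-th candidate c.e. set $ T' $, in the sense that if ever $ A\in [D(T')] $ then some member of $ T' $ will be forced to extend a string already in $ T $. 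A standard priority schedule handles all candidate $ T' $ simultaneously.

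\vspace{4pt}
For the \emph{bounded} half I would again run the Spector construction, but also meet the requirements $ \mathcal{R}_e $: the $ e $-th c.e. set $ W_e $ is not $ \Sigma_1 $ dense in $ A $. To satisfy $ \mathcal{R}_e $ I would check whether there is a node $ \tau \in T_e $ no extension of which in $ T_e $ extends any member of $ W_e $. If there is such a $ \tau $, I would thin $ T_e $ to its c.e. (indeed computable) subtree of extensions of $ \tau $ together with their initial segments; this subtree is perfect above $ \tau $, contains no string extending any member of $ W_e $, and so serves as the c.e. witness for clause (ii) of the $ \Sigma_1 $-density definition. Otherwise every node of $ T_e $ has an extension in $ T_e $ which itself extends some $ W_e $-string, so along every path through $ T_e $ infinitely many $ W_e $-members appear as initial segments, automatically violating clause (i). In either case one then applies Spector's subtree operation on top of the thinned tree to pass to $ T_{e+1} $.

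\vspace{4pt}
The main obstacle in both directions is that Spector's subtree operation is non-effective, so the extra bookkeeping---c.e.\ enumeration of $ T $ in the first construction, $ W_e $-avoidance in the second---must be shown to survive it. Perfectness of each $ T_e $ however provides exponential room at every level, so the additional enumerations and thinnings can always be fitted in without disturbing the $ \bf{0} $-dominated minimal-degree requirements, and the overall construction has arithmetic complexity comparable to the standard Spector construction.
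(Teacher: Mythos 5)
Your proof of the second half (existence of a nonzero $\mathbf{0}$-dominated degree bounded by a 1-generic) matches the paper's approach essentially exactly: a Spector perfect-tree construction with additional requirements guaranteeing that no c.e. set of strings is $\Sigma_1$ dense in $A$, handling the $e$-th candidate by asking whether some extension in the current tree extends a member of it and acting accordingly. One slip: in your second case you assert that if every node of $T_e$ has an extension in $T_e$ extending a $W_e$-string, then \emph{every} path through $T_e$ meets $W_e$ infinitely often---this is false (a path can steer away from such extensions). The correct move, as in the paper, is to actively commit $A$ to pass through such a $W_e$-string, which is possible since you are building $A$; this gives clause (i) directly.

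Your proof of the first half has a genuine gap. You propose to run a Spector construction and \emph{simultaneously c.e.-enumerate} a set $T$ that will be $\Sigma_1$ dense in $A$. But the Spector construction of a $\mathbf{0}$-dominated minimal degree is a $\mathbf{0}''$-effective construction (the questions ``is there a splitting above $\tau$'' are $\Sigma^0_2$), and the strings you wish to put into $T$ are selected \emph{after} you learn which branches are discarded---information that depends on those $\mathbf{0}''$-answers. The resulting $T$ is therefore a priori only $\mathbf{0}''$-c.e., not c.e., and the appeal to ``exponential room'' in the perfect trees does nothing to repair this: room is not the obstacle, effectiveness of the enumeration is. Moreover, the argument that your enumerated strings will ``force'' any $T'$ with $A\in[D(T')]$ to extend a member of $T$ is not substantiated; ensuring this universal clause of $\Sigma_1$-density is the whole difficulty in Chong and Downey's work and is not achieved by inserting a few strings per stage. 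The paper sidesteps all of this with a two-line argument: there exist $\mathbf{0}$-dominated FPF degrees (apply the $\mathbf{0}$-dominated basis theorem to a nonempty $\Pi^0_1$ class of DNR functions), FPF is upward closed under $\leq_T$, and no 1-generic is FPF---so no FPF degree is bounded by a 1-generic. You do not need minimality, and you do not need the Chong--Downey characterization at all for this direction.
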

\begin{proof}   That there
exist $ \bf{0} $-dominated degrees which are not bounded by any
1-generic follows from the fact that there exist $ \bf{0}
$-dominated degrees which are FPF.  In order to show that there
exists a $ \bf{0} $-dominated degree ($\neq \bf{0} $) which is
bounded by a 1-generic we may proceed almost exactly as in
\cite{CD1} in order to construct $ A $ of $ \bf{0} $-dominated
minimal degree such that there is no c.e. set of strings $ T $ which
is $ \Sigma_1 $ dense in $ A $.  We construct a set $ A $ of minimal
degree below $ \bf{0}'' $ which lies on perfect splitting trees. At
each stage $ s+1 $, having defined $\tau \supseteq  A_s $ of which $
A_{s+1} $ will be an extension and a tree $ T_{s+1} $ which
satisfies the property that if $ A\in [T_{s+1}] $ then the $ s^{th}
$ minimality requirement will be satisfied, we then ask whether
there exists $ \tau' \in \Pi_s $---the $ s^{th} $ c.e. set of
strings---which is extended by a string in $ T_{s+1} $ extending $
\tau $. If so then we may define $ A_{s+1} $ so as to extend such $
\tau' $ and otherwise the strings in $ T_{s+1} $ extending $ \tau $
are a c.e. set of strings which is witness to the fact that $ \Pi_s
$ is not $ \Sigma_1 $ dense in $ A $.
\end{proof}

We close this section by observing that another technique of minimal
degree construction always produces minimal degrees with a strong
minimal cover.

\begin{defin}
For any $ \Pi\subseteq 2^{<\omega} $ we define $ B([\Pi]) $, the
\textbf{Cantor-Bendixson derivative of $ [\Pi] $}, to be the set of
non-isolated points of $ [\Pi] $ according to the Cantor topology.
The \textbf{ iterated Cantor-Bendixson derivative $
B^{\alpha}([\Pi]) $} is defined for all ordinals $ \alpha $ by the
following transfinite induction. $ B^0([\Pi])=[\Pi] $, $ B^{\alpha
+1}([\Pi])=B(B^{\alpha}([\Pi])) $ and $ B^{\lambda}([\Pi]) =
\bigcap_{\alpha<\lambda} B^{\alpha}([\Pi]) $ for any limit ordinal $
\lambda $.
\end{defin}

\begin{defin}
A set $ A $ has \textbf{Cantor-Bendixson rank $ \alpha $} if $
\alpha $ is the least ordinal such that for some $ \Pi_1^0 $ class $
[\Pi] $, $ A\in B^{\alpha}([\Pi])-B^{\alpha+1}([\Pi]) $.
\end{defin}

\begin{theo}[Cenzer, Smith \cite{CS}] \label{CS}  If $ B\leq_{tt} A $ and $ A $ has rank $
n  $ then $ B $ has rank $ m\leq n $.
\end{theo}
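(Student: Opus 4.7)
The plan is to reduce to the case where $A$ is the only point of top CB rank in some $\Pi^0_1$ class and then push everything forward through the tt-functional. First I would fix a truth-table functional $\Phi$ computing $B$ from $A$, so that $\Phi$ is total and continuous on $2^{\omega}$, with a computable use bound $u$. Let $[\Pi]$ be a $\Pi^0_1$ class realising the rank of $A$, so $A\in B^n([\Pi])-B^{n+1}([\Pi])$. Since $A$ is isolated in $B^n([\Pi])$ there exists $\rho\subset A$ with $[\rho]\cap B^n([\Pi])=\{A\}$. As $[\rho]$ is clopen, a routine transfinite induction shows $B^{\alpha}([\Pi]\cap[\rho])=B^{\alpha}([\Pi])\cap[\rho]$ for every $\alpha$, and $[\Pi]\cap[\rho]$ is itself a $\Pi^0_1$ class. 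So after replacing $[\Pi]$ by $[\Pi]\cap[\rho]$ we may assume $B^n([\Pi])=\{A\}$.

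Next I would realise $[\Pi']:=\Phi([\Pi])$ as a $\Pi^0_1$ class. Take $\Pi'$ to be the downwards closed set of those $\tau$ for which some $\sigma\in\Pi$ with $|\sigma|=u(|\tau|)$ satisfies $\Phi(\sigma)\supseteq\tau$. The search for $\sigma$ ranges over the computable finite set of elements of $\Pi$ of length $u(|\tau|)$, so $\Pi'$ is a computable tree, and K\"onig's lemma gives $[\Pi']=\Phi([\Pi])$. In particular $B=\Phi(A)\in[\Pi']$.

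The heart of the argument is the purely topological claim that if $f:K\to K'$ is a continuous surjection between non-empty compact Hausdorff spaces then $B^{\alpha}(K')\subseteq f(B^{\alpha}(K))$ for every ordinal $\alpha$. I would prove this by transfinite induction. The base and limit cases are immediate; the successor step is the main point. Given $y\in B^{\alpha+1}(K')$, pick $y_k\to y$ with $y_k\in B^{\alpha}(K')$ and $y_k\neq y$; by the inductive hypothesis each $y_k=f(x_k)$ with $x_k\in B^{\alpha}(K)$. Extract a convergent subsequence $x_{k_j}\to x$; then $x\in B^{\alpha}(K)$ since this set is closed, and $f(x)=y$ by continuity. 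Since $f(x_{k_j})=y_{k_j}\neq y=f(x)$, infinitely many $x_{k_j}$ are distinct from $x$, so $x\in B^{\alpha+1}(K)$.

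Applying this lemma with $f$ the restriction of $\Phi$ to $[\Pi]$ yields $B^n([\Pi'])\subseteq \Phi(B^n([\Pi]))=\Phi(\{A\})=\{B\}$, whence $B^{n+1}([\Pi'])\subseteq B(\{B\})=\emptyset$. Thus $B\in[\Pi']-B^{n+1}([\Pi'])$ and the rank of $B$ is at most $n$, as required. I expect the only real obstacle to be the topological lemma: although classical, the successor step needs care to ensure infinitely many $x_{k_j}$ really differ from their limit $x$, and it is precisely the possible non-injectivity of $\Phi$ that would threaten to spoil this---but here it is saved by the fact that the $y_k$ are all distinct from $y$, which forces the corresponding $x_{k_j}$ to differ from $x$ under $f$.
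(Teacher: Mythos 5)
The paper states this theorem as a citation to Cenzer and Smith and does not supply a proof of its own, so there is no in-paper argument to compare against; I will instead assess your proposal on its own terms. Your route is the natural one and is correct: view the tt-reduction as a total continuous $\Phi:2^{\omega}\to 2^{\omega}$ with computable modulus, cut down the $\Pi^0_1$ class witnessing $\mathrm{rk}(A)=n$ by the clopen neighbourhood $[\rho]$ so that $B^n([\Pi])=\{A\}$ (this reduction is essential, and the identity $B^{\alpha}(X\cap[\rho])=B^{\alpha}(X)\cap[\rho]$ for clopen $[\rho]$ does go through by an easy transfinite induction), push $[\Pi]$ forward to the $\Pi^0_1$ class $[\Pi']=\Phi([\Pi])$, and invoke the topological lemma $B^{\alpha}(K')\subseteq f(B^{\alpha}(K))$ to get $B^{n}([\Pi'])\subseteq\{B\}$ and hence $B^{n+1}([\Pi'])=\varnothing$. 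Two small points worth tightening. In the successor step of the lemma you should record that the $y_k$ may be taken pairwise distinct (a sequence in a Hausdorff space converging to $y$ with every term $\neq y$ must take infinitely many values), since this is what forces the $x_{k_j}$ to be pairwise distinct, hence genuinely accumulating at $x$. And the limit case, though you call it immediate, actually uses compactness: one must observe that the sets $f^{-1}(\{y\})\cap B^{\alpha}(K)$ form a decreasing family of non-empty closed sets and so have non-empty intersection. Since the theorem as stated concerns finite rank $n$, the limit case never arises here, but it should be argued correctly if the lemma is stated for all ordinals.
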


\begin{theo}[Owings \cite{JO}] \label{Ow}  If $ rk(B)=rk(A\oplus B) $ then $ A\leq_T B $.
\end{theo}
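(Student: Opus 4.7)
My plan is to give a short direct proof combining the hypothesis with the monotonicity of Cantor--Bendixson rank under $ \leq_{tt} $ from Theorem \ref{CS}. Write $ \alpha = rk(A \oplus B) = rk(B) $ and fix a $ \Pi^0_1 $ class $ P $ witnessing the rank of $ A \oplus B $, so that $ A \oplus B \in B^{\alpha}(P) \setminus B^{\alpha+1}(P) $. Since $ A \oplus B $ is then isolated in $ B^{\alpha}(P) $, I would pick a finite initial segment $ \tau \subset A \oplus B $ with $ B^{\alpha}(P) \cap [\tau] = \{ A \oplus B \} $ and work inside the $ \Pi^0_1 $ class $ P' := P \cap [\tau] $.

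The heart of the argument is to show that the $ B $-section $ S_B := \{ X : X \oplus B \in P' \} $ is exactly the singleton $ \{ A \} $. For suppose $ X \oplus B \in P' $ with $ X \neq A $. Then $ X \oplus B $ lies in $ [\tau] $ and differs from $ A \oplus B $, and so is not in $ B^{\alpha}(P) $; thus there is some $ \beta < \alpha $ with $ X \oplus B \in B^{\beta}(P) \setminus B^{\beta+1}(P) $, which gives $ rk(X \oplus B) \leq \beta < \alpha $. On the other hand $ B \leq_{tt} X \oplus B $, so Theorem \ref{CS} gives $ rk(X \oplus B) \geq rk(B) = \alpha $, a contradiction.

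Hence $ S_B = \{ A \} $. But $ S_B $ is the set of infinite branches of a $ B $-computable tree obtained from a computable tree for $ P' $ by substituting $ B $ on the odd coordinates, so it is a $ \Pi^0_1 $ class relative to $ B $. Since a singleton such class has its unique member $ B $-computable---by compactness, at each length $ n $ all but one candidate eventually dies---we conclude $ A \leq_T B $.

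I do not foresee a serious obstacle; all of the ingredients are already on the table. The one thing worth double-checking is that Theorem \ref{CS} applies at the arbitrary ordinal rank $ \alpha $ appearing here, not just to finite ranks, but the underlying tt-reduction argument extends by straightforward transfinite induction on the derivative. The real content of the proof is the observation that isolation of $ A \oplus B $ at level $ \alpha $, coupled with rank monotonicity under $ \leq_{tt} $, leaves no room in the $ B $-section of the isolating neighborhood for any competitor to $ A $; once this is noticed the theorem is immediate.
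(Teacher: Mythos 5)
The paper does not give its own proof of this theorem; it is quoted from Owings \cite{JO} and used as a black box, so there is nothing in the text to compare your argument against. Taken on its own terms, your proof is correct. Isolating $A \oplus B$ at level $\alpha$ via a cylinder $[\tau]$, any competing $X \oplus B \in P \cap [\tau]$ with $X \neq A$ must leave $B^\alpha(P)$; since the set of ordinals $\gamma$ with $X \oplus B \in B^\gamma(P)$ is a nonempty initial segment closed under limits (by the definition of the derivative at limit stages) and not containing $\alpha$, it has a greatest element $\beta < \alpha$, so $X \oplus B$ is ranked with $rk(X \oplus B) \leq \beta < \alpha$. As $B \leq_{tt} X \oplus B$ always, Theorem \ref{CS} would then force $rk(B) \leq \beta < \alpha$, contradicting $rk(B)=\alpha$. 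Hence the $B$-section of $P \cap [\tau]$ is the singleton $\{A\}$, a $\Pi^0_1(B)$ subclass of the compact space $2^{\omega}$, and the unique member of a singleton $\Pi^0_1(B)$ class is computable from $B$. The one point you flagged for double-checking is indeed harmless: the Cenzer--Smith monotonicity of rank under truth-table reducibility holds at all recursive ordinal levels, not just finite ones, so the appeal to Theorem \ref{CS} at the ordinal $\alpha$ is legitimate.
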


\begin{theo}[Downey \cite{RD}] \label{BS}  There exists a set of $ \bf{0} $-dominated degree
and which is of rank one.
\end{theo}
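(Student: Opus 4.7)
The target is a set $A$ of $\mathbf{0}$-dominated (hyperimmune-free) degree that lies in some $\Pi^0_1$ class $[T]$ at rank $1$. By definition this means $A\in B([T])-B^2([T])$, i.e., $A$ is non-isolated in $[T]$ but isolated in $B([T])$. In particular, if I can produce $[T]$ with $B([T])=\{A\}$, then $B^2([T])=\emptyset$ and the condition reduces to: $A$ is the unique non-isolated point of $[T]$. For the rank to be exactly $1$ (and not $0$), $A$ must further be non-computable, since rank-$0$ members of $\Pi^0_1$ classes are isolated and hence computable.

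The plan is to construct a computable binary tree $T$ such that $[T]$ takes the form $\{A\}\cup\{y_s:s\in\omega\}$, where each $y_s$ is an isolated eventually-zero (hence computable) path, the $y_s$ converge to $A$ in the Cantor topology, and $A$ itself is non-computable and hyperimmune-free. I would secure the hyperimmune-freeness of $A$ by running the standard basis theorem argument of Jockusch and Soare inside the construction: maintain a nested sequence of non-empty $\Pi^0_1$ subclasses $P_0\supseteq P_1\supseteq\cdots$ with $A\in\bigcap_s P_s$, and at stage $e$ refine $P_{e+1}$ to force either $\Psi_e$ to be partial on every member, or $\Psi_e$ to be total on every member (in which case, by compactness of the binary tree, the function $n\mapsto\max\{\Psi_e(\tau;n):\tau\in T,|\tau|=k_n\}$ is a computable dominating function, for suitable $k_n$). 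Non-computability of $A$ is arranged by standard diagonalization against the computable $\varphi_e$.

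The hard part will be a tension between the rank-$1$ structure and the hyperimmune-free construction. On the one hand, for $[T]$ to have the rank-$1$ structure with $B([T])=\{A\}$ and infinitely many side branches $y_s$ clustering at $A$, one naturally wants to add the $y_s$ to $T$ explicitly. On the other hand, if the $y_s$ together with their isolating cylinders were uniformly enumerable, then removing them from $[T]$ would leave a $\Pi^0_1$ singleton $\{A\}$; but $\Pi^0_1$ singletons in $2^\omega$ are computable (a singleton's only point is isolated and hence computable), contradicting the non-computability of $A$. I would therefore need to build the side branches into $T$ only implicitly, so that they appear as infinite paths in $[T]$ but are not effectively identifiable as a list of paths with computable isolation witnesses. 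A natural device is to defer extensions of $T$ along side-branch nodes based on the Jockusch and Soare forcing decisions, so that membership in $T$ is effectively decidable while the division into spine versus side branches only emerges $\Pi^0_2$-ly. Verifying that this can be done compatibly with the hyperimmune-free forcing and the diagonalization requirements is the main technical content of the proof.
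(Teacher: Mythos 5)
The statement you are proving is attributed in the paper to Downey \cite{RD} and is cited without proof, so there is no in-text argument to compare yours against; I am assessing the proposal on its own terms.

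Your high-level reading of the problem is correct: rank $1$ amounts to being the unique non-isolated point of some $\Pi^0_1$ class while being noncomputable, and you have correctly put your finger on the central tension, namely that if the isolated side branches and their isolating cylinders were uniformly effective then $A$ would sit inside a $\Pi^0_1$ singleton and hence be computable. But the proposal stops precisely at the point where the proof begins. Two concrete gaps. First, the ``naive'' instantiation of your plan --- a spine $A$ together with, at each level $n$, the sibling $(A\restr n)^\frown(1-A(n))$ extended by $0^\omega$ --- does not even yield a computable tree unless $A$ is computable: from the finitely many strings of length $n+1$ in $T$ one reads off the unique missing string $(A\restr n)^\frown(1-A(n))^\frown 1$ and hence $A\restr(n+1)$, so $T\equiv_T A$. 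Thus ``eventually-zero side branches hanging off the spine'' is not merely in tension with noncomputability, it is outright inconsistent with it, and the remedy you gesture at (``build the side branches in only implicitly'') is exactly the nontrivial content you would need to supply. Second, the Jockusch--Soare forcing decisions (``is there $n$ with $P_e\cap\{X:\Psi_e(X;n)\uparrow\}$ nonempty?'') are $\Sigma^0_2$, so the nested sequence $P_0\supseteq P_1\supseteq\cdots$ is not effectively given, and you cannot simply read off a single computable tree $T$ from it. Reconciling these $\Sigma^0_2$ commitments with the demand that $T$ be a fixed computable tree whose leftover isolated paths behave correctly is a genuine priority/approximation argument --- one in which a provisional spine may be revealed to be a side branch (or vice versa) at a later stage and the tree must absorb that injury without ever letting the side branches become effectively listable. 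You explicitly defer this (``the main technical content''), but without it the proposal is an outline of where the proof must go, not a proof; in particular nothing in the sketch shows that the diagonalization, the $\Sigma^0_2$ totality/partiality forcing, and the rank-$1$ clustering requirement can all be satisfied by one computable tree.
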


\begin{theo} If $ A $ is of rank one and is of $ \bf{0} $-dominated
degree then it is of minimal degree.
 \end{theo}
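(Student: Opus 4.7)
My plan is to combine Theorem \ref{CS} (Cenzer--Smith) and Theorem \ref{Ow} (Owings) with the standard observation that on a $\bf{0}$-dominated degree every Turing reduction is already a truth-table reduction: given $B = \Psi(A)$ with $A$ hyperimmune-free, the (total) use function of $\Psi$ is dominated by a computable function, which turns the reduction into a tt-reduction. First I would note that $A$ itself is automatically incomputable, because any computable set is the unique element of a singleton $\Pi^0_1$ class and so has rank $0$, whereas by hypothesis $rk(A)=1$.

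So let $B \leq_T A$ be arbitrary; the goal is to show $B$ is computable or $A \leq_T B$. By the $\bf{0}$-dominated hypothesis I get $B \leq_{tt} A$, and also $A \oplus B \leq_T A$ and hence $A \oplus B \leq_{tt} A$. Applying Cenzer--Smith to each of these tt-reductions (using $rk(A)=1$) gives $rk(B) \leq 1$ and $rk(A \oplus B) \leq 1$. Conversely the projection $A\oplus B \mapsto B$ is a trivial tt-reduction, so Cenzer--Smith also yields $rk(B) \leq rk(A \oplus B)$.

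Next I would split into cases on $rk(B)$. If $rk(B)=0$, then $B$ is isolated in some $\Pi^0_1$ class, so $B$ is the unique path through the restriction of that class to extensions of some appropriate $\sigma \subset B$; compactness of the bounded tree then forces $B$ to be computable. Otherwise $rk(B)=1$, and the inequalities from the previous paragraph collapse to give $rk(A \oplus B) = 1 = rk(B)$, at which point Owings' theorem \ref{Ow} delivers $A \leq_T B$. Either way every $B \leq_T A$ is computable or computes $A$, which is exactly minimality.

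The only real conceptual step is recognising that $\bf{0}$-dominatedness is precisely what is needed to upgrade each Turing reduction here to a truth-table reduction, so that the rank-preservation theorem of Cenzer and Smith can be invoked at all; once this observation is in place the rest is just a short chain of applications of the two cited theorems, and I do not anticipate any further technical obstacle.
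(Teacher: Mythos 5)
Your proposal is correct and takes essentially the same approach as the paper's proof: upgrade Turing reductions below $A$ to truth-table reductions using the $\bf{0}$-dominated hypothesis, apply Theorem \ref{CS} to pin the ranks of $B$ and $A\oplus B$ at $\leq 1$, dispose of the rank-$0$ case as computability, and invoke Theorem \ref{Ow} in the rank-$1$ case to get $A\leq_T B$. The only difference is cosmetic—the paper phrases it as a contradiction from an assumed incomputable $B<_T A$, while you argue directly for arbitrary $B\leq_T A$.
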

  \begin{proof}
Suppose that $ A $ is of $ \bf{0} $-dominated degree, that $ A $ is
of rank 1, and that there exists incomputable $ B<_T A $. Generally
speaking whenever $ C\leq_T D $ and $ D $ is of $ \bf{0} $-dominated
degree we actually have $ C\leq_{tt} D $. By theorem \ref{CS}, then,
$ B $ must be of rank 1 since to be of rank 0 would mean that $ B $
is computable. But then $ A\oplus B $ is also of rank 1 so that by
theorem \ref{Ow} we have that $ A $ is computable in $ B $ which
gives a contradiction.
\end{proof}

\begin{theo} \label{smcrank} If $ A $ is of $ \bf{0} $-dominated degree and is of
rank 1 then the degree of $ A $ is not FPF and therefore has strong
minimal cover.
\end{theo}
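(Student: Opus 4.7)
By Theorem~\ref{favthe}, it suffices to show that the degree of $ A $ is not FPF. I will verify condition~(3) of Theorem~\ref{equiv}: for every $ f\leq_T A $ there is a computable $ h $ such that $ | W_{h(n)} | \leq n $ for all $ n $ and $ f(n)\in W_{h(n)} $ for infinitely many $ n $.

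Fix a $ \Pi_1^0 $ class $ [\Pi] $ and a string $ \sigma\subset A $ witnessing that $ A $ has rank one, so $ A $ is the unique non-isolated point of $ [\Pi]\cap[\sigma] $; without loss of generality take $ \sigma=\lambda $. Two structural facts follow. First, for any $ \rho\in T_{[\Pi]} $ that is not an initial segment of $ A $, the set $ [\Pi]\cap[\rho] $ consists entirely of isolated points of $ [\Pi] $ and, being a compact discrete subset of $ 2^\omega $, is finite; consequently the width $ |T_{[\Pi]}\cap[\rho]\cap 2^k| $ stabilises to some $ K_\rho<\infty $ for all sufficiently large $ k $. Second, for every $ m $ the class $ [\Pi]\cap[A\upharpoonright m] $ contains infinitely many pairwise distinct isolated points, necessarily converging to $ A $, so that $ |T_{[\Pi]}\cap[A\upharpoonright m]\cap 2^k|\to\infty $ as $ k\to\infty $.

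Given $ f=\Psi(A) $ total, the $ \mathbf{0} $-dominated hypothesis supplies a computable $ u $ dominating the use of $ \Psi $ on $ A $, so that $ f(n)=\hat\Psi(A\upharpoonright u(n);n) $ for almost all $ n $. I plan to enumerate into $ W_{h(n)} $ the value $ \hat\Psi_{s(n)}(\rho;n) $ for each $ \rho \in T_{[\Pi]_{s(n)}}\cap 2^{u(n)} $ that at stage $ s(n) $ exhibits a ``thick'' subtree in the sense that the approximation to $ T_{[\Pi]}\cap[\rho] $ already displays width at least $ b(n) $ at level $ s(n) $, where both the stage $ s(n) $ and the threshold $ b(n) $ are computable. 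To calibrate them, consider the $ A $-computable function which, from input $ m $, returns both $ \max\{K_\rho:\rho\text{ off-branch},\ |\rho|\leq m\} $ and a stage by which each such off-branch subtree has stabilised; by $ \mathbf{0} $-domination this function is dominated by a computable one, and I take $ b(n) $ and $ s(n) $ from such a dominator applied to $ u(n) $. Then for almost every $ n $, each off-branch $ \rho $ of length $ u(n) $ is excluded (its stabilised width $ K_\rho $ lies below $ b(n) $), while $ A\upharpoonright u(n) $ is included (its subtree width exceeds $ b(n) $ by stage $ s(n) $), and hence $ f(n)\in W_{h(n)} $. The resulting trace has size bounded by a computable function; the packing argument in the proof of $ (2){\Rightarrow}(3) $ of Theorem~\ref{equiv} then converts it into a trace of size $ \leq n $ catching $ f $ at infinitely many arguments, verifying condition~(3).

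The most delicate step is the simultaneous calibration of $ s(n) $ and $ b(n) $ so that on-branch inclusion and off-branch exclusion both happen, uniformly in $ n $, at the same computable stage. This is precisely where the rank-one hypothesis (which forces the width-growth asymmetry between the unique non-isolated branch $ A $ and the off-branches) combines with $ \mathbf{0} $-domination (which translates the $ A $-computable stabilisation and threshold data into computable bounds); neither hypothesis alone would suffice.
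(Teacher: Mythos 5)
The basic intuition is aligned with the paper's — exploit the width asymmetry between the unique non-isolated branch and the off-branch strings — but the calibration step contains a genuine gap. You invoke $\mathbf{0}$-domination on the ``$A$-computable function which, from input $m$, returns both $\max\{K_\rho : \rho\ \text{off-branch},\ |\rho|\leq m\}$ and a stage by which each such off-branch subtree has stabilised,'' but there is no justification that this function is $A$-computable, and generally it is not. Knowing that $\rho\not\subset A$ does not let $A$ determine $K_\rho=|[\Pi]\cap[\rho]|$: that count is a $\Delta^0_2$ quantity (e.g.\ emptiness of the class $[\Pi]\cap[\rho]$ is already $\Sigma^0_1$-complete), and an $A$ of $\mathbf{0}$-dominated degree will not in general compute it, nor the stabilisation stages of the pruned tree. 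Without $A$-computability there is nothing for $\mathbf{0}$-domination to act on, and the whole calibration collapses. There is also a secondary circularity: you measure subtree width at stage $s(n)$ and level $s(n)$ simultaneously, but for the approximation $T_{[\Pi]_s}$ to have pruned correctly at level $s$ you would need $s$ to exceed a settling time that itself depends on the level, while the on-branch inclusion (``width exceeds $b(n)$ by stage $s(n)$'') is asserted but never calibrated against $b(n)$ and $s(n)$ at all.

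The paper avoids both problems by not trying to compute off-branch widths. For each $n$ it instead runs an unbounded \emph{computable} search over triples $(\Lambda,n',n'')$, where $\Lambda$ omits exactly one string from $\Pi(n)$, for a finite combinatorial witness (``fewer than $n'$ strings in $\Pi(g(n'))$ above $\Lambda$ survive to level $n''$'') whose existence is guaranteed by rank-one-ness. The search is effective because the condition is decidable from the computable tree $\Pi$ alone. The cost of guessing $\Lambda$ wrong (i.e.\ the omitted string $\tau_n$ might lie on $A$) is absorbed by the dichotomy: if $\tau_n\subset A$ for almost all $n$ then $A$ itself is computable, and otherwise the trace catches $f$ infinitely often. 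That search-plus-dichotomy structure is the key idea you are missing; replacing your appeal to $A$-computability of $K_\rho$ and the stabilisation stages with such a search would repair the argument.
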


\begin{proof} We suppose we are given $ A $ which satisfies the
hypothesis of the theorem and we show that this set satisfies (3) of
theorem \ref{equiv}. So let $ [\Pi]  $ be a $ \Pi^0_1 $ class such
that $ A $ is the unique non-isolated point of $ [\Pi] $. Given $
f=\Psi(A) $ we may take computable $ g $ which majorizes the use
function for $ \Psi $ with oracle $ A $. For every $ n $ we consider
$ \Pi(n) $, the strings in $ \Pi $ of length $ n $. Suppose there
are $ m $ strings in this set. Since only one of these strings is an
initial segment of a non-isolated point of $ [\Pi] $ there exists
some $ \Lambda \subset \Pi(n) $ of size $ m-1$ and some large $
n',n'' $ with $ n''>g(n') $, such that there do not exist $ n' $
strings in $ \Pi(g(n')) $ extending a string in $ \Lambda $ and
which have an extension in $ \Pi(n'') $. In fact we can effectively
find such $ \Lambda, n',n'' $ so that for each $ n $, having found
such values, we can define $ W_{h(n')} $ to be the set of values $
\Psi(\tau;n') $ for those $ \tau $ in $ \Pi(g(n')) $ extending a
string in $ \Lambda $ which have an extension in $ \Pi(n'') $ and we
can enumerate also the string, $ \tau_n $ say, which is the unique
element of $ \Pi(n)-\Lambda $. For each $ n $ (considered in turn)
we can insist that $ n' $ is larger than any number previously
mentioned during the construction and then define $ W_{h(k)} $, for
all $ k<n' $, to be the empty set unless this value is already
defined. If it is the case that for almost all $ n $, $ \tau_n
\subset A $ then $ A $ is computable and otherwise we have that for
infinitely many $ n $, $ f(n)\in W_{h(n)} $.
\end{proof}

\section{Constructing a strong minimal cover}

In this section we shall discuss a straightforward approach to be
taken in attempting to construct a strong minimal cover for any
given degree. In so doing lemma \ref{trelem1} will be useful.

\begin{defin}
We shall say that $ \tau $ is $ \textbf{A} \oplus
$-\textbf{compatible} if, for all $ n $ such that $
\tau(2n)\downarrow $, we have $ \tau(2n)=A(n) $.
\end{defin}

\begin{lem} \label{trelem1} Suppose $ \Psi =\hat \Psi $.
If $ T_0  $ is an $ A $-computable 2-branching $ \Psi $-splitting
tree, then $  T_1= \{ \Psi(\tau):\tau\in T_0 \} $
 is an $ A $-computable 2-branching tree.
Let $ T_2 $ be an $ A $-computable 2-branching subtree of $ T_1 $.
Then $ T_3= \{ \tau \in T_0: \Psi(\tau)\in T_2 \} $
 is an $ A $-computable 2-branching subtree of $ T_0 $.
\end{lem}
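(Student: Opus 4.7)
The plan is to show first that $\Psi|_{T_0}$ is a strictly order-preserving bijection onto $T_1$, and then to transport tree structures across this bijection. Suppose $\tau \subsetneq \tau'$ in $T_0$. The path from $\tau$ to $\tau'$ passes through one of the two $T_0$-successors $\tau_0, \tau_1$ of $\tau$, say $\tau_i \subseteq \tau'$. Since $\tau_0$ and $\tau_1$ are incompatible elements of $T_0$, the $\Psi$-splitting property forces $\Psi(\tau_0)$ and $\Psi(\tau_1)$ to be incompatible; combined with $\Psi(\tau) \subseteq \Psi(\tau_0), \Psi(\tau_1)$ (using the monotonicity of $\hat{\Psi}$), both $\Psi(\tau_0)$ and $\Psi(\tau_1)$ must \emph{properly} extend $\Psi(\tau)$, since otherwise they would be comparable to one another. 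Hence $\Psi(\tau) \subsetneq \Psi(\tau_i) \subseteq \Psi(\tau')$. Together with the $\Psi$-splitting clause applied directly to incompatible $\tau, \tau' \in T_0$, this shows $\Psi|_{T_0}$ is an injective, strictly order-preserving bijection from $T_0$ onto $T_1$. This order-isomorphism is the crux of the whole argument; once it is in hand, every remaining claim follows by transporting the relevant trees through it.

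It follows immediately that each $\rho = \Psi(\tau) \in T_1$ has precisely two successors in $T_1$, namely the images $\Psi(\tau_0), \Psi(\tau_1)$ of the $T_0$-successors of $\tau$: any hypothetical intermediate $\Psi(\tau'')$ with $\Psi(\tau) \subsetneq \Psi(\tau'') \subsetneq \Psi(\tau_i)$ would pull back to $\tau \subsetneq \tau'' \subsetneq \tau_i$ in $T_0$, contradicting $\tau_i$ being a $T_0$-successor of $\tau$. To compute $T_1$ from $A$, I would decide $\rho \in T_1$ by descending $T_1$ from its root (the $\Psi$-image of the root of $T_0$): at each $T_1$-node found, obtain its two successors by computing $\Psi$ on the two $T_0$-successors of the corresponding $\tau$ (both found using the $A$-oracle for $T_0$), and follow the unique successor that is a prefix of $\rho$; answer ``yes'' if equality is reached and ``no'' if neither successor is a prefix of $\rho$. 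Since string lengths strictly increase along the descent and $|\rho|$ is fixed, the procedure terminates.

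For the second part, $T_3$ is directly $A$-computable: membership of $\tau$ in $T_3$ is decided by checking $\tau \in T_0$, computing $\Psi(\tau)$ (which halts since $\Psi = \hat{\Psi}$), and checking $\Psi(\tau) \in T_2$, each with the $A$-oracle. The 2-branching property follows because the order-isomorphism $\Psi|_{T_0}$ carries $T_3$ bijectively onto $T_2$ and preserves the successor relation in both directions: any $\tau'' \in T_3$ strictly between $\tau$ and $\Psi^{-1}(\rho_i)$ (where $\rho_i$ is a $T_2$-successor of $\rho = \Psi(\tau)$) would give $\Psi(\tau'') \in T_2$ strictly between $\rho$ and $\rho_i$, contradicting $\rho_i$ being a $T_2$-successor of $\rho$. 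Hence $T_3$ is an $A$-computable 2-branching subtree of $T_0$, completing the proof.
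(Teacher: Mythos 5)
The paper explicitly leaves this lemma's proof to the reader, so there is no paper argument to compare against. Your proof is correct and supplies the natural argument: the key observation that $\Psi$ restricted to $T_0$ is a strict order-isomorphism onto $T_1$ --- injectivity and order-reflection coming from the $\Psi$-splitting condition, strict monotonicity from the monotonicity built into $\hat\Psi$ together with the incompatibility of the images of the two successors of any node --- is exactly what is needed, and the $2$-branching and $A$-computability of both $T_1$ and $T_3$ then follow by transporting tree structure through that isomorphism as you describe.
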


\begin{proof} The proof is not difficult and is left to the reader.
\end{proof}

So now let us suppose that we wish to construct a strong minimal
cover for $ deg(A) $. In order to do so we must construct $ B \geq_T
A $ and satisfy all requirements;

\vspace{4pt}

$ \mathcal{R}_{i}: \Psi_i(B) \ \mbox{total} \ \rightarrow (\Psi_i(B)
\leq_T A \ \ \mbox{or} \ \ B \leq_T \Psi_i(B)) $

$ \mathcal{P}_{i}: B \neq \Psi_i(A) $

\vspace{4pt} In order to ensure that $ B \geq_T A $ we can simply
insist that $ B $ should be an $ A\oplus $-compatible string. Thus
we begin with the restriction that $ B $ should lie on the tree $
T_0 $ containing all strings of even length which are $ A \oplus
$-compatible, an $ A $-computable 2-branching
 tree.

In order to meet all other requirements we might try to proceed by
finite extension. We define $ B_0 $ to be the empty string. Suppose
that by the end of stage $ s $ we have defined $ B_s \in 2^{<\omega
} $ on $ T_s $,
 an $ A $-computable 2-branching tree, in such a way that if $ B $ extends $ B_s $ and lies on $ T_s $ then all
  requirements $ \mathcal{R}_i ,\mathcal{P}_i $ for $i<s $ will be satisfied.
At stage $ s+1 $ we might proceed, initially, just as if we were
only trying to construct a minimal cover for $ A $. We can assume
that $ \Psi_s = \hat \Psi_s $ (otherwise replace $ \Psi_s $ with $
\hat \Psi_s $ in what follows). We ask the question, ``does there
exist $ \tau \supseteq B_s $ on $ T_s $ such that no two strings on
$ T_s $ extending $ \tau $ are a $ \Psi_s $-splitting?".

\vspace{4pt}
 \textbf{If so}: then let $ \tau $ be such a string. We can define $ T_{s+1}=T_s $ and
 (just to make the satisfaction of $ \mathcal{P}_s $ explicit)
define $ B_{s+1} $ to be some extension of $ \tau $ on $ T_s $
sufficient to ensure $ \mathcal{P}_s $ is satisfied.

\vspace{4pt}
 \textbf{If not}: then we can define $ T_s' $ to be an $ A $-computable 2-branching $ \Psi_s $-splitting
  subtree of $ T_s $ having $ B_s $ as least element---the idea being that
we shall eventually define $ T_{s+1} $ to be some subtree of $ T_s'
$. If $ B $ lies on $ T_s' $ then we shall
 have that $ B \leq_T \Psi_s(B) \oplus A $. Of course this does not suffice, since
for the satisfaction of $ \mathcal{R}_s $ we require that $ B \leq_T
\Psi_s(B) $. Suppose, however, that we know
 $ A $ satisfies the property;

\vspace{4pt} \noindent $ (\dagger) $ If $ T $ is an $ A $-computable
2-branching tree, then there exists $ T' $ a subtree of $ T $ which
is also an
  $ A $-computable 2-branching tree and which satisfies the property that if $ C \in [T'] $ then $ A \leq_T C $.

\vspace{4pt} Lemma \ref{trelem1} then suffices to ensure that we can
define $ T_{s+1} $ to be a subtree of $ T_s' $
 which is an $ A $-computable 2-branching tree,
and which satisfies the property that if $ B \in [T_{s+1}] $ then $
A \leq_T \Psi_s(B) $ so that, since $ B \leq_T \Psi_s(B) \oplus A $,
$B \leq_T \Psi_s(B) $. Then we can define $ B_{s+1} $ to be an
extension of $ B_s $ lying on $ T_{s+1} $ sufficient to ensure the
satisfaction of $ \mathcal{P}_s $.

In conclusion, then, if $ A $ satisfies the property $ (\dagger) $
we can construct a strong minimal cover for $ deg(A) $.

\section{The proof of theorem \ref{favthe}}

The remarks of the last section suffice to show that in order to
prove theorem \ref{favthe} we need only prove that if $ A $ is of $
\bf{0} $-dominated degree which is not FPF then $ A $ satisfies $
(\dagger) $. So let us now assume that $ A $ is such a set and that
we are given a Turing functional $ \Phi $ which satisfies the
property that for all $ \sigma $, $ \Phi(A;\sigma)\downarrow \in \{
0,1 \} $ and $ \Phi(A;\sigma)=1 $ iff $ \sigma \in T $, where $ T $
 is  $ A $-computable and 2-branching. We can assume that $ T $
has a single element of level 0 which is the empty string and  that
$ \Phi(\tau;\sigma)\downarrow  $ only if the computation converges
in $ \leq \vline \tau \vline $ steps and $
\Phi(\tau;\sigma')\downarrow $ for all $ \sigma' $ such that $
\vline \sigma' \vline <\vline \sigma \vline $.

\begin{defin}
For all $ \tau $ we define  $ T(\tau) =\{ \sigma :
\Phi(\tau;\sigma)\downarrow =1 \} $.
\end{defin}

It will be convenient, also, to assume that for any $ \tau $ and $
\sigma \in T(\tau) $, $ \sigma $ has at most two successors in $
T(\tau) $ and that any string of level 0 in $ T(\tau) $ must be $
\lambda $.

Consider now the $ A $-computable function $g$ defined as follows;
for every $ n $, $ g(n) $ is the greatest value $ \vline \sigma
\vline $ such that $ \sigma $ is of level $ n $ in $ T $. Since  $ A
$ is of $ \bf{0} $-dominated degree we can take computable and
increasing $ f $ which majorizes $ g $.

\begin{defin}
We denote $ \Omega(\tau,n) $ iff $ n=0 $ or:
\begin{itemize}
\item  $ T(\tau) $ is of level at least $ n $ and is 2-branching below level $ n $, and

\item for every $ n'\leq n $, the greatest value $ \vline \sigma \vline $ such that
$ \sigma $ is of level $ n' $ in $ T(\tau) $ is $ \leq f(n') $.
\end{itemize}
\end{defin}

\subsection{Defining $ \Pi $}
We define $ \Pi $ by enumeration in stages. Initially $ \Pi $
contains only the empty string.
 At stage $ s>0 $ we consider all strings $ \tau $ of length $ s $ and for each such string we proceed as follows.
Let $ n $ be the greatest such that $ \Omega(\tau',n) $ for $ \tau' \subset \tau $ such that $ \tau' \in \Pi $.
 If it is the case that $ \Omega(\tau,n') $ for some $ n'\geq n+2 $
and all strings in $ T(\tau) $ of level $ n' $ are of length at
least $ f(n+2) $ then enumerate $ \tau $ into $ \Pi $.

\subsection{Using  $ \Pi $-thin $ \Pi' $. } Let $ \Pi' $ be $ \Pi $-thin and
let $ \Lambda = \{ \tau_i: 1\leq i \leq k \} $ be a prefix-free set
of strings in $ \Pi' $  such that each $ \tau_i $ extends $ \tau\in
\Pi' $.  For each $ i $ let $ n_{\tau_i} $ be the greatest $ n $
such that $ \Omega(\tau_i,n) $ and let $ n_{\tau} $ be the greatest
$ n $ such that $ \Omega(\tau,n) $. We show that if $ \sigma $ is
any string in $ T(\tau) $  of level $ n_{\tau} $ then we can choose
two strings extending $ \sigma $ of level $ n_{\tau_i } $ from each
$ T(\tau_i) $, $ \sigma_{i,0} $ and $ \sigma_{i,1} $ say, so that if
$ i\neq i' $ or $ j \neq j' $ then $ \sigma_{i,j} \vline
\sigma_{i',j'} $.

Define $ m_0 = \vline \tau \vline_{\Pi} $. For each $ m\geq 1 $ let
$ \Lambda_{m} $ be the set of $ \tau_{i} $ which are of level $ \leq
m_0+m $ in $ \Pi $ and let $ \Lambda^{\star}_{m} $ be the set of $
\tau_{i} $ which are of level $  m_0+m $ in $ \Pi $. Defining $ r_m
= \Sigma_{m'= 1}^{m} 2^{-m'} \vline \Lambda^{\star}_{m'} \vline $ we
have that $ r_m \leq 1 $. We show by induction on $ m $ that we can
choose two strings $ \sigma_{i,0} $ and $ \sigma_{i,1} $ extending $
\sigma $ of level $ n_{\tau_i } $ from each $ T(\tau_i) $ such that
$ \tau_i \in \Lambda_m $ and $ (1-r_m) 2^{m+1}  $ different strings
$ \psi_{i,j} $ extending $ \sigma $ of level $ n_{\tau}+2m $ from
each $ T(\tau_i) $ for $ \tau_i \in \Lambda - \Lambda_m $ in such a
way that (where these values are defined):

\begin{itemize}
\item if $ i \neq i' $ or $ j\neq j' $ then $ \sigma_{i,j} \vline \sigma_{i',j'} $, and

\item for any $ i,j,i',j' $ we have $ \sigma_{i,j} \vline \psi_{i',j'} $.
\end{itemize}

\noindent Case $ m=1 $. If $ \vline \Lambda^{\star}_1 \vline =0 $
then the result is clear, so suppose first that $ \vline
\Lambda^{\star}_1 \vline =1 $ and (simply for the sake of simplicity
of labeling) let us suppose that $ \tau_1 \in \Lambda^{\star}_1 $.
We choose any two different strings from $ T(\tau_1) $ of level
$n_{\tau_1} $ extending $ \sigma $ and define these to be $
\sigma_{1,0} $ and $ \sigma_{1,1} $. Observe that every string in $
T(\tau_1) $ of level $n_{\tau_1} $ is of length $ \geq f(n_{\tau}+2)
$. Now consider those $ \tau_i \in \Lambda- \Lambda^{\star}_1 $.
Since every string in  $ T(\tau_i) $ of level $ n_{\tau}+2 $ is of
length $ \leq f(n_{\tau}+2) $ there are at most two strings in $
T(\tau_i) $ of level $ n_{\tau}+2 $ which are compatible with either
$ \sigma_{1,0} $ or $ \sigma_{1,1} $.
 We can therefore define $ \psi_{i,0} $  and $ \psi_{i,1} $ as required.

Suppose $ \vline \Lambda^{\star}_1 \vline =2 $ and  $ \tau_1,\tau_2
\in \Lambda^{\star}_1 $. First we choose any two different strings
from $ T(\tau_1) $ of level $n_{\tau_1} $ extending $ \sigma $ and
define these to be $ \sigma_{1,0} $ and $ \sigma_{1,1} $. Since
every string in  $ T(\tau_2) $ of level $ n_{\tau}+2 $ is of length
$ \leq f(n_{\tau}+2) $ there are at most two strings in $ T(\tau_2)
$ of level $ n_{\tau}+2 $ which are compatible with either $
\sigma_{1,0} $ or $ \sigma_{1,1} $. We can therefore define $
\sigma_{2,0} $  and $ \sigma_{2,1} $ as required.

\begin{pgfpicture}{0mm}{-20mm}{114mm}{-79mm}

\color[rgb]{0,0,0}\pgfsetlinewidth{0.254mm}\pgfline{\pgfpoint{45.72mm}{-55.88mm}}{\pgfpoint{66.04mm}{-41.91mm}} 
        \color[rgb]{0,0,0}\pgfsetlinewidth{0.254mm}\pgfline{\pgfpoint{53.34mm}{-64.77mm}}{\pgfpoint{90.17mm}{-46.99mm}} 

        \color[rgb]{0,0,0}\pgfsetlinewidth{0.254mm}\pgfline{\pgfpoint{60.96mm}{-72.39mm}}{\pgfpoint{19.05mm}{-27.94mm}} 

        \color[rgb]{0,0,0}\pgfsetlinewidth{0.254mm}\pgfline{\pgfpoint{90.17mm}{-46.99mm}}{\pgfpoint{100.33mm}{-26.67mm}} 
        \color[rgb]{0,0,0}\pgfputat{\pgfpoint{0mm}{-34.29mm}}{\pgfbox[left,center]{\shortstack{$f(n_{\tau}+2)$}}} 

        \color[rgb]{0,0,0}\pgfputat{\pgfpoint{52.07mm}{-77.47mm}}{\pgfbox[left,center]{\shortstack{Figure 1}}} 
        \color[rgb]{0,0,0}\pgfsetlinewidth{0.254mm}\pgfline{\pgfpoint{36.83mm}{-46.99mm}}{\pgfpoint{43.18mm}{-26.67mm}} 
        \color[rgb]{0,0,0}\pgfsetlinewidth{0.254mm}\pgfline{\pgfpoint{66.04mm}{-41.91mm}}{\pgfpoint{68.58mm}{-21.59mm}} 
        \color[gray]{0.7}\pgfellipse[fill]{\pgfpoint{29.21mm}{-39.37mm}}{\pgfpoint{1.27mm}{0mm}}{\pgfpoint{0mm}{1.27mm}}\color[rgb]{0,0,0}\pgfsetlinewidth{0.254mm}\pgfellipse[stroke]{\pgfpoint{29.21mm}{-39.37mm}}{\pgfpoint{1.27mm}{0mm}}{\pgfpoint{0mm}{1.27mm}} 
        \color[gray]{0.7}\pgfellipse[fill]{\pgfpoint{39.37mm}{-38.1mm}}{\pgfpoint{1.27mm}{0mm}}{\pgfpoint{0mm}{1.27mm}}\color[rgb]{0,0,0}\pgfsetlinewidth{0.254mm}\pgfellipse[stroke]{\pgfpoint{39.37mm}{-38.1mm}}{\pgfpoint{1.27mm}{0mm}}{\pgfpoint{0mm}{1.27mm}} 
        \color[gray]{0.7}\pgfellipse[fill]{\pgfpoint{66.04mm}{-41.91mm}}{\pgfpoint{1.27mm}{0mm}}{\pgfpoint{0mm}{1.27mm}}\color[rgb]{0,0,0}\pgfsetlinewidth{0.254mm}\pgfellipse[stroke]{\pgfpoint{66.04mm}{-41.91mm}}{\pgfpoint{1.27mm}{0mm}}{\pgfpoint{0mm}{1.27mm}} 
        \color[gray]{0.7}\pgfellipse[fill]{\pgfpoint{90.17mm}{-46.99mm}}{\pgfpoint{1.27mm}{0mm}}{\pgfpoint{0mm}{1.27mm}}\color[rgb]{0,0,0}\pgfsetlinewidth{0.254mm}\pgfellipse[stroke]{\pgfpoint{90.17mm}{-46.99mm}}{\pgfpoint{1.27mm}{0mm}}{\pgfpoint{0mm}{1.27mm}} 
        \color[rgb]{0,0,0}\pgfputat{\pgfpoint{16.51mm}{-26.4mm}}{\pgfbox[left,center]{\shortstack{$\sigma_{1,0}$}}} 
        \color[rgb]{0,0,0}\pgfputat{\pgfpoint{39.37mm}{-25.4mm}}{\pgfbox[left,center]{\shortstack{$\sigma_{1,1}$}}} 
        \color[rgb]{0,0,0}\pgfputat{\pgfpoint{64.77mm}{-20.59mm}}{\pgfbox[left,center]{\shortstack{$\sigma_{2,0}$}}} 
        \color[rgb]{0,0,0}\pgfputat{\pgfpoint{96.25mm}{-25.4mm}}{\pgfbox[left,center]{\shortstack{$\sigma_{2,1}$}}} 
        \color[rgb]{0,0,0}\pgfputat{\pgfpoint{61.23mm}{-70.12mm}}{\pgfbox[left,center]{\shortstack{$\sigma$}}} 

\color[rgb]{0,0,0}\pgfsetlinewidth{0.254mm}\pgfsetdash{{1pt}{5pt}{1pt}{5pt}}{0mm}\pgfline{\pgfpoint{113.03mm}{-72.39mm}}{\pgfpoint{17.78mm}{-72.39mm}} 
\color[rgb]{0,0,0}\pgfsetlinewidth{0.254mm}\pgfsetdash{{1pt}{5pt}{1pt}{5pt}}{0mm}\pgfline{\pgfpoint{17.78mm}{-34.29mm}}{\pgfpoint{113.03mm}{-34.29mm}} 

        \color[rgb]{0,0,0} 
\end{pgfpicture}

\newpage

\noindent The diagram illustrates what happens in the case that $
\vline \Lambda^{\star}_1 \vline =2 $. First we pick $\sigma_{1,0}$
and $\sigma_{1,1}$. These are strings  from $ T(\tau_1) $ of level
$n_{\tau_1} $ extending $ \sigma $, and are therefore of length
$\geq f(n_{\tau}+2)$. The coloured circles indicate what the strings
extending $\sigma$ and of level $n_{\tau}+2$ may look like in
$T(\tau_2)$. These strings are of length $\leq f(n_{\tau}+2)$ and
therefore at most two of them are compatible with either of the
strings $ \sigma_{1,0}$, $\sigma_{1,1}$. We may therefore choose $
\sigma_{2,0}$ and $ \sigma_{2,1}$ of level $n_{\tau_2}$ in
$T(\tau_2)$, which are incompatible with $ \sigma_{1,0}$ and
$\sigma_{1,1}$. These strings will be of length $ \geq
f(n_{\tau}+2)$.

\vspace{4pt}
\noindent Case $ m>1 $. By the induction hypothesis we can choose  two strings $ \sigma_{i,0} $ and $ \sigma_{i,1} $
 extending $ \sigma $
of level $ n_{\tau_i } $ from each $ T(\tau_i) $ such that $ \tau_i
\in \Lambda_{m-1} $ and $ (1-r_{m-1}) 2^{m}  $ different strings $
\psi_{i,j} $ extending $ \sigma $ of level $ n_{\tau}+2(m-1) $ from
each $ T(\tau_i) $ for $ \tau_i \in \Lambda - \Lambda_{m-1} $ in
such a way that if $ i \neq i' $ or $ j\neq j' $ then $ \sigma_{i,j}
\vline \sigma_{i',j'} $ and for any $ i,j,i',j' $ we have $
\sigma_{i,j} \vline \psi_{i',j'} $. For each $ \tau_i \in \Lambda-
\Lambda_{m-1} $ take the four extensions of each $ \psi_{i,j} $ of
level $ n_{\tau}+2m $ in $ T(\tau_i) $ and relabel so that these are
the strings $ \psi_{i,j} $. There are at most $ (1-r_{m-1}) 2^{m}  $
strings in $ \Lambda^{\star}_{m} $. We proceed first by defining in
turn the strings $ \sigma_{i,j} $ such that $ \tau_i \in
\Lambda^{\star}_{m} $, from amongst the extensions of the strings $
\psi_{i,j'} $. Whenever we define such $ \sigma_{i,j} $ it is of
length $ \geq f(n_{\tau}+2m) $ and it is therefore the case that for
each $ \tau_{i'} \in \Lambda-\Lambda_{m-1} $ there is at most one
string $ \psi_{i',j'} $ which is compatible with $ \sigma_{i,j} $.
Since we must choose at most  $ (1-r_{m-1}) 2^{m+1} $ strings $
\sigma_{i,j} $ and each $ \tau_{i'} \in \Lambda-\Lambda_{m-1} $ has
$ (1-r_{m-1}) 2^{m+2}  $ incompatible $ \psi_{i',j'} $ we can define
all the $ \sigma_{i,j} $ as required. This
 leaves each $ \tau_i \in \Lambda - \Lambda_m $ with at least
$ (1-r_{m-1}) 2^{m+1} \geq (1-r_{m}) 2^{m+1} $ strings $ \psi_{i,j} $
 which are incompatible with any $ \sigma_{i',j'} $.

\subsection{Defining $ T' $.}
Let c.e. $ \Pi' $ be $ \Pi $-thin with $ A\in [\Pi'] $. Since $ A $
is of $ \bf{0} $-dominated degree we can let $ \Pi^{\star } $ be a
subset of $ \Pi' $ such that:

\begin{itemize}
\item  $ A \in [\Pi^{\star }] $, and $ \Pi^{\star } $ has as element of level 0 the empty string $ \lambda$,

\item  each $ \tau \in \Pi^{\star } $ has a finite number of
successors, and

\item  there is a computable function which given any $ \tau $
such that $\tau \in \Pi^{\star } $ returns $ m $ such that $ D_m $
(the $ m^{th} $ finite set according to some fixed effective
listing) codes the successors of $ \tau $ in $ \Pi^{\star } $.
\end{itemize}

 Given a computable enumeration $ \{
\Pi^{\star }_{s} \}_{s\geq 0} $
 satisfying
 \begin{enumerate}
 \item  $ \Pi^{\star}_0= \{ \lambda \} $,

 \item  if $ \tau \in \Pi^{\star}_{s+1}-\Pi^{\star}_s $
 then $ \tau $ extends a leaf of $ \Pi^{\star}_s $, and

 \item  if
 $ \tau,\tau' \in \Pi^{\star}_{s+1}-\Pi^{\star}_s $ then these strings are incompatible,

 \end{enumerate}

 \noindent we proceed in an effective fashion to enumerate values
  $ T'(\tau) $  and axioms for $ \Theta $ such that
$ T' = \bigcup \{ T'(\tau): T'(\tau)\downarrow, \tau \subset A \} $
is an $ A $-computable 2-branching subtree of $ T $, and for all $
C\in [T'] $ we have $ \Theta(C)=A $. This suffices, then, to show
that $ A $ satisfies $ (\dagger) $, as required.

\vspace{4pt} \noindent \textbf{Stage 0}. We define $ T'(\lambda) =
\{ \lambda \} $.

\vspace{4pt} \noindent \textbf{Stage s+1}. We can assume that
strings are enumerated into $ \Pi^{\star }_{s+1} $ extending
precisely one leaf of $ \Pi^{\star }_{s} $, $ \tau $ say, which is a
string of level $ m $ (say) in $\Pi^{\star} $.  Let the strings
enumerated into $ \Pi^{\star }_{s+1} $ extending $ \tau $ be $
\tau_1,..,\tau_{k} $.
 For each $ i $ let $ n_{\tau_i} $
be the greatest $ n $ such that $ \Omega(\tau_i,n) $ and let $
n_{\tau} $ be the greatest $ n $ such that $ \Omega(\tau,n) $. We
will have already defined the value $ T'(\tau) $ to be a tree of
level $ m $, which is 2-branching below level $ m $, and with each
leaf a string of level $ n_{\tau} $ in $ T(\tau) $.

Now we must define each $ T'(\tau_i) $ to be a tree of level $ m+1 $
which is 2-branching below level $ m+1 $,
 with $ T'(\tau) $
 as subtree and with two leaves extending each leaf $ \sigma $ of
$ T'(\tau) $, with each leaf a string of level $ n_{\tau_i} $ in $
T(\tau_i) $, and such that for $ i \neq i' $ any leaf of $
T'(\tau_i) $ is incompatible with any leaf of $ T'(\tau_{i'}) $.
Thus for each leaf $ \sigma $ of $ T'(\tau) $ we must choose for
each $ \tau_i $ two extensions $ \sigma_{i,0} $ and $ \sigma_{i,1} $
of level $ n_{\tau_i} $ in $ T(\tau_i) $ in such a way that $
\sigma_{i,j} \vline \sigma_{i',j'} $ if either $ i\neq i' $ or $ j
\neq j' $. The observation of section 6.2 says precisely that this
is possible.
 Since these strings are pairwise incompatible
we can then consistently define $ \Theta(\sigma')= \tau_i $, for
each $\sigma'$ which we have just defined as a leaf of $ T'(\tau_i)
$.

\bibliographystyle{alpha}

\begin{thebibliography}{AL1}

\bibitem[CS]{CS}
D. CENZER and R. SMITH, `The ranked points of a $ \Pi^0_1 $ set',
\emph{J. Symbolic Logic}, 54, (1989) 975-991.

\bibitem[CC]{CC}
C.T. CHONG, `Generic sets and minimal $ \alpha $-degrees',
\emph{Trans.Am.Math.Soc}, 254, (1979) 157-169.




\bibitem[CD1]{CD1}
C.T. CHONG and R. DOWNEY, `Minimal degrees recursive in 1-generic
degrees', \emph{Annals of Pure and Applied Logic}, 48, (1990)
215-225.

\bibitem[CD2]{CD2}
C.T. CHONG and R. DOWNEY, `Degrees bounding minimal degrees below $
\bf{0}' $', \emph{Proc. Cambridge Phil. Soc.}, 105, (1989) 211-222.





\bibitem[BC2]{BC2}
S.B. COOPER  {\sl Computability theory}, Chapman \& Hall, (2004).



\bibitem[RD]{RD}
R. DOWNEY, `On $ \Pi^0_1 $ classes and their ranked points',
\emph{Notre Dame J. of Formal Logic}, 32, (1991) 499-512.

\bibitem[DJS]{DJS}
R. DOWNEY, C.G. JOCKUSCH, and M. STOB, `Array nonrecursive degrees
and genericity', \emph{Computability, Enumerability, Unsolvability},
vol 224 of Lon.Math.Soc Lecture Note Series, (1996) 93-104.


\bibitem[DY]{DY}
R. DOWNEY, L. YU,  `Arithmetical Sacks Forcing', to appear in the
\emph{Proc.Am.Math.Soc}.







\bibitem[YG]{YG}
Y. GABAY, `Double jump inversion and strong minimal covers', PhD
thesis, Cornell University (2004).


\bibitem[SI]{SI}
S. ISHMUKHAMETOV, `Weak recursive degrees and a problem of Spector',
\emph{Recursion Theory and Complexity}, Arslanov and Lempp eds., de
Gruyter, (1999) 81-89.



\bibitem[KMS]{KMS}
B. KJOS-HANSSEN, W. MERKLE and F. STEPHAN,  `Kolmogorov Complexity
and the Recursion Theorem', \emph{Proceedings of STACS 2006},
Springer LNCS 3884:149-161.



\bibitem[AK]{AK}
A. KUCERA, `Measure, $ \Pi^0_1 $-classes  and complete extensions of
PA', \emph{Recursion Theory Week} (Oberwolfach, 1984), volume 1141
of Lecture Notes in Math. 245-259, Springer, Berlin.

\bibitem[MK]{MK}
M. KUMABE, `A fixed point free minimal degree', unpublished.

\bibitem[ML]{ML}
M. LERMAN, {\sl Degrees of unsolvability}, (1983) Springer-Verlag.

\bibitem[AL1]{AL1}
A.E.M. LEWIS, `A random degree with strong minimal cover', to appear
in the \emph{Bulletin of the London Mathematical Society}.

\bibitem[AL2]{AL2}
A.E.M. LEWIS, `On a question of Slaman and Groszek', submitted.

\bibitem[PO1]{PO1}
P.G. ODIFREDDI, {\sl Classical Recursion Theory Vol.1}, (1989)
Elsevier.



\bibitem[PO2]{PO2}
P.G. ODIFREDDI, {\sl Classical Recursion Theory Vol.2}, (1999)
Elsevier.

\bibitem[JO]{JO}
J. OWINGS, `Rank, join and Cantor singletons', \emph{Archives of
Mathematical Logic}, 36, (1997) 313-320.









\bibitem[RS]{RS}
R.I. SOARE  {\sl Recursively enumerable sets and degrees}, Springer,
New York (1987).



\bibitem[TZ]{TZ}
S.A. TERWIJN andD. ZAMBELLA, `Computational randomness and lowness',
\emph{J. Symbolic Logic}, 66, (2001) 1199-1205.




\end{thebibliography}

\section*{Addresses}
\alladdress

\end{document}